\documentclass[10pt]{amsart}
\usepackage{amssymb}
\usepackage[english]{babel}
\usepackage{fancyhdr}
\usepackage{colortbl}
\usepackage{enumerate}
\usepackage{graphicx} 
\usepackage{float}
\usepackage{mathrsfs}
\usepackage{cite}
\usepackage{kpfonts}
\usepackage{tcolorbox} 
\usepackage{enumitem}
\usepackage{bbm}
\usepackage{amsmath}
\usepackage{amsfonts}
\usepackage{amsthm} 
\usepackage{latexsym}
\usepackage{mathtools}           
\usepackage[linesnumbered, ruled, vlined]{algorithm2e}
\usepackage[left=2.3cm,right=2.3cm,top=2.65cm,bottom=2.65cm]{geometry}
\usepackage{tikz}
\usetikzlibrary{arrows,automata,shapes,calc,patterns}

\allowdisplaybreaks

\usepackage[colorlinks=true,urlcolor=green!50!black,
citecolor=red!60!black,linkcolor=blue!75!black,linktocpage,pdfpagelabels,
bookmarksnumbered,bookmarksopen]{hyperref}

%este paquete es para que te diga las páginas donde citas las referencias.

%\usepackage[hyperpageref]{backref}

%\hypersetup{
%draft, % Uncomment to remove all links (useful for printing in black and white)
%colorlinks=true, breaklinks=true, bookmarks=true,bookmarksnumbered,
%urlcolor=green!50!black, linkcolor=blue!75!black, citecolor=red!60!black, % Link colors
%pdftitle={}, % PDF title
%pdfauthor={\textcopyright}, % PDF Author
%pdfsubject={}, % PDF Subject
%pdfkeywords={}, % PDF Keywords
%pdfcreator={pdfLaTeX}, % PDF Creator
%pdfproducer={LaTeX with hyperref and ClassicThesis} % PDF producer
%}

\newcommand{\cS}{{\mathcal S}}

% show references to contents
%\usepackage[nottoc,notlot,notlof]{tocbibind}

% make contents can be clicked
%\usepackage[driverfallback=hypertex]{hyperref}
%\usepackage{url} % hyperref works too
%\urlstyle{same}
%\allowdisplaybreaks
%\hypersetup{
%	colorlinks=true,
%	linkcolor=black,
%	citecolor=black,
%	filecolor=black,
%	urlcolor=black,
%}

%\renewcommand{\phi}{\varphi}

\usepackage[capitalize,nameinlink,noabbrev]{cleveref}
% define new theorem
\newtheorem{theorem}[]{Theorem}[section]

\numberwithin{equation}{section}

\newtheorem{definition}[theorem]{Definition}

\newtheorem{lemma}[theorem]{Lemma}

\newtheorem{remark}[theorem]{Remark}

\newtheorem{proposition}[theorem]{Proposition}

\usepackage{mathtools}
\DeclarePairedDelimiter\abs{\lvert}{\rvert}
\DeclarePairedDelimiter\norm{\lVert}{\rVert}
\DeclarePairedDelimiterX{\inner}[2]{\langle}{\rangle}{#1, #2}

\newcommand{\R}{\mathbb{R}}
\newcommand{\C}{\mathbb{C}}
\newcommand{\Rp}{\mathbb{R}_{+}}
\newcommand{\N}{\mathbb{N}}
\newcommand{\Rn}{\mathbb{R}^N}

\newcommand{\intrn}{\int_{\Rn}}
\newcommand{\Lp}[1]{\mathit{L}^{#1}(\Rn)}
\newcommand{\Hs}[1]{{\mathit H}^{#1}(\Rn)}
\newcommand{\Hsr}[1]{{\mathit H}_{r}^{#1}(\Rn)}
\newcommand{\normHs}[2]{\norm{#1}_{\Hs{#2}}}
\newcommand{\normLp}[2]{\norm{#1}_{\Lp{#2}}}

\newcommand{\Ft}{\tilde{F}_{\mu}}

\renewcommand{\(}{\left(}
\renewcommand{\)}{\right)}

\renewcommand{\[}{\left[}
\renewcommand{\]}{\right]}

\newcommand{\diff}{\nabla}
\newcommand{\laplace}{\Delta}

\newcommand{\slc}{\Lambda(c)}

\newcommand{\slp}{\Lambda^{-}(c)}
\newcommand{\slm}{\Lambda^{+}(c)}
\newcommand{\slmr}{\Lambda^{+}_r(c)}
\newcommand{\slpr}{\Lambda^{-}_r(c)}

\newcommand{\spu}{s_u^{-}}
\newcommand{\smu}{s_u^{+}}

\newcommand{\var}{\varepsilon}

\usepackage{etoolbox}

\graphicspath{{Figures/}}
\usepackage{float}
\usepackage[font=small,skip=0pt]{caption}

\begin{document}
	
%\tableofcontents

%\input{Title_Abstract}
%\input{Introduction}
%\input{Preliminary}
%\input{Proofofmainproposition}
%\input{Convergence-PS-sequence}
%\input{Radial-versus-non-radial}
%\input{Strict-inequality}
%\input{Appendix}

%%%%%%%%%%%%%%%%%%%%%%%%%%%\input{Title_Abstract}%%%%%%%%%%%%%%%%%%%%%%%%%%%%%%%%%%%%%%%%%%%%%%%%%%%%%%%%%%%%%%%%%%%%%
\title[Multiple normalized solutions for a Sobolev critical  Schr\"odinger equation]{Multiple normalized solutions \\ for a Sobolev critical  Schr\"odinger equation}

\author[L. Jeanjean and T.T. Le]{Louis Jeanjean and Thanh Trung Le}

\address{
	\vspace{-0.25cm}
	\newline
	\textbf{{\small Louis Jeanjean}} 
	\newline \indent Laboratoire de Math\'{e}matiques (CNRS UMR 6623), Universit\'{e} de Bourgogne Franche-Comt\'{e}, Besan\c{c}on 25030, France}
\email{louis.jeanjean@univ-fcomte.fr}

\address{
	\vspace{-0.25cm}
	\newline
	\textbf{{\small Thanh Trung Le}} 
	\newline \indent Laboratoire de Math\'{e}matiques (CNRS UMR 6623), Universit\'{e} de Bourgogne Franche-Comt\'{e}, Besan\c{c}on 25030, France}
\email{thanh\_trung.le@univ-fcomte.fr}

\date{}
\subjclass[2010]{}
\keywords{}
\maketitle
\begin{abstract} 
	We study the existence of standing waves, of prescribed $L^2$-norm (the {\it mass}), for the nonlinear Schr\"{o}dinger equation with mixed power nonlinearities
	\begin{align*}
	i \partial_t \phi + \Delta \phi + \mu \phi |\phi|^{q-2} + \phi |\phi|^{2^* - 2} = 0, \quad (t, x) \in \R \times \R^N, % \label{NLS0A}
	\end{align*}
	where $N \geq 3$, $\phi: \R \times \R^N \to \C$, $\mu > 0$, $2 < q < 2 + 4/N $ and $2^* = 2N/(N-2)$ is the critical Sobolev exponent. 
	It was proved in \cite{JeanjeanJendrejLeVisciglia2020} that, for small {\it mass}, ground states exist and correspond to local minima of the associated Energy functional. It was also established that despite the nonlinearity is Sobolev critical, the set of ground states is orbitally stable. Here we prove that, when $N \geq 4$, there also exist standing waves which are not ground states and are located at a mountain-pass level of the Energy functional. These solutions are unstable by blow-up in finite time. Our study is motivated by a question raised by N. Soave \cite{Soave2020Sobolevcriticalcase}.
\end{abstract}

\section{Introduction}
In this paper, we study the existence of standing waves of prescribed  $L^2$-norm (the {\it mass}), for the nonlinear Schr\"{o}dinger equation with mixed power nonlinearities
\begin{align}
i \partial_t \phi + \Delta \phi + \mu \phi |\phi|^{q-2} + \phi |\phi|^{2^* - 2} = 0, \quad (t, x) \in \R \times \Rn,  \label{NLS0}
\end{align}
where $N \geq 3$, $\phi: \R \times \Rn \to \C$, $\mu > 0$, $2 < q < 2 + \dfrac{4}{N}$ and $2^* = \dfrac{2N}{N-2}$. \\

We recall that standing waves to \eqref{NLS0} are solutions of the form $\phi(t,x) = e^{-i\lambda t}u(x), \lambda \in \R$. Then the function $u(x)$ satisfies the equation
\begin{align}
-\laplace u - \lambda u - \mu \abs{u}^{q-2} u - \abs{u}^{2^*-2} u = 0 \quad \mbox{in } \Rn. \label{eqn:Laplace}
\end{align}

When looking for solutions to \eqref{eqn:Laplace} a possible choice is to consider $\lambda \in \R$ fixed and to search for solutions as critical points of the action functional, defined on $H^1(\R^N)$ by,
$$\mathcal{A}_{\lambda, \mu}(u) :=  \dfrac{1}{2} \normLp{\diff u}{2}^2 - \dfrac{\lambda}{2} \normLp{u}{2}^2    -
\dfrac{\mu}{q} \normLp{u}{q}^q - \dfrac{1}{2^*}\normLp{u}{2^*}^{2^*}.$$
In this case one usually focuses on the existence and dynamics of minimal action solutions, namely of solutions minimizing $\mathcal{A}_{\lambda, \mu}$ among all non-trivial solutions. In that direction, the first major contribution seems to be \cite{TaoVisanZhang07}. We also refer to the recent works\cite{AkahoriIbrahimKikuchiNawa2013,AkahoriIbrahimKikuchiNawa2021,ColesGustafson2020} which concern the case where $q > 2 + 4/N$ and $\mu >0$. Note that in   \cite{KillipOhPocovnicuVisan2017} the focusing-cubic defocusing-quintic NLS in $\R^3$ is studied, see also \cite{FukayaOhta,LewinRotaNodari2020}. Finally, regarding the sole existence of minimal action solutions we refer to  \cite{AlvesSoutoMontenegro2012} where, relying on the pioneering work of Brezis-Nirenberg \cite{BrezisNirenberg1983}, the existence of positive real solutions for equations of the type of \eqref{eqn:Laplace} is addressed in a very general setting. \medskip 

Alternatively, one can search for solutions to \eqref{eqn:Laplace} having a prescribed $\mathit{L}^2$-norm. It is standard to check that the Energy functional
\begin{equation*}
%\label{L6-1}
F_{\mu}(u) = \dfrac{1}{2} \normLp{\diff u}{2}^2 - \dfrac{\mu}{q} \normLp{u}{q}^q - \dfrac{1}{2^*}\normLp{u}{2^*}^{2^*}
\end{equation*}
is of class $C^1$
and that a critical point of $F_{\mu}$ restricted to the (mass) constraint
\begin{equation*}
S(c) = \Big\{ u \in H^1(\R^N) : ||u||_{L^2(\R^N)}^2 = c \Big\}
\end{equation*}
gives a solution to \eqref{eqn:Laplace}. Here the parameter $\lambda \in \R$ arises as a Lagrange multiplier, it does depend on the solution and is not a priori given. 

The prescribed {\it mass} approach that we shall follow here,  have seen an increasing interest in these last years, applied to various related problems, see, for example, \cite{BartschJeanjeanSoave16,  BartschSoave2019,
JeanjeanLu-nonlinarity, Stefanov19} and the references within. This approach is particularly relevant from a physical point of view. Indeed, the $L^2$-norm is a preserved quantity of the evolution and the variational characterization of such solutions is often a strong help to analyze their orbital stability/instability, see, for example, \cite{BellazziniJeanjeanLuo2013,CazenaveLions1982,Soave2020,Soave2020Sobolevcriticalcase}. For future reference, we now recall, 
\begin{definition}
	We say that  $u_c \in S(c)$ is a ground state solution to \eqref{eqn:Laplace} if it is a solution having minimal Energy among all the solutions which belong to $S(c)$. Namely, if
	$$\quad F_{\mu}(u_c) =  \displaystyle \inf \big\{F_{\mu}(u), u \in S(c), \big(F_\mu\big|_{S(c)}\big)'(u) = 0 \big\}.$$
\end{definition}
Note that this definition, first introduced in \cite{BellazziniJeanjean2016} on a related problem, keeps its meaning  when the Energy $F_{\mu}$ is unbounded from below on $S(c)$. \smallskip

\cref{NLS0} can be viewed as a special case of equations of the form
\begin{align}
i \partial_t \phi + \Delta \phi + \mu \phi |\phi|^{p_1-2} + \phi |\phi|^{p_2 - 2} = 0, \quad (t, x) \in \R \times \Rn,  \label{NLS0E}
\end{align}
where it is assumed that $2< p_1 \leq p_2 \leq 2^*.$ In the study of (\ref{NLS0E}) an important r\^ole is played by the so-called $L^2$-critical exponent
$$p_c = 2 + \frac{4}{N}.$$ 
A very complete analysis of the various cases that may happen for \eqref{NLS0E}, depending on the values of $(p_1,p_2)$, has been provided recently in \cite{Soave2020,Soave2020Sobolevcriticalcase}. The paper \cite{Soave2020} deals with the cases where $p_2 < 2^*$ and \cite{Soave2020Sobolevcriticalcase} with the cases where 
$p_2 = 2^*$. Let us recall some rough elements of this study, referring to \cite{Soave2020,Soave2020Sobolevcriticalcase} for a more complete picture and for precise statements. 
If $2 < p_1 \leq p_2 < p_c$ then the associated Energy functional is bounded from below on $S(c)$ for any $c>0$. 
One speaks of a {\it mass} subcritical case and to find a ground state one looks for a global minimum on $S(c)$. The classical approach Compactness by Concentration of  P.L. Lions \cite{LIONS1984-1, LIONS1984-2} permits to treat this case.   
If $p_2 >p_c$, the functional is unbounded from below on $S(c)$ for any $c>0$ and one speaks of  a {\it mass} supercritical case. It may still be possible to find a ground state,  for example, if $p_c \leq p_1 \leq p_2 \leq 2^*$,  then a ground state exists and it is characterized as a critical point of {\it mountain-pass type} at a strictly positive level of the Energy functional. Also, if one assumes that  $2 < p_1 < p_c <p_2 \leq 2^*$,  as it is the case of (\ref{NLS0})  the presence of the lower order mass subcritical term, created, for sufficiently small values of $c>0$, a geometry of local minima on $S(c).$ 
The existence of a ground state solution, which corresponds to a local minimizer can be obtained. Finally, note that in some cases, for example if 
$2 < p_1 \leq p_2 = p_c$, the Energy functional will be bounded (or not) from below depending on the value of $c>0$, one refers to  a mass critical case. \medskip

In  \cite{JeanjeanJendrejLeVisciglia2020}, we pursued the study of \cite{Soave2020Sobolevcriticalcase}, concentrating on the case $2 < p_1 < p_c <p_2 = 2^*$, and we now recall the results obtained there.  For any 
$\mu >0$, one explicit a value $c_0 = c_0(\mu) >0$ such that, for any $c \in (0, c_0)$, there exists a set $V(c) \subset S(c)$ having the property that
\begin{equation}
\label{well}
m(c) := \inf_{u \in V(c)} F_{\mu}(u) < 0 <  \inf_{u \in \partial V(c)}F_{\mu}(u).
\end{equation}
The sets $V(c)$ and $\partial V(c)$ being given by
$$V(c) := \{ u \in S(c) : \normLp{\diff u}{2}^2  < \rho_0\}, \qquad \partial V(c) := \{ u \in S(c) : \normLp{\diff u}{2}^2  = \rho_0\}$$
for a suitable $\rho_0 >0$, depending only on $c_0 >0$ but not on $c \in (0,c_0)$.
In \cite[Theorem 1.2]{JeanjeanJendrejLeVisciglia2020} it was shown that
\begin{theorem}\label{thm-1}
	Let $N \geq 3$, $2 < q < 2 + \frac{4}{N}$. For any $\mu >0$ there exists a $c_0 = c_0(\mu) >0$ such that, for any $c \in (0, c_0)$,  $F_{\mu}$ restricted to $S(c)$
	has a ground state. This ground state is a (local) minimizer of $F_{\mu}$ in the set $V(c)$. In addition any ground state for $F_{\mu}$ on $S(c)$ is a local minimizer of $F_{\mu}$ on $V(c)$.
\end{theorem}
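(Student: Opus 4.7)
The plan is to prove the two assertions in turn: first that the infimum $m(c) = \inf_{V(c)} F_\mu$ is attained, producing a local minimizer which is automatically a constrained critical point; second that every ground state lies inside $V(c)$, hence must coincide with one of those local minimizers.

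For the existence step I would apply the direct method inside the radial subspace. Let $(u_n) \subset V(c)$ be a minimizing sequence for $m(c)$. Replacing $u_n$ by $|u_n|$ and then by its Schwarz symmetrization preserves every $L^p$ norm and does not increase the Dirichlet integral, so I may assume each $u_n$ is nonnegative, radially nonincreasing, and still in $V(c)$. The sequence is bounded in $H^1$ since $\|\nabla u_n\|_{L^2}^2 < \rho_0$ and $\|u_n\|_{L^2}^2 = c$; up to a subsequence, $u_n \rightharpoonup u$ weakly in $H^1$, pointwise a.e., and strongly in $L^q$ by the compact radial embedding. The crucial quantitative input is the consequence of Sobolev's inequality that
\begin{equation*}
\tfrac{1}{2}\|\nabla v\|_{L^2}^2 - \tfrac{1}{2^*}\|v\|_{L^{2^*}}^{2^*} \geq 0 \qquad \text{whenever } \|\nabla v\|_{L^2}^2 \leq \rho_0,
\end{equation*}
provided $\rho_0$ is chosen small enough; this smallness is exactly the calibration encoded in \eqref{well}. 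Applying the Brezis–Lieb lemma to the three nonlinear terms of $F_\mu$ together with the strong $L^q$ convergence gives
\begin{equation*}
F_\mu(u_n) = F_\mu(u) + \Bigl[\tfrac{1}{2}\|\nabla(u_n-u)\|_{L^2}^2 - \tfrac{1}{2^*}\|u_n-u\|_{L^{2^*}}^{2^*}\Bigr] + o(1),
\end{equation*}
whose residual bracket is nonnegative by the displayed inequality. Passing to the limit yields $F_\mu(u) \leq m(c) < 0$, so in particular $u \not\equiv 0$. A scaling-based subadditivity argument, comparing $F_\mu$ on $S(c_1)$ and $S(c)$ with $c_1 = \|u\|_{L^2}^2$, then rules out mass loss $c_1 < c$ and forces $u_n \to u$ strongly in $H^1$. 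Consequently $F_\mu(u) = m(c) < 0 < \inf_{\partial V(c)} F_\mu$, so $u$ lies in the interior of $V(c)$ and is the sought local minimizer, which is a critical point of $F_\mu\big|_{S(c)}$ by the Lagrange multiplier rule.

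For the second step, let $u$ be any ground state on $S(c)$. As a constrained critical point it solves \eqref{eqn:Laplace} for some $\lambda \in \R$ and therefore satisfies the Pohozaev identity. Combining Pohozaev with the equation tested against $u$ eliminates $\lambda$ and yields
\begin{equation*}
\|\nabla u\|_{L^2}^2 = \mu\,\tfrac{N(q-2)}{2q}\,\|u\|_{L^q}^q + \|u\|_{L^{2^*}}^{2^*}.
\end{equation*}
Substituting back into $F_\mu(u)$ cancels the critical term and leaves $F_\mu(u) = \tfrac{1}{N}\|\nabla u\|_{L^2}^2 - \mu\,\kappa_q\,\|u\|_{L^q}^q$ with $\kappa_q > 0$ because $q < 2+4/N$. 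Bounding $\|u\|_{L^q}^q$ via the Gagliardo–Nirenberg inequality and using $\|u\|_{L^2}^2 = c$ produces
\begin{equation*}
F_\mu(u) \geq \tfrac{1}{N}\|\nabla u\|_{L^2}^2 - K(\mu,c)\,\|\nabla u\|_{L^2}^{N(q-2)/2},
\end{equation*}
where the exponent $N(q-2)/2$ is strictly less than $2$. The right-hand side becomes strictly positive once $\|\nabla u\|_{L^2}^2$ exceeds some threshold $\rho_1(\mu,c)$, and since $K(\mu,c)\to 0$ as $c\to 0$ one can shrink $c_0$ to ensure $\rho_1(\mu,c) < \rho_0$ for every $c\in(0,c_0)$. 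Being a ground state, $u$ satisfies $F_\mu(u) \leq m(c) < 0$, which forces $\|\nabla u\|_{L^2}^2 < \rho_0$, i.e.\ $u \in V(c)$. Combined with the first part this shows $u$ is a local minimizer on $V(c)$.

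The main obstacle lies in the first step: the Sobolev-critical term a priori allows a minimizing sequence to concentrate into an Aubin–Talenti bubble, destroying compactness in $L^{2^*}$. The role of $V(c)$ is precisely to confine the Dirichlet integral to a regime in which the critical term is dominated by the kinetic energy; the resulting nonnegativity of the Brezis–Lieb residual, together with the scaling/subadditivity argument that prevents mass from leaking out, is what forces the weak limit to absorb the full deficit $m(c)$. The hypothesis $q < 2+4/N$ enters both in guaranteeing $m(c) < 0$ and in the Pohozaev-based a priori bound of part two.
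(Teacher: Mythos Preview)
The paper does not contain a proof of \cref{thm-1}: the result is quoted verbatim from \cite[Theorem~1.2]{JeanjeanJendrejLeVisciglia2020} and is used only as background for the present work. There is therefore no ``paper's own proof'' to compare against; \cref{prop:1} merely recalls that $m(c)$ is attained and upgrades the minimizer to a Schwarz symmetric one by rearrangement.

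Your sketch is nonetheless a reasonable outline of how the argument in \cite{JeanjeanJendrejLeVisciglia2020} goes, with one stylistic difference worth noting. You pass at once to the radial subspace and exploit the compact embedding $H^1_r\hookrightarrow L^q$; this is legitimate for producing \emph{one} minimizer, but the companion result recalled here as \cref{thm-2} asserts that $\mathcal{M}_c$ is compact \emph{up to translation}, which indicates that the original proof handles general (non-radial) minimizing sequences via concentration--compactness and the strict subadditivity of $c\mapsto m(c)$. Your radial shortcut buys a cleaner existence argument but does not by itself yield that stronger translational compactness.

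Two small points deserve a bit more care. First, when you apply the inequality $\tfrac12\|\nabla v\|_2^2-\tfrac{1}{2^*}\|v\|_{2^*}^{2^*}\ge 0$ to $v=u_n-u$, you need $\|\nabla(u_n-u)\|_2^2\le\rho_0$; this follows from $\|\nabla u_n\|_2^2=\|\nabla u\|_2^2+\|\nabla(u_n-u)\|_2^2+o(1)$ together with $\|\nabla u_n\|_2^2<\rho_0$, but it should be said explicitly. Second, the assertion ``shrink $c_0$ to ensure $\rho_1(\mu,c)<\rho_0$'' hides a mild circularity, since $\rho_0$ itself is calibrated through $c_0$; in \cite{JeanjeanJendrejLeVisciglia2020} both constants are chosen simultaneously so that the structural inequality \eqref{well} and the a~priori bound you derive from $Q_\mu(u)=0$ are compatible. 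Apart from these details your two-step strategy is sound.
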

\begin{remark}\label{playbetweenparameter}
	The value of $c_0 = c_0(\mu) >0$ is explicit, see \cite[Lemma 2.1]{JeanjeanJendrejLeVisciglia2020}. In particular, $c_0 >0$ can be taken arbitrary large by taking $\mu >0$ small enough.
\end{remark}
\begin{remark}\label{maps}
	It was proved in \cite[Lemma 2.6]{JeanjeanJendrejLeVisciglia2020} that the map $c \mapsto m(c)$ is non-increasing and that $m(c) \to 0$ as $c \to 0$.
\end{remark}

Next, introducing  the set
\begin{align*} 
%\label{set-stable-I}
\mathcal{M}_c := \{u \in V(c) : F_{\mu}(u) = m(c)\},
\end{align*}
it was established in \cite{JeanjeanJendrejLeVisciglia2020}  that
\begin{theorem}\label{thm-2}
	Let $N \geq 3$, $2 < q < 2 + \frac{4}{N}$,  $\mu >0$ and  $c_0 = c_0(\mu) >0$ be given in \cref{thm-1}. Then, for any $c \in (0, c_0),$ the set $\mathcal{M}_c$ is compact, up to translation, and it is orbitally stable.
\end{theorem}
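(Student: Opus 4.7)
The plan is to follow the classical Cazenave--Lions \cite{CazenaveLions1982} scheme in two stages. First show that every minimizing sequence $(u_n)\subset V(c)$ for $m(c)$ admits, up to translations and extraction, a strongly convergent subsequence in $H^1(\R^N)$; this immediately gives compactness of $\mathcal{M}_c$ up to translations. Then deduce orbital stability by combining this compactness with the conservation of mass and energy along the flow of \eqref{NLS0}.

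For the compactness step, the constraint $\|\diff u_n\|_{L^2}^2\leq\rho_0$ makes $(u_n)$ bounded in $H^1$, so after translations $y_n\in\R^N$ and extraction, $u_n(\cdot-y_n)\weakto u$ in $H^1$. Vanishing is excluded by the choice of $\rho_0$ in \cite[Lemma 2.1]{JeanjeanJendrejLeVisciglia2020}: this choice forces $\frac{1}{2}\rho-\frac{1}{2^*}\mathcal{S}^{-2^*/2}\rho^{2^*/2}\geq 0$ on $[0,\rho_0]$, so if $\|u_n\|_{L^q}\to 0$ (which would follow from vanishing via Lions' lemma), Sobolev's inequality yields $\liminf F_\mu(u_n)\geq 0$, contradicting $m(c)<0$. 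To push the mass $\|u\|_{L^2}^2$ up to $c$, I would apply the Brézis--Lieb splitting of $F_\mu$ and of the $L^2$ norm to the residual $w_n:=u_n(\cdot-y_n)-u$. Since $w_n$ lies asymptotically in the gradient ball, the same non-negativity argument gives $\liminf F_\mu(w_n)\geq 0$; combining with $F_\mu(u)\geq m(\|u\|_{L^2}^2)\geq m(c)$ (\cref{maps}) and $F_\mu(u_n)\to m(c)$ forces $F_\mu(w_n)\to 0$, which in turn, together with the sharp Sobolev inequality, gives $w_n\to 0$ in $H^1$. Hence $u_n(\cdot-y_n)\to u$ strongly and $u\in\mathcal{M}_c$.

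For the stability step, I argue by contradiction: assume $\varepsilon_0>0$, $\psi_{0,n}\to u_0\in\mathcal{M}_c$ in $H^1$ and times $t_n>0$ with $\dist{\psi_n(t_n)}{\mathcal{M}_c}\geq\varepsilon_0$, where $\psi_n$ is the $H^1$ solution of \eqref{NLS0} with datum $\psi_{0,n}$. Setting $c_n:=\|\psi_{0,n}\|_{L^2}^2\to c$, continuity gives $\psi_{0,n}\in V(c_n)$ and $F_\mu(\psi_{0,n})\to m(c)<0$ for large $n$. The crucial dynamical point is that $\psi_n(t)$ is trapped in $V(c_n)$ on its whole existence interval: by conservation $F_\mu(\psi_n(t))=F_\mu(\psi_{0,n})<0$, whereas $\inf_{\partial V(c_n)}F_\mu>0$ by \eqref{well}, so $H^1$-continuity of the trajectory prevents $\psi_n(t)$ from crossing $\partial V(c_n)$. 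This gives a uniform $H^1$ bound on $\psi_n(t)$, and hence global existence, bypassing a stand-alone global well-posedness argument for the Sobolev-critical problem. At $t=t_n$ we thus have $\psi_n(t_n)\in V(c_n)$ with $\|\psi_n(t_n)\|_{L^2}^2\to c$ and $F_\mu(\psi_n(t_n))\to m(c)$; a trivial rescaling onto $S(c)$ turns this into a minimizing sequence for $m(c)$, and the compactness step yields a strong $H^1$ limit in $\mathcal{M}_c$ up to translations, contradicting $\varepsilon_0>0$.

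The main obstacle I anticipate is the exclusion of dichotomy in the compactness step: the Sobolev-critical exponent obstructs the standard Lions strict subadditivity argument, and the workaround is the quantitative use of the gradient ball of radius $\rho_0$ in which the critical nonlinearity is absorbed by the kinetic term. All other steps are routine consequences of $H^1$ boundedness, local $H^1$ well-posedness of \eqref{NLS0}, and conservation of mass and energy.
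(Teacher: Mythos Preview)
The paper does not prove \cref{thm-2}; it is quoted as a result already established in \cite{JeanjeanJendrejLeVisciglia2020}. So there is no in-paper proof to compare against, and I evaluate your outline on its own merits.

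Your stability step contains a genuine gap. You write that the trapping argument ``gives a uniform $H^1$ bound on $\psi_n(t)$, and hence global existence, bypassing a stand-alone global well-posedness argument for the Sobolev-critical problem''. This implication is false for the focusing energy-critical nonlinearity: a uniform bound on $\normHs{\psi_n(t)}{1}$ on the maximal interval does \emph{not} by itself rule out blow-up of the critical Strichartz norm, and hence does not give $T_{\max}=\infty$. The paper flags exactly this point right after the statement of \cref{thm-2}: ``this fact is non-trivial due to the critical exponent that appears in \eqref{NLS0}, even if the $H^1(\R^N)$ norm of the solution is uniformly bounded on the lifespan of the solution, see \cite{JeanjeanJendrejLeVisciglia2020} for more details.'' In \cite{JeanjeanJendrejLeVisciglia2020} global existence for data near $\mathcal{M}_c$ is obtained through an additional argument (of Kenig--Merle/profile-decomposition type adapted to the local-minimum regime), not as a corollary of the $H^1$ trapping. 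Without this ingredient your contradiction scheme collapses, because you cannot even guarantee that $\psi_n(t_n)$ is defined.

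A second, smaller gap sits in your compactness step. The claim ``the same non-negativity argument gives $\liminf F_\mu(w_n)\geq 0$'' for the residual $w_n=u_n(\cdot-y_n)-u$ is not justified: in the vanishing case that argument rested on $\normLp{u_n}{q}\to 0$, which you do not have for $w_n$ when $\normLp{u}{2}^2<c$. The gradient-ball bound only yields $F_\mu(w_n)\geq -\frac{\mu}{q}\normLp{w_n}{q}^q$, which is not $\geq 0$. To exclude dichotomy you still need either a strict subadditivity inequality of the type $m(c)<m(\alpha)+m(c-\alpha)$ for $0<\alpha<c$, or the finer iterative profile analysis carried out in \cite{JeanjeanJendrejLeVisciglia2020}; the gradient ball alone does not do it.
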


Notice that the definition of the orbital stability, see, for example, \cite{CazenaveLions1982} or \cite{HajaiejStuart2004}, implies the global existence 
of solutions to \eqref{NLS0} for initial datum $\varphi$ close enough to the set $\mathcal{M}_c$. Here, this fact 
is non-trivial due to the critical exponent that appears in \eqref{NLS0}, even if the $H^1(\R^N)$ norm of the solution
is uniformly bounded on the lifespan of the solution, see \cite{JeanjeanJendrejLeVisciglia2020} for more details. \medskip

This structure of local minima, for a functional which is unbounded from below, suggests the possibility to search for a solution lying at a mountain pass level. This type of solution has indeed been obtained recently on related problems, see, for example, \cite{BellazziniJeanjean2016,CingolaniJeanjean2019,NorisTavaresVerzini2019}. 
In particular, on (\ref{NLS0E}) the existence of such a mountain pass geometry had been observed in \cite{Soave2020} in a Sobolev subcritical setting, namely when $p_2 < 2^*$, and a corresponding solution had been obtained. However, when one considers the Sobolev critical case $p_2= 2^*$,  an additional difficulty arises due to the fact that to prove the existence of such a solution one needs a precise upper estimate of the associated mountain pass level. Roughly speaking this upper estimate is crucial to guarantee that a Palais-Smale sequence at the mountain pass level does not carry a bubble which, by vanishing when passing to the weak limit, would prevent its  strong convergence in $H^1(\R^N)$. \medskip

The need to obtain, in problem involving a Sobolev critical term, a sharp estimate on some minimax levels is known since the pioneering work of Brezis-Nirenberg \cite{BrezisNirenberg1983} and the usual way to derive such strict upper bound is through the use of testing functions. It will also be the case here but we shall need, in this context, to overcome non-standard difficulties due to the fact that we search for a solution with a prescribed norm. In \cite{Soave2020Sobolevcriticalcase} such difficulties were first encountered and overcome but under the assumption that $p_c \leq p_1 \leq p_2 \leq 2^*$. In that case there is no solution at an energy level below the mountain pass level.  In the problem we are considering, the need to respect $L^2$-constraint, combined with the existence of a ground state solution makes things more complex. Indeed, it appears necessary for proving the strict inequality that we need, see \eqref{eqn:2.6} in \cref{prop:4}, to control precisely the interaction between standard truncated extremal Sobolev functions, as recalled in \cref{estimates-U}, with a suitable sequence of ground states for $m(c_n)$ with $c_n \to c$, see the proof of \cref{prop:5} for more details.  Actually, the existence of a second solution to (\ref{eqn:Laplace}) was proposed in 
\cite{Soave2020Sobolevcriticalcase}  as an open problem. \medskip

From here until the end of the paper, $\mathcal{S}$ denotes the best constant in the Sobolev inequality, see \eqref{Sobolev-I}.
We now state the main result of this paper. 

\begin{theorem}\label{thm-main}
	Let $N \geq 4$, $2 < q < 2 + \frac{4}{N}$,  $\mu >0$ and  $c_0 = c_0(\mu) >0$ be given in \cref{thm-1}. Then, for any $c \in (0, c_0),$ there exists a second solution $v_c \in S(c)$ which satisfies
	$$ 0 < F_{\mu}(v_c) < m(c) + \dfrac{\mathcal{S}^{\frac{N}{2}}}{N}.$$
	In particular, $v_c \in S(c)$ is not a ground state. 
\end{theorem}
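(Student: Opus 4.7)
The plan is to realise $v_c$ as a critical point of $F_\mu$ constrained to $S(c)$ at a mountain--pass level. By \eqref{well}, the set $V(c)$ is separated from its complement in $S(c)$ by the barrier $\partial V(c)$, on which $F_\mu > 0 > m(c)$. Moreover $F_\mu$ is unbounded from below on $S(c)$: the $L^2$-preserving dilation $u_\tau(x) := \tau^{N/2} u(\tau x)$ keeps $u_\tau$ in $S(c)$ while $F_\mu(u_\tau) \to -\infty$ as $\tau \to +\infty$, since the $2^*$ term is $L^2$-supercritical. Fix a ground state $u_c \in \mathcal{M}_c$ given by \cref{thm-1} and choose $w \in S(c)$ outside $V(c)$ with $F_\mu(w) < m(c)$. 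Set
\begin{equation*}
\Gamma(c) := \{\gamma \in C([0,1], S(c)) : \gamma(0) = u_c,\ \gamma(1) = w\}, \qquad \sigma(c) := \inf_{\gamma \in \Gamma(c)} \max_{t \in [0,1]} F_\mu(\gamma(t)).
\end{equation*}
Every path in $\Gamma(c)$ crosses $\partial V(c)$ by continuity, so $\sigma(c) \geq \inf_{\partial V(c)} F_\mu > 0 > m(c)$, and the candidate $v_c$ is expected to sit at level $\sigma(c)$.

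Next I would produce a bounded Palais--Smale sequence $(u_n) \subset S(c)$ for $F_\mu|_{S(c)}$ at level $\sigma(c)$ which also satisfies the Pohozaev identity asymptotically. This is the classical difficulty for $L^2$-constrained problems; it can be handled by a now-standard trick, namely applying Ekeland's principle on the augmented manifold $\R \times S(c)$ equipped with the $L^2$-preserving dilation $(s, u) \mapsto u_{e^s}$, a device already employed in \cite{BellazziniJeanjean2016, Soave2020, Soave2020Sobolevcriticalcase} in closely related settings. The asymptotic Pohozaev relation combined with $F_\mu(u_n) \to \sigma(c)$ yields a uniform $H^1(\R^N)$ bound on $(u_n)$, so up to translations and extraction, $u_n \weakto v_c$ in $H^1(\R^N)$ and $v_c$ solves \eqref{eqn:Laplace} for some Lagrange multiplier $\lambda$ obtained as the limit of those associated with $u_n$. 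A Br\'ezis--Lieb splitting together with the concentration--compactness principle then shows that either $u_n \to v_c$ strongly in $H^1(\R^N)$, or a concentrating Sobolev bubble escapes and carries an energy quantum of at least $\mathcal{S}^{N/2}/N$ from the critical term. A careful accounting of the residual $L^2$-mass and energy in the latter scenario, combined with the monotonicity in \cref{maps}, would force $\sigma(c) \geq m(c) + \mathcal{S}^{N/2}/N$. Hence compactness of $(u_n)$ reduces to proving the strict upper bound
\begin{equation}\label{plan:MPupper}
\sigma(c) < m(c) + \frac{\mathcal{S}^{N/2}}{N}.
\end{equation}

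The main obstacle is \eqref{plan:MPupper}, which I would establish by designing an explicit competitor path, adapting the Br\'ezis--Nirenberg strategy to the mass constraint. Take $u_c \in \mathcal{M}_c$ positive with $u_c(0) > 0$ and, for small $\varepsilon > 0$, let $\tilde U_\varepsilon$ be a truncated and rescaled Aubin--Talenti extremal concentrated at the origin. Form a two-parameter family $u_c + R\,\tilde U_\varepsilon$, normalised by a small $L^2$-dilation so as to stay on $S(c)$ for each $R \in [0, R_\star]$; for $R_\star$ large the endpoint sits outside $V(c)$ with $F_\mu$ strongly negative, while the starting point is $u_c$ at energy $m(c)$. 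I would then bound $\max_R F_\mu$ along this path strictly by $m(c) + \mathcal{S}^{N/2}/N$, combining the sharp asymptotics of $\|\diff U_\varepsilon\|_{L^2}^2$ and $\|U_\varepsilon\|_{L^{2^*}}^{2^*}$ (whose mismatch gives a Br\'ezis--Nirenberg gap of size $O(\varepsilon^{N-2})$) with the positive interaction term $\int u_c^{2^*-1} U_\varepsilon$ of order $\varepsilon^{(N-2)/2}$. The dimensional restriction $N \geq 4$ enters precisely here: in dimension three a logarithmic correction would have to be tracked and the mass-subcritical term $|u|^q$ would not suffice to close the gap. A further delicate point is that the $L^2$-renormalisation distorts the test function; to control the residual errors I would follow the hint given in the introduction and replace $u_c$ by a slightly perturbed ground state associated with a mass $c_n \to c$. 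Once \eqref{plan:MPupper} is secured, the dichotomy above forces $u_n \to v_c$ strongly in $H^1(\R^N)$, so $v_c \in S(c)$ is a critical point of $F_\mu|_{S(c)}$ at level $\sigma(c) \in (0, m(c) + \mathcal{S}^{N/2}/N)$; since $F_\mu(v_c) = \sigma(c) > m(c)$, $v_c$ is not a ground state, completing the proof.
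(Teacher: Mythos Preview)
Your overall scheme coincides with the paper's: a mountain-pass level strictly above the local minimum, a bounded Palais--Smale sequence carrying the asymptotic Pohozaev identity (\cref{prop:3}), compactness under the threshold $m(c)+\mathcal{S}^{N/2}/N$ (\cref{prop:4}), and verification of the strict upper bound (\cref{prop:44}). The genuine divergence is in how you attack the upper bound.

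You propose to center the truncated bubble $U_\varepsilon$ at the origin, where the radial ground state $u_c$ is largest, and to harvest the interaction $\int u_c^{2^*-1}U_\varepsilon\sim\varepsilon^{(N-2)/2}$ as the gain that beats the Br\'ezis--Nirenberg error $O(\varepsilon^{N-2})$. The paper does the \emph{opposite}: it translates the ground state $u_{c_n}$ by $y_n\to\infty$ so that all cross terms with $U_{\varepsilon_n}$ become negligible (\cref{lemma:9LL}, \cref{lemma:9LLL}), and the gain comes instead from the mass-subcritical term $\|U_{\varepsilon_n}\|_q^q\sim\varepsilon_n^{\,N-(N-2)q/2}$, which for $N\geq 4$ beats the normalisation error $\|U_{\varepsilon_n}\|_2^2$ and the $O(\varepsilon_n^{N-2})$ residual. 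The paper explicitly contrasts this with the Tarantello-type interaction strategy you sketch (\cref{lien-tarantello}). Because the paper's translated test paths are non-radial while its compactness argument genuinely requires the compact embedding $H^1_r\hookrightarrow L^q$ (\cref{R-I-2}), it must prove an additional comparison $M^0(c)\leq M(c)$ between the radial and non-radial mountain-pass values (\cref{prop:6}); your route, if carried out radially throughout, would bypass this step and is in spirit the Wei--Wu approach cited in the paper's Addendum (which in fact reaches $N=3$, so your stated rationale for the restriction $N\geq 4$ belongs to the paper's argument rather than to yours).

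Two points need tightening. First, your compactness sketch invokes ``up to translations'', but if you work in $H^1_r$---as your radial test path suggests and as the $L^q$-compactness in \cref{prop:4} requires---translations are unavailable; conversely, in full $H^1$ the dichotomy you describe is incomplete, since a single Br\'ezis--Lieb splitting does not rule out a second escaping profile. Second, an ``$L^2$-dilation'' by definition preserves the $L^2$-norm and so cannot bring $u_c+R\,U_\varepsilon$ back onto $S(c)$; the paper handles this via the device you allude to (a ground state at mass $c_n<c$, \cref{lemma:6}) together with a specific non-$L^2$-preserving scaling (\cref{lemma:4}) under which $F_\mu$ can only decrease.
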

 \cref{thm-main} can be complemented in the following way.

\begin{theorem}\label{thm-additional}
Under the assumptions of \cref{thm-main} we have,
\begin{enumerate}[label=(\roman*)]
		\item \label{point:thm-additional:1} For any fixed $\mu >0$ and assuming that $c \in (0, c_0(\mu))$, 
		\begin{equation*}
		 \normLp{\diff v_c}{2}^2 \to \mathcal{S}^{\frac{N}{2}} \quad \mbox{and} \quad  F_{\mu}(v_c) \to \dfrac{\mathcal{S}^{\frac{N}{2}}}{N} \quad \mbox{as} \quad c \to 0.
		\end{equation*}
		\item \label{point:thm-additional:2}  For any fixed $c >0$, $v_c \in S(c)$ exists for any $\mu >0$ sufficiently small and 
		\begin{equation*}
		 \normLp{\diff v_c}{2}^2 \to \mathcal{S}^{\frac{N}{2}} \quad \mbox{and} \quad F_{\mu}(v_c) \to \dfrac{\mathcal{S}^{\frac{N}{2}}}{N} \quad \mbox{as}  \quad \mu \to 0.
		\end{equation*}
	\end{enumerate}
	\end{theorem}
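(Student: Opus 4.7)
The plan is to analyze the asymptotics of $v_c$ using the Pohozaev identity associated with \eqref{eqn:Laplace}, the Sobolev inequality, and the upper energy bound from \cref{thm-main}. Since $v_c$ solves \eqref{eqn:Laplace}, it satisfies
$$\normLp{\diff v_c}{2}^2 = \mu\gamma_q \normLp{v_c}{q}^q + \normLp{v_c}{2^*}^{2^*}, \qquad \gamma_q := \frac{N(q-2)}{2q},$$
and by substitution into $F_\mu$,
$$F_\mu(v_c) = \frac{1}{N}\normLp{v_c}{2^*}^{2^*} - \mu\,\frac{4 - N(q-2)}{4q}\,\normLp{v_c}{q}^q.$$
Note that $q\gamma_q = N(q-2)/2 < 2$ since $q < 2+4/N$, so the Gagliardo-Nirenberg bound $\normLp{v_c}{q}^q \leq C\, c^{q(1-\gamma_q)/2}\,\normLp{\diff v_c}{2}^{q\gamma_q}$ is subcritical in $\normLp{\diff v_c}{2}$.

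First I would establish that $\normLp{\diff v_c}{2}$ stays bounded in both regimes. Combining the above identity for $F_\mu(v_c)$, the upper bound $F_\mu(v_c) < m(c) + \mathcal{S}^{N/2}/N$, and Gagliardo-Nirenberg yields
$$\normLp{\diff v_c}{2}^2 \leq N\bigl(m(c) + \mathcal{S}^{N/2}/N\bigr) + C\,\mu\,c^{q(1-\gamma_q)/2}\,\normLp{\diff v_c}{2}^{q\gamma_q},$$
and since $q\gamma_q < 2$ while $\mu\,c^{q(1-\gamma_q)/2} \to 0$ in either limit, a standard Young-type absorption gives boundedness; in particular $\normLp{v_c}{q}^q$ is bounded and $\mu\normLp{v_c}{q}^q \to 0$. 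For the existence part of \ref{point:thm-additional:2}, one just observes via \cref{thm-main} and \cref{playbetweenparameter} that $c_0(\mu)\to\infty$ as $\mu\to 0$, hence $c \in (0,c_0(\mu))$ for every sufficiently small $\mu>0$.

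Next I would extract the upper asymptotic. For \ref{point:thm-additional:1} one has $m(c)\to 0$ by \cref{maps}. For \ref{point:thm-additional:2} a direct estimate on $V(c)$ (Sobolev controls the critical term on $V(c)$ by a constant depending only on $\rho_0$, and Gagliardo-Nirenberg controls $\normLp{u}{q}^q$) shows $0 > m(c) \geq -K\mu$ as $\mu\to 0$ for fixed $c$, so $m(c)\to 0$ in both regimes. Hence the formula for $F_\mu(v_c)$ gives $\normLp{v_c}{2^*}^{2^*} < \mathcal{S}^{N/2} + o(1)$. For the matching lower asymptotic, combining Sobolev with Pohozaev produces
$$\mathcal{S}\,\normLp{v_c}{2^*}^2 \leq \normLp{\diff v_c}{2}^2 = \normLp{v_c}{2^*}^{2^*} + o(1).$$
Setting $Z := \normLp{v_c}{2^*}^2$, this reads $\mathcal{S} Z - Z^{2^*/2} \leq o(1)$; the map $Z\mapsto \mathcal{S} Z - Z^{2^*/2}$ is nonnegative on $[0,\mathcal{S}^{(N-2)/2}]$ with zeros only at the endpoints, so this forces either $Z = o(1)$ or $Z \geq \mathcal{S}^{(N-2)/2} + o(1)$, i.e.\ $\normLp{v_c}{2^*}^{2^*} \geq \mathcal{S}^{N/2} + o(1)$.

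The vanishing alternative $Z = o(1)$ would, via Pohozaev, force $\normLp{\diff v_c}{2}\to 0$ and hence $F_\mu(v_c)\to 0$, contradicting $F_\mu(v_c) \geq \inf_{\partial V(c)} F_\mu \geq \delta > 0$, which is inherited from the mountain-pass construction of $v_c$ in the proof of \cref{thm-main}. Securing this uniform positivity of $\inf_{\partial V(c)} F_\mu$ is the step I expect to be the main obstacle: a direct Gagliardo-Nirenberg/Sobolev estimate on $\partial V(c)$ gives $\inf_{\partial V(c)} F_\mu \geq \tfrac{1}{2}\rho_0 - \tfrac{1}{2^*}\mathcal{S}^{-2^*/2}\rho_0^{2^*/2} - K\mu\,c^{q(1-\gamma_q)/2}$, and one has to track the dependence of $\rho_0$ on $c_0(\mu)$ from \cite[Lemma~2.1]{JeanjeanJendrejLeVisciglia2020} so that the right-hand side stays bounded below by a positive constant uniformly as $c\to 0$ (resp.\ $\mu\to 0$, taking $\mu$ small enough so that $c < c_0(\mu)$). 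Once this is secured, the dichotomy forces $\normLp{v_c}{2^*}^{2^*}\to \mathcal{S}^{N/2}$, and combining with Pohozaev and the identity for $F_\mu(v_c)$ yields $\normLp{\diff v_c}{2}^2 \to \mathcal{S}^{N/2}$ and $F_\mu(v_c)\to \mathcal{S}^{N/2}/N$ in both cases.
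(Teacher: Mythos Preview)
Your overall strategy coincides with the paper's: use $Q_\mu(v_c)=0$ and Gagliardo--Nirenberg to get boundedness, then the Sobolev dichotomy $\ell=0$ versus $\ell\ge\mathcal{S}^{N/2}$, rule out $\ell=0$, and squeeze against the upper bound from \cref{thm-main}. The only substantive divergence is in how the vanishing alternative is excluded, and this is exactly the step you flag as the obstacle.

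The paper does \emph{not} go through a uniform lower bound on $\inf_{\partial V(c)}F_\mu$; instead it uses that $v_c\in\Lambda^-(c)$, i.e.\ the second-order information $\psi_{v_c}''(1)\le 0$, which reads
\[
\normLp{\diff v_c}{2}^2-\mu\gamma_q(q\gamma_q-1)\normLp{v_c}{q}^q-(2^*-1)\normLp{v_c}{2^*}^{2^*}\le 0.
\]
Eliminating $\mu\gamma_q\normLp{v_c}{q}^q$ between this and $Q_\mu(v_c)=0$ gives
\[
(2-q\gamma_q)\,\normLp{\diff v_c}{2}^2 \le (2^*-q\gamma_q)\,\normLp{v_c}{2^*}^{2^*}\le (2^*-q\gamma_q)\,\mathcal{S}^{-2^*/2}\normLp{\diff v_c}{2}^{2^*},
\]
hence $\normLp{\diff v_c}{2}^{2^*-2}\ge\dfrac{2-q\gamma_q}{2^*-q\gamma_q}\,\mathcal{S}^{2^*/2}$. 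This lower bound is manifestly independent of both $c$ and $\mu$, so the vanishing branch is excluded with no need to track how $\rho_0$ depends on $c_0(\mu)$.

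Your route via $F_\mu(v_c)\ge\inf_{\partial V(c)}F_\mu$ is in principle workable, but it leans on the precise construction of $\rho_0$ in \cite[Lemma~2.1]{JeanjeanJendrejLeVisciglia2020}; in particular for part~\ref{point:thm-additional:2} you would need to verify that, as $\mu\to0$, the quantity $\tfrac12\rho_0-\tfrac1{2^*}\mathcal{S}^{-2^*/2}\rho_0^{2^*/2}$ stays bounded away from zero despite $\rho_0$ varying with $\mu$. The second-order argument avoids this entirely. A minor remark: your detour proving $m(c)\to0$ as $\mu\to0$ is not actually needed, since $m(c)<0$ already gives $F_\mu(v_c)<\mathcal{S}^{N/2}/N$, which together with $\liminf F_\mu(v_c)\ge\mathcal{S}^{N/2}/N$ forces the limit.
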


	\begin{remark}
	\cref{thm-additional} \ref{point:thm-additional:2} can be set in parallel with \cite[Theorem 1.4 2)]{Soave2020Sobolevcriticalcase}. Note that a particular emphasis is given in \cite{Soave2020Sobolevcriticalcase} on the behavior of the solutions as $\mu \to 0$, in the spirit of the so-called Brezis-Nirenberg problem. In that direction, but for a fixed $\lambda \in \R$ problem, we also refer to \cite{ColesGustafson2020}.
	\end{remark}

\begin{theorem}\label{thm-secondary}
	Under the assumptions of \cref{thm-main} the associated standing wave $e^{- \lambda t}v_c(x)$ is strongly unstable.
\end{theorem}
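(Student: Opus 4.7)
The plan is to follow the Berestycki--Cazenave scheme adapted to the $L^2$-constrained setting, as developed, e.g., in \cite{Soave2020Sobolevcriticalcase}. Let $\gamma_q := N(q-2)/(2q) \in (0,1)$, let $Q(u) := \normLp{\diff u}{2}^2 - \mu \gamma_q \normLp{u}{q}^q - \normLp{u}{2^*}^{2^*}$ be the virial functional, and consider the $L^2$-preserving scaling $(s\star u)(x) := s^{N/2} u(sx)$, for which $\tfrac{d}{ds} F_{\mu}(s\star u) = \tfrac{1}{s} Q(s\star u)$. By construction in the proof of \cref{thm-main}, $v_c$ is a mountain-pass critical point, so $Q(v_c)=0$ and $s=1$ is a \emph{strict local maximum} of the fiber map $s \mapsto F_{\mu}(s\star v_c)$.

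The first step is to build an approximating sequence on which blow-up is demonstrated. Testing \eqref{eqn:Laplace} against $v_c$ and subtracting the Pohozaev identity yields $\lambda_c = -\mu(1-\gamma_q)\normLp{v_c}{q}^q / c < 0$; by standard elliptic regularity, $v_c$ therefore decays exponentially, so $|x|v_c \in L^2(\Rn)$. For $s>1$ close to $1$ set $\varphi_s := s \star v_c \in S(c)$: then $\varphi_s \to v_c$ in $H^1$ as $s\to 1^+$, the strict-local-max property gives $F_{\mu}(\varphi_s) < F_{\mu}(v_c)$ and $Q(\varphi_s) < 0$, and $|x|\varphi_s \in L^2$ so the variance is finite.

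Next, one proves invariance under the flow \eqref{NLS0} of the open set
\begin{equation*}
\mathcal{K}^- := \bigl\{ u \in S(c) : F_{\mu}(u) < F_{\mu}(v_c),\ Q(u) < 0,\ \normLp{\diff u}{2}^2 > \rho_0 \bigr\},
\end{equation*}
where $\rho_0$ is the threshold from \eqref{well}. Conservation of mass and energy preserve $\phi(t) \in S(c)$ and $F_{\mu}(\phi(t)) < F_{\mu}(v_c)$ throughout the lifespan. If $Q(\phi(\cdot))$ vanished first at some $t^{\star}$, then $\phi(t^{\star})$ would lie in $S(c)\setminus V(c)$ with $Q(\phi(t^{\star}))=0$ and $F_{\mu}(\phi(t^{\star})) < F_{\mu}(v_c)$; this contradicts the minimax characterization $F_{\mu}(v_c) = \inf\{F_{\mu}(u) : u \in S(c)\setminus V(c),\ Q(u)=0\}$ extracted from the mountain-pass construction proving \cref{thm-main}.

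The decisive step is the quantitative virial bound
\begin{equation*}
Q(u) \leq 2\bigl(F_{\mu}(u) - F_{\mu}(v_c)\bigr) \quad \text{for all } u \in \mathcal{K}^-,
\end{equation*}
obtained by comparing $F_{\mu}(u)$ with $F_{\mu}(s(u)\star u)$, where $s(u)\in(0,1)$ is the unique maximiser of the fiber map through $u$, and by exploiting the explicit form of the fiber together with the fact that $q\gamma_q < 2 < 2^*$. Combined with energy conservation, this yields $Q(\phi(t)) \leq -\delta < 0$ uniformly along the flow for the initial data $\varphi_s$, $s>1$. The classical virial identity
\begin{equation*}
\tfrac{d^2}{dt^2}\intrn |x|^2 |\phi(t,x)|^2\, dx = 8\, Q(\phi(t))
\end{equation*}
then forces $\intrn |x|^2 |\phi(t,x)|^2\, dx$ to become negative in finite time, so $\phi$ cannot exist globally in $H^1$ and must blow up. The main obstacle is the quantitative inequality just displayed: establishing it requires a careful monotonicity/concavity analysis of the fiber map on $\mathcal{K}^-$ that is uniform enough to absorb the Sobolev critical term, and relies essentially on the variational characterization of $v_c$ produced while proving \cref{thm-main}.
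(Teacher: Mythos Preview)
Your proposal is essentially the Berestycki--Cazenave scheme that underlies \cite[Theorem~1.6]{Soave2020Sobolevcriticalcase}, which is exactly the black box the paper invokes. The paper's proof is a three-line verification of that theorem's hypotheses: for $v_t:=(v_c)_t$ with $t>1$ one has $F_\mu(v_t)<\inf_{\Lambda^+(c)}F_\mu=F_\mu(v_c)$ by the characterization in \cref{various-characterization}, $s_{v_t}^+=1/t<1$ by \cref{lemma:14}, and $|x|v_t\in L^2$ from the exponential decay forced by $\lambda_c<0$. So you are reconstructing the cited result rather than giving a genuinely different argument; what you gain is self-containment, what the paper gains is brevity.

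One technical point in your write-up needs repair: the invariant set $\mathcal{K}^-$ should be cut out by the condition $s_u^+<1$ (equivalently $u\notin\overline{W(c)}$, in the paper's notation) rather than by $\|\nabla u\|_2^2>\rho_0$. These are not equivalent, and nothing in the paper guarantees $v_c\notin V(c)$; hence for $s>1$ close to $1$ your datum $\varphi_s$ need not satisfy $\|\nabla\varphi_s\|_2^2>\rho_0$, and the invariance of that gradient condition along the flow is not obvious either. With $s_u^+<1$ everything works cleanly: if $s_{\phi(t^\star)}^+=1$ at some first time $t^\star$ then $\phi(t^\star)\in\Lambda^+(c)$, contradicting energy conservation together with $F_\mu(v_c)=\inf_{\Lambda^+(c)}F_\mu$. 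The corresponding minimax characterization of $F_\mu(v_c)$ is $\inf_{\Lambda^+(c)}F_\mu$, not the infimum over $\{u\in S(c)\setminus V(c):Q(u)=0\}$ (the latter set is merely contained in $\Lambda^+(c)$). This is a formulation issue, not a gap in the strategy.
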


We do not claim any originality in \cref{thm-secondary}. Actually this result is a direct consequence of the variational characterization of the solution obtained in  \cref{thm-main}, combined with recent advances on the subject of instability by blow-up contained in \cite{Soave2020,Soave2020Sobolevcriticalcase}. \medskip

Let us now give some elements of the strategy of the proof of \cref{thm-main}. 
We define 
\begin{align*}
%\label{def:QL}
Q_{\mu}(u) := \normLp{\diff u}{2}^2 - \mu \gamma_q \normLp{u}{q}^q - \normLp{u}{2^*}^{2^*}
\end{align*}
where
\begin{align}\label{def-gamma-q}
\gamma_q := \frac{N(q-2)}{2q}.
\end{align}
It is well known, see, for example, \cite[Lemma 2.7]{JEANJEAN1997}, that all critical points of $F_{\mu}$ restricted to $S(c)$ and thus any solution to \eqref{eqn:Laplace}  satisfies $Q_{\mu}(u) =0$. Introducing the set
\begin{align*}
\slc := \{u \in S(c): Q_{\mu}(u) = 0\}.
\end{align*}
we shall show, see \cref{lemma:14}, that it admits the decomposition into the disjoint union $\slc = \slp  \cup \slm$, where 
\begin{align}\label{def-slp-slm}
\slp := \{u \in \slc: F_{\mu}(u) <0\}, \quad \mbox{and} \quad
\slm := \{u \in \slc: F_{\mu}(u) >0\}.
\end{align}
The ground state $u_c \in S(c)$ obtained in \cref{thm-1}, see also \cite{Soave2020Sobolevcriticalcase}, lies on $\slp$ and can be characterized by
$$ F_{\mu}(u_c) = \inf_{u \in \slp} F_{\mu}(u) = \inf_{u \in V(c)} F_{\mu}(u) = m(c).$$
The critical point $v_c \in S(c)$ obtained in \cref{thm-main} will satisfy, see \cref{various-characterization},
$$ F_{\mu}(v_c) = \inf_{u \in \slm} F_{\mu}(u).$$
The proof of \cref{thm-main} will follow directly from the three propositions below.

We denote by $\Hsr{1}$ the subspace of functions in $\Hs{1}$ which are radially symmetric with respect to $0$, and we define $S_r(c) := S(c) \cap \Hsr{1}$. Accordingly, we also set $\slpr = \slp \cap \Hsr{1}$ and $\slmr = \slm \cap \Hsr{1}$.

Let 
\begin{align}\label{def:M0}
M^0(c) := \inf_{g \in \Gamma^0(c)} \max_{t \in [0,\infty)} F_{\mu}(g(t))
\end{align}
where
\begin{align*}
\Gamma^0(c) := \{g \in \mathit{C}([0,\infty), S_r(c)) :  g(0) \in \slpr, \exists t_g \mbox{ s.t. } g(t)  \in E_c \, \forall t \geq t_g\}
\end{align*}
with \begin{align*}
E_c := \{u \in S(c):  F_{\mu}(u) < 2 m(c)\} \neq \emptyset.
\end{align*}

\begin{proposition} \label{prop:3}
Let $N \geq 3$.
	For any $c \in (0, c_0)$, there exists a Palais-Smale sequence $(u_n) \subset S_r(c)$ for $F_{\mu}$ restricted to $S(c)$ at level $M^0(c)$, with $Q_\mu(u_n) \to 0$ as $n \to \infty$.
\end{proposition}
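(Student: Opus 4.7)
The plan is to apply the augmented-functional scheme of Jeanjean \cite{JEANJEAN1997}. Consider the $L^2$-preserving dilation
\begin{equation*}
H \colon \R \times \Hsr{1} \to \Hsr{1}, \qquad H(s,u)(x) := e^{Ns/2}\,u(e^s x),
\end{equation*}
and the augmented functional
\begin{equation*}
\Ft(s,u) := F_\mu(H(s,u)) = \frac{e^{2s}}{2}\normLp{\diff u}{2}^2 - \frac{\mu\, e^{q\gamma_q s}}{q}\normLp{u}{q}^q - \frac{e^{2^*s}}{2^*}\normLp{u}{2^*}^{2^*}.
\end{equation*}
Differentiation in $s$ yields the key identity $\partial_s \Ft(s,u) = Q_\mu(H(s,u))$, which is precisely the mechanism producing the additional information $Q_\mu(u_n)\to 0$.

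Every path $g \in \Gamma^0(c)$ lifts to $\tilde g := (0,g) \in C([0,\infty), \R \times S_r(c))$, and conversely every augmented path $\tilde g = (\sigma,h)$ for which $H \circ \tilde g$ satisfies the endpoint conditions defining $\Gamma^0(c)$ projects to $g := H \circ \tilde g \in \Gamma^0(c)$ with $F_\mu(g(t)) = \Ft(\tilde g(t))$. Consequently the augmented minimax level agrees with $M^0(c)$. Applying the Ekeland-type deformation argument of \cite[Section 2]{JEANJEAN1997} to $\Ft$ on the product manifold $\R \times S_r(c)$, with perturbations centered on near-minimizing paths of the canonical form $(0, g_n)$, one obtains a sequence $(s_n, w_n) \in \R \times S_r(c)$ satisfying
\begin{equation*}
\Ft(s_n, w_n) \to M^0(c), \quad \partial_s \Ft(s_n, w_n) \to 0, \quad \big\|\partial_u \Ft(s_n,w_n)\big|_{T_{w_n}S_r(c)}\big\|_{*} \to 0,
\end{equation*}
with $(s_n)$ bounded (indeed $s_n \to 0$, since the $\sigma$-component of the perturbed path is controlled by the Ekeland size $\varepsilon_n \to 0$).

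Setting $u_n := H(s_n, w_n) \in S_r(c)$ gives $F_\mu(u_n) \to M^0(c)$ and $Q_\mu(u_n) \to 0$ via the identity above. The map $H(s_n,\cdot)$ is a linear isomorphism of $\Hsr{1}$ sending $T_{w_n}S_r(c)$ onto $T_{u_n}S_r(c)$, with operator norm and inverse norm controlled by $e^{|s_n|}$; the boundedness of $(s_n)$ therefore transfers the tangent-space estimate into $\|(F_\mu|_{S_r(c)})'(u_n)\|_{*} \to 0$, which by the $O(N)$-invariance of $F_\mu$ together with the principle of symmetric criticality is equivalent to the Palais-Smale condition for $F_\mu|_{S(c)}$ at $u_n$. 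The principal technical obstacle is making the deformation respect the path class $\Gamma^0(c)$, i.e.\ keeping the perturbed paths' endpoints in $\slpr$ and $E_c$; this is handled by exploiting the strict separation between the endpoint energy levels (both strictly negative) and the mountain-pass level $M^0(c)$, which by \eqref{well} is bounded below by $\inf_{\partial V(c)} F_\mu > 0$, so that all perturbations can be localized away from the endpoints.
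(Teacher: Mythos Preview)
Your proposal is correct and follows essentially the same approach as the paper: both use the augmented functional $\Ft$ on the product $\R\times S_r(c)$ (you with the exponential parametrization $H(s,u)=e^{Ns/2}u(e^s\cdot)$, the paper with the equivalent multiplicative one $u_s=s^{N/2}u(s\cdot)$), verify that the augmented minimax level coincides with $M^0(c)$, and then invoke a min-max principle with localization to obtain a Palais--Smale sequence $(s_n,w_n)$ with $s_n$ bounded, transferring to $u_n=H(s_n,w_n)$ via the linear isomorphism of tangent spaces. The only notable difference is that the paper appeals explicitly to \cite[Theorem~5.2]{Ghoussoub1993} (setting up a boundary set $B$ and dual superlevel set $F$) rather than to the deformation argument of \cite{JEANJEAN1997}, but the two frameworks are interchangeable here.
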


%%%%%%%%%%%%%%%%%%%%%%%%%%%%%%%%%%%%%%%%%%%%%%%%%%%%%%%%%%%%%%%%%%%%%%%%%%%%%%%%%%%%%%%
\begin{proposition} \label{prop:4}
Let $N \geq 3$.
	For any $c \in (0, c_0)$, if it holds that 
	\begin{align}\label{eqn:2.6}
	M^0(c) < m(c) + \dfrac{\mathcal{S}^{\frac{N}{2}}}{N} 
	\end{align}
	then the Palais-Smale sequence obtained in \cref{prop:3} is, up to subsequence, strongly convergent in $\Hsr{1}$. 
	%and the limit $u$ is a solution of \eqref{eqn:Laplace} with $\normLp{u}{2}^2 = c$.
\end{proposition}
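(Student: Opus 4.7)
The plan is to exploit the compact embedding $\Hsr{1} \hookrightarrow L^q(\R^N)$ for $2 < q < 2^*$ together with a Brezis--Lieb decomposition, using \eqref{eqn:2.6} as a sharp threshold that prevents a critical-Sobolev bubble from escaping to infinity in scale.

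First I would establish boundedness of $(u_n)$ in $\Hsr{1}$. Eliminating the $L^{2^*}$-term from $F_{\mu}(u_n) \to M^0(c)$ and $Q_{\mu}(u_n) \to 0$ gives $F_{\mu}(u_n) - \tfrac{1}{2^*}Q_{\mu}(u_n) = \tfrac{1}{N}\normLp{\diff u_n}{2}^2 - \mu\bigl(\tfrac{1}{q} - \tfrac{\gamma_q}{2^*}\bigr)\normLp{u_n}{q}^q$; since $\gamma_q < 1$ (because $q < 2 + 4/N$), Gagliardo--Nirenberg together with the fixed mass absorbs the $L^q$-term. Up to a subsequence $u_n \weakto u$ in $\Hsr{1}$, $u_n \to u$ in $L^q(\R^N)$ by radial compactness, and pointwise a.e. The standard Lagrange multiplier rule for constrained Palais--Smale sequences produces $\lambda_n \in \R$ with $-\laplace u_n - \lambda_n u_n - \mu|u_n|^{q-2}u_n - |u_n|^{2^*-2}u_n \to 0$ in $H^{-1}(\R^N)$. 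Testing against $u_n$ and subtracting $Q_{\mu}(u_n) \to 0$ yields $\lambda_n c = -\mu(1-\gamma_q)\normLp{u_n}{q}^q + o(1)$, so $\lambda_n \to \lambda := -\mu(1-\gamma_q)\normLp{u}{q}^q/c \leq 0$, and passing to the limit in the equation shows $u$ satisfies \eqref{eqn:Laplace} for this $\lambda$; in particular $Q_{\mu}(u) = 0$ by the Pohozaev identity.

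Next I would first rule out $u \equiv 0$. In that case $\normLp{u_n}{q}^q \to 0$, so $Q_{\mu}(u_n) \to 0$ and the energy identity give $\normLp{\diff u_n}{2}^2 - \normLp{u_n}{2^*}^{2^*} \to 0$ and $\tfrac{1}{N}\normLp{\diff u_n}{2}^2 \to M^0(c)$. By the mountain-pass structure $M^0(c) \geq \inf_{\partial V(c)} F_{\mu} > 0$ (every path in $\Gamma^0(c)$ is forced to exit $V(c)$ through $\partial V(c)$ since $2m(c) < m(c) = \inf_{V(c)} F_{\mu}$), hence the Sobolev inequality forces $NM^0(c) \geq \mathcal{S}^{N/2}$, contradicting \eqref{eqn:2.6} because $m(c) < 0$. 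With $u \not\equiv 0$ we now have $\lambda < 0$. Setting $w_n := u_n - u$, Brezis--Lieb combined with strong $L^q$-convergence converts $Q_{\mu}(u_n) - Q_{\mu}(u) \to 0$ into $\normLp{\diff w_n}{2}^2 - \normLp{w_n}{2^*}^{2^*} \to 0$, while subtracting the equations for $u_n$ and $u$ and testing the difference against $w_n$ (all $L^q$ cross-terms vanish by strong $L^q$-convergence, and the critical-power cross-term reduces to $\normLp{w_n}{2^*}^{2^*} + o(1)$ by Brezis--Lieb) yields $\lambda_n\normLp{w_n}{2}^2 = o(1)$, so $w_n \to 0$ in $L^2$. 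Thus $u \in S(c)$ with $Q_{\mu}(u) = 0$, so $u \in \slc = \slp \cup \slm$, which gives $F_{\mu}(u) \geq m(c)$: on $\slp$ this is the ground-state characterization $m(c) = \inf_{\slp} F_{\mu}$, and on $\slm$ we have $F_{\mu}(u) > 0 > m(c)$. Writing $\normLp{\diff w_n}{2}^2 \to \ell$, the Sobolev inequality forces the dichotomy $\ell = 0$ or $\ell \geq \mathcal{S}^{N/2}$, and the Brezis--Lieb energy expansion gives $M^0(c) = F_{\mu}(u) + \tfrac{\ell}{N}$. The second alternative would yield $M^0(c) \geq m(c) + \mathcal{S}^{N/2}/N$, contradicting \eqref{eqn:2.6}. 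Hence $\ell = 0$ and $u_n \to u$ strongly in $\Hsr{1}$.

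The main obstacle is securing strong $L^2$-convergence of the residual $w_n$: this is what upgrades the partial compactness coming from the radial embedding to full $\Hsr{1}$-convergence, and it rests on having $\lambda < 0$, hence on $u \not\equiv 0$. Ruling out $u \equiv 0$ is where both the mountain-pass lower bound $M^0(c) > 0$ and the threshold \eqref{eqn:2.6} are genuinely used, and once $u \not\equiv 0$ the nonvanishing of the subcritical $L^q$-mass of $u$ forces $\lambda < 0$. The final comparison then exploits both the characterization $m(c) = \inf_{\slp} F_{\mu}$ and the strict sign of $F_{\mu}$ on $\slm$ to bound $F_{\mu}(u) \geq m(c)$ directly on $\slc$, without invoking additional monotonicity of $c \mapsto m(c)$.
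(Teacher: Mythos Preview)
Your argument is correct and follows the same three-stage architecture as the paper (boundedness via $Q_\mu$ and Gagliardo--Nirenberg; exclusion of the trivial weak limit via the Sobolev dichotomy and $M^0(c)>0$; Brezis--Lieb splitting and a second dichotomy). The one substantive difference lies in how you secure $F_{\mu}(u)\geq m(c)$. The paper, at the moment of the dichotomy, only knows $\normLp{u}{2}^2\leq c$ and therefore appeals to the monotonicity of $c\mapsto m(c)$ (\cref{maps}) to obtain $F_{\mu}(u)\geq m(\normLp{u}{2}^2)\geq m(c)$; strong $L^2$-convergence is established only afterwards, once the bubble has been excluded. You instead extract $\lambda_n\normLp{w_n}{2}^2=o(1)$ \emph{before} the dichotomy, by combining the tested difference of the equations with the Brezis--Lieb identity $\normLp{\diff w_n}{2}^2-\normLp{w_n}{2^*}^{2^*}=o(1)$, and since $\lambda_n\to\lambda<0$ this forces $w_n\to 0$ in $L^2$; hence $u\in S(c)$ lies on $\slc=\slp\cup\slm$ and the bound $F_{\mu}(u)\geq m(c)$ follows directly from the decomposition. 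Your route is slightly more self-contained (it does not borrow the monotonicity lemma from \cite{JeanjeanJendrejLeVisciglia2020}), while the paper's route postpones the use of $\lambda<0$ to the very end. One small imprecision: the inequality relevant for absorbing the $L^q$-term in Step~1 is $q\gamma_q<2$ (equivalent to $q<2+4/N$), not $\gamma_q<1$ (which is merely $q<2^*$); this does not affect the validity of the argument.
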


%%%%%%%%%%%%%%%%%%%%%%%%%%%%%%%%%%%%%%%%%%%%%%%%%%%%%%%%%%%%%%%%%%%%%%%%%%%%%%%%%%%%%%%

\begin{proposition} \label{prop:44}
	For any $c \in (0, c_0)$, if $N \geq 4$ it holds that
	\begin{align*}
	M^0(c) < m(c) + \dfrac{\mathcal{S}^{\frac{N}{2}}}{N}.
	\end{align*}
\end{proposition}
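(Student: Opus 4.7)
The plan is to construct, for each $c \in (0, c_0)$ with $N \geq 4$, an explicit test path $g \in \Gamma^0(c)$ whose maximum energy lies strictly below $m(c) + \mathcal{S}^{N/2}/N$, through a Brezis--Nirenberg-type construction in the normalized radial setting. The test function will combine a radial ground state of slightly smaller mass with a Talenti bubble, and the path itself will be generated by the mass-preserving dilation $u_\lambda(x) := \lambda^{N/2}u(\lambda x)$.

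For the setup, I would pick an auxiliary sequence $c_n \uparrow c$ with corresponding radial ground states $u_{c_n} \in S_r(c_n)$, and a family of truncated Aubin--Talenti extremals $U_\epsilon$ centered at the origin, whose standard asymptotic estimates are recalled in \cref{estimates-U}. I then form
$$\Psi_\epsilon := u_{c_n} + \alpha_\epsilon U_\epsilon,$$
with $\alpha_\epsilon > 0$ and $n = n(\epsilon)$ chosen so that $\normLp{\Psi_\epsilon}{2}^2 = c$ exactly (matching $c - c_n$ to the sum of the bubble's mass and the cross term with $u_{c_n}$); by radiality of both ingredients, $\Psi_\epsilon \in S_r(c)$. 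A standard fibering analysis shows that $\lambda \mapsto F_\mu((\Psi_\epsilon)_\lambda)$ possesses a unique local minimum at some $\lambda_- > 0$ (with $F_\mu < 0$, giving a point of $\slpr$) and a unique local maximum at $\lambda_+ > \lambda_-$ (with $F_\mu > 0$, giving a point of $\slmr$), after which $F_\mu \to -\infty$ as $\lambda \to \infty$ since $q\gamma_q < 2 < 2^*$. I then define
$$g(t) := (\Psi_\epsilon)_{\lambda_- + t}, \qquad t \geq 0,$$
so that $g(0) \in \slpr$ and $g(t) \in E_c$ for $t$ sufficiently large; hence $g \in \Gamma^0(c)$.

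The core of the proof is the energy estimate along this path. The maximum of $F_\mu \circ g$ is attained at $\lambda_+$, with value $F_\mu((\Psi_\epsilon)_{\lambda_+})$. A standard fibering identity expresses this maximum as an explicit polynomial combination of $\normLp{\diff \Psi_\epsilon}{2}^2$, $\normLp{\Psi_\epsilon}{q}^q$, and $\normLp{\Psi_\epsilon}{2^*}^{2^*}$. Expanding each such norm in terms of $u_{c_n}$ and $U_\epsilon$, using the critical-point identity $Q_\mu(u_{c_n}) = 0$, the asymptotics of \cref{estimates-U}, and the observation that cross terms between the $L^\infty_{loc}$-regular ground state $u_{c_n}$ and the concentrating bubble $U_\epsilon$ are of lower order, I would reach
$$F_\mu((\Psi_\epsilon)_{\lambda_+}) \leq m(c_n) + \frac{\mathcal{S}^{N/2}}{N} - \delta_\epsilon,$$
with a strictly positive correction $\delta_\epsilon > 0$ for $\epsilon$ small. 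Since $m(c_n) \to m(c)$ by \cref{maps}, taking the infimum over $\Gamma^0(c)$ yields the claim.

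The main obstacle is precisely the strictly positive gain $\delta_\epsilon$. To leading order in $\epsilon$ it is produced by the subcritical contribution $-(\mu/q)\normLp{U_\epsilon}{q}^q$, scaling like $\epsilon^{N-q(N-2)/2}$, while the positive error from the truncation of the Talenti profile and from the $L^{2^*}$-cross terms scales like $\epsilon^{N-2}$. The gain therefore survives precisely when $q > 4/(N-2)$; combined with the mass-subcritical assumption $q < 2 + 4/N$ this forces $N \geq 4$, with the borderline case $N = 4$ requiring the familiar logarithmic bookkeeping. This is exactly the dimensional restriction appearing in the statement of \cref{prop:44}, and the reason the construction breaks down in dimension three.
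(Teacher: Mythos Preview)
Your construction has a genuine gap at its core step. The path $\lambda \mapsto (\Psi_\varepsilon)_\lambda$ dilates $u_{c_n}$ and $U_\varepsilon$ \emph{simultaneously}, so there is no natural splitting of the maximal energy into ``ground-state part $=m(c_n)$'' plus ``bubble part $\leq \mathcal{S}^{N/2}/N - \delta_\varepsilon$''. At the maximum $\lambda_+$, the contribution of the ground-state component is $F_\mu\big((u_{c_n})_{\lambda_+}\big)$, not $m(c_n)$; and $\lambda_+$ need not be close to the local-minimum scale $s^-_{u_{c_n}}=1$ (in fact it cannot be, since $\lambda_+$ is determined by the global maximum of the fiber of the \emph{sum}). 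Your ``standard fibering identity'' gives the maximum as a function of $\normLp{\nabla\Psi_\varepsilon}{2}^2$, $\normLp{\Psi_\varepsilon}{q}^q$, $\normLp{\Psi_\varepsilon}{2^*}^{2^*}$, but with three different scaling powers ($2$, $q\gamma_q$, $2^*$) there is no closed-form expression, and the claimed inequality does not follow from the expansions you list. Equally problematic is the assertion that the cross terms are ``of lower order'': with both $u_{c_n}$ and $U_\varepsilon$ centred at the origin, the gradient cross term $\int\nabla u_{c_n}\cdot\nabla U_\varepsilon$ is of order $\varepsilon^{(N-2)/2}$, which is \emph{larger} than both the truncation error $O(\varepsilon^{N-2})$ and, for a range of $q$, the subcritical gain $\varepsilon^{N-q(N-2)/2}$. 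These interaction terms cannot simply be discarded.

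The paper's route is quite different and was designed precisely to sidestep this interaction. It first proves the structural inequality $M^0(c)\leq M(c)$ (\cref{prop:6}), where $M(c)$ is a mountain-pass value over \emph{non-radial} paths; this requires the decomposition $\Lambda(c)=\Lambda^-(c)\cup\Lambda^+(c)$ and the auxiliary set $W(c)$ of \cref{lemma:15}, together with a symmetrisation argument on $\Lambda^+(c)$. Then, to bound $M(c)$, it builds the test path $t\mapsto u_{c_n}(\cdot - y_n)+tU_{\varepsilon_n}$ with $y_n$ chosen so that the translated ground state and the bubble are essentially disjoint (\cref{lemma:9LL,lemma:9LLL}); along this path the energy splits cleanly via \cref{lemma:7L} into $m(c_n)$ plus a bubble functional in $t$, whose maximum is then estimated by \cref{estimates-U}. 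The mass defect is absorbed by \cref{lemma:6} and the non-constraint-preserving path is pushed back onto $S(c)$ by the dilation of \cref{lemma:4}. A radial approach \emph{can} be made to work (see the Addendum and \cref{lien-tarantello}), but it goes in the opposite direction: one exploits, rather than neglects, the overlap cross terms in the $L^{2^*}$ expansion to generate the gain---a substantially more delicate computation than what you sketch.
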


\begin{remark}\label{R-I-1}
	If, as a consequence of Ekeland variational principle, the geometry of the mountain pass implies the existence of a Palais-Smale sequence (a PS sequence for short) at the mountain pass Energy level it is now a well-identified difficulty that such sequences may not be bounded.  To obtain a bounded PS sequence one needs to explicit a sequence having additional properties. The condition that $Q_\mu(u_n) \to 0$ as $n \to \infty$, incorporated into the variational procedure the information that any solution must satisfy the Pohozaev type identity $Q_{\mu}(u)=0$, see \cite{JEANJEAN1997} in that direction.
\end{remark}

\begin{remark}\label{R-I-2}
	To establish \cref{prop:4}, we shall make use of arguments first presented in \cite[Proposition 3.1]{Soave2020Sobolevcriticalcase}. It is important to notice that the strong convergence is only obtained by working in $H_r^1(\R^N)$. Indeed the strong convergence in $L^q(\R^N)$ of any weakly converging sequence in $H^1_{r}(\R^N)$ is crucially used.
\end{remark}

The proof of \cref{prop:44}, which is the heart of the paper, can be divided into two parts whose proofs require different types of arguments. 
Let  \begin{align*}
M(c) := \inf_{h \in \Gamma(c)} \max_{t \in [0,\infty)} F_{\mu}(h(t)) 
\end{align*}
where
\begin{align*}
\Gamma(c) := \{h \in \mathit{C}([0,\infty), S(c)) : h(0) \in V(c) \cap \{ u: F_{\mu}(u) <0\}, \exists t_h \mbox{ s.t. } h(t) \in E_c \, \forall t \geq t_h\}.
\end{align*}
\begin{proposition} \label{prop:6}
Let $N \geq 3$.
	For any $c \in (0, c_0)$,  it holds that
	\begin{align*}
	M^0(c) \leq M(c).
	\end{align*}
\end{proposition}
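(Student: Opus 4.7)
The plan is to show that every path $h \in \Gamma(c)$ can be dominated by a radial path $g \in \Gamma^0(c)$ whose maximum of $F_\mu$ does not exceed that of $h$; taking infima over $h \in \Gamma(c)$ then yields $M^0(c) \leq M(c)$.

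Given $h \in \Gamma(c)$, I first apply pointwise Schwarz rearrangement, setting $\tilde h(t) := (|h(t)|)^*$. Classical rearrangement inequalities give $\tilde h(t) \in S_r(c)$, $\normLp{\tilde h(t)}{r} = \normLp{h(t)}{r}$ for $r \in \{q, 2^*\}$, and $\normLp{\diff \tilde h(t)}{2} \leq \normLp{\diff h(t)}{2}$ (Polya--Szego), so $F_\mu(\tilde h(t)) \leq F_\mu(h(t))$ for every $t \geq 0$; in particular $\tilde h(0) \in V(c) \cap \{F_\mu < 0\}$ and $\tilde h(t) \in E_c$ for $t \geq t_h$. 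To meet the starting requirement $g(0) \in \slpr$, I exploit the fiber map $\psi_u(s) := F_\mu(s \star u)$ with $u := \tilde h(0)$: its structure, already analyzed earlier in the paper on $V(c) \cap \{F_\mu < 0\}$, provides a unique scale $\spu$ such that $\spu \star u \in \slpr$ and $\psi_u(s) \leq \psi_u(0) = F_\mu(u)$ for every $s$ on the segment joining $\spu$ to $0$. Prepending this scaling segment, parametrized on $[0,1]$ by $t \mapsto ((1-t)\,\spu) \star u$, and continuing via $g(1+t) := \tilde h(t)$ for $t \geq 0$, produces a candidate $g : [0,\infty) \to S_r(c)$ with $g(0) \in \slpr$, $g(t) \in E_c$ for $t \geq t_h + 1$, and
\begin{equation*}
\max_{t \geq 0} F_\mu(g(t)) \;\leq\; \max_{t \geq 0} F_\mu(\tilde h(t)) \;\leq\; \max_{t \geq 0} F_\mu(h(t)).
\end{equation*}

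The principal obstacle is the continuity of $t \mapsto \tilde h(t)$ in $\Hs{1}$: although $u \mapsto |u|^*$ is continuous in every $L^p$-norm, its $H^1$-continuity is well known to fail in general. I would resolve this via an approximation argument. Given $\epsilon > 0$, pick $h \in \Gamma(c)$ with $\max_t F_\mu(h(t)) < M(c) + \epsilon$, sample $h$ on a sufficiently fine partition $0 = t_0 < \cdots < t_K = t_h$, rearrange the samples $\tilde h(t_i)$, and reconnect consecutive radial vertices by $L^2$-normalized linear interpolations in $\Hs{1}$, which remain in $S_r(c)$ and are continuous in $\Hs{1}$. Using the uniform continuity of $h$ in $\Hs{1}$, the pointwise $L^p$-continuity of rearrangement, and a uniform $H^1$-bound along the path, one checks that the extra cost in $\sup F_\mu$ along these bridges is $o(1)$ as the mesh shrinks. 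After prepending the scaling segment onto $\slpr$ as above, the resulting radial path lies in $\Gamma^0(c)$ and yields $M^0(c) \leq M(c) + C\epsilon$; letting $\epsilon \to 0$ completes the proof.
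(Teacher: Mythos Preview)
Your strategy is fundamentally different from the paper's, and the part you label ``one checks'' is precisely the difficulty the authors chose to avoid. The paper explicitly warns (just before stating \cref{prop:6}) that the Schwarz rearrangement $u\mapsto |u|^*$ is \emph{not} continuous from $H^1_+(\R^N)$ to itself when $N\geq 2$, so ``replacing a given path by a path which would be a Schwarz rearrangement (element by element)'' is not available. You acknowledge this obstacle, but your proposed repair---sample, rearrange the samples, then bridge by $L^2$-normalized linear interpolation---is asserted rather than proved. You would have to show, uniformly over all bridges and as the mesh shrinks, that (a) the normalization factor stays close to $1$, (b) the $L^q$ and $L^{2^*}$ norms along the bridge remain $o(1)$-close to those at the mesh points, and (c) the Dirichlet energy along a convex combination is controlled by the endpoints. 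These are all plausible (convexity of $\|\nabla\cdot\|_2$ handles (c), and $L^p$-continuity of rearrangement plus uniform $H^1$-bounds handle (a)--(b)), but none of it is written, and it is exactly here that the work lies. As presented, this is a gap.

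There is also a slip in your prepended segment: you parametrize it by $t\mapsto ((1-t)\spu)\star u$, which at $t=1$ gives the degenerate scaling $0\star u$, not $u=1\star u$; and you write ``$\psi_u(s)\leq \psi_u(0)=F_\mu(u)$'', where $\psi_u(0)$ should read $\psi_u(1)$. The intended segment from $\spu$ to $1$ does work (since $\psi_u$ is monotone between consecutive critical points and $1<\smu$ by \cref{lemma:15}\ref{point:lemma15.3}), but the write-up needs fixing.

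By contrast, the paper never rearranges a path. It proves the chain
\[
M(c)\;\geq\;\inf_{\slm}F_\mu\;\geq\;\inf_{\slmr}F_\mu\;\geq\;M^0(c)
\]
in three steps: (i) every $h\in\Gamma(c)$ must cross $\partial W(c)=\slm$ because $h(0)\in W(c)$ while $h(t)\notin W(c)$ for large $t$, using the $C^1$ map $u\mapsto \smu$ from \cref{lemma:14}; (ii) a \emph{single} Schwarz rearrangement of a point $u\in\slm$, followed by rescaling onto $\slmr$, decreases the fiber-map maximum; (iii) for each $u\in\slmr$ the explicit scaling path $t\mapsto u_{(1-t)\spu+ts_1}$ lies in $\Gamma^0(c)$ and has maximum $F_\mu(u)$. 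This route reduces everything to pointwise rearrangement, sidestepping the $H^1$-continuity issue entirely and yielding, as a by-product, the useful identity $M^0(c)=M(c)=\inf_{\slm}F_\mu$.
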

\begin{proposition}\label{prop:5}
	For any $c \in (0, c_0)$, if $N \geq 4$ we have that
	\begin{align}\label{belowbelow}
	M(c) < m(c) + \dfrac{\mathcal{S}^{\frac{N}{2}}}{N}.
	\end{align}
\end{proposition}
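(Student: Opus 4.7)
The strategy is to construct an explicit path $h_\varepsilon\in\Gamma(c)$, combining a ground state at a nearby mass with a shifted truncated Aubin--Talenti bubble, and then to propagate it along the $L^2$-preserving dilation $(s\star u)(x):=s^{N/2}u(sx)$, obtaining a path in $S(c)$ whose energy peak is strictly below $m(c)+\mathcal{S}^{N/2}/N$.

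\textbf{Test path.} For small $\varepsilon>0$ let $U_\varepsilon^\eta$ be the standard truncated Aubin--Talenti extremal. Choose $c_\varepsilon<c$ with $c-c_\varepsilon$ of order $\|U_\varepsilon^\eta\|_2^2$, pick $x_\varepsilon\in\R^N$ with $|x_\varepsilon|\to\infty$ fast enough to render all $L^p$-cross products ($p=2,q,2^*$) between $u_{c_\varepsilon}$ and $U_\varepsilon^\eta(\cdot-x_\varepsilon)$ negligible, and set
\begin{equation*}
v_\varepsilon:=u_{c_\varepsilon}(\cdot)+U_\varepsilon^\eta(\cdot-x_\varepsilon)\in S(c)
\end{equation*}
(after a harmless normalization of order $1+o(1)$), where $u_{c_\varepsilon}\in V(c_\varepsilon)$ is a ground state from \cref{thm-1}. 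By \cref{maps}, $c_\varepsilon\to c$ forces $m(c_\varepsilon)\to m(c)$. The function
\begin{equation*}
\Phi_\varepsilon(s):=F_\mu(s\star v_\varepsilon)=\tfrac{s^2}{2}\|\nabla v_\varepsilon\|_2^2-\tfrac{\mu s^{q\gamma_q}}{q}\|v_\varepsilon\|_q^q-\tfrac{s^{2^*}}{2^*}\|v_\varepsilon\|_{2^*}^{2^*}
\end{equation*}
satisfies $\Phi_\varepsilon(0^+)=0^-$ and $\Phi_\varepsilon(+\infty)=-\infty$ (using $q\gamma_q<2<2^*$), and has a unique interior maximum at some $s_\varepsilon^*>0$. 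After a reparametrization into $t\in[0,\infty)$, the resulting path lies in $\Gamma(c)$: at small $s$ the gradient norm lies below $\rho_0$ and the energy is negative, while for large $t$ the energy drops below $2m(c)$.

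\textbf{Sharp asymptotic of the peak.} The identity $\Phi_\varepsilon'(s_\varepsilon^*)=0$ rearranges into
\begin{equation*}
\max_{s>0}\Phi_\varepsilon(s)=\mu(s_\varepsilon^*)^{q\gamma_q}\|v_\varepsilon\|_q^q\Bigl(\tfrac{\gamma_q}{2}-\tfrac{1}{q}\Bigr)+\tfrac{(s_\varepsilon^*)^{2^*}}{N}\|v_\varepsilon\|_{2^*}^{2^*},
\end{equation*}
whose first term is \emph{strictly negative} since $q<2+4/N$. The disjoint-support decomposition yields the quasi-orthogonalities $\|\nabla v_\varepsilon\|_2^2=\|\nabla u_{c_\varepsilon}\|_2^2+\|\nabla U_\varepsilon^\eta\|_2^2+o(1)$, and similarly for $\|v_\varepsilon\|_{2^*}^{2^*}$ and $\|v_\varepsilon\|_q^q$; combined with the standard expansions $\|\nabla U_\varepsilon^\eta\|_2^2=\mathcal{S}^{N/2}+O(\varepsilon^{N-2})$ and $\|U_\varepsilon^\eta\|_{2^*}^{2^*}=\mathcal{S}^{N/2}+O(\varepsilon^N)$, one finds $s_\varepsilon^*\to 1$ and an estimate of the form
\begin{equation*}
\max_{s>0}\Phi_\varepsilon(s)\leq m(c_\varepsilon)+\tfrac{\mathcal{S}^{N/2}}{N}+O(\varepsilon^{N-2})-\kappa\mu\|U_\varepsilon^\eta\|_q^q
\end{equation*}
for some $\kappa>0$, where $\|U_\varepsilon^\eta\|_q^q\asymp\varepsilon^{\alpha}$ with exponent $\alpha=N-q(N-2)/2\in(4/N,2)$. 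Moreover $m(c_\varepsilon)-m(c)\geq 0$ by monotonicity, and quantitatively $m(c_\varepsilon)-m(c)=O(c-c_\varepsilon)=O(\|U_\varepsilon^\eta\|_2^2)$ via a standard continuity-plus-scaling control on $m$.

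\textbf{Main obstacle and the role of $N\geq 4$.} The conclusion rests on the negative $L^q$-gain $\kappa\mu\varepsilon^\alpha$ dominating both the Sobolev remainder $O(\varepsilon^{N-2})$ and the mass-correction $O(\|U_\varepsilon^\eta\|_2^2)$. For $N\geq 5$ one has $\|U_\varepsilon^\eta\|_2^2=O(\varepsilon^2)$, and for $N=4$, $O(\varepsilon^2|\log\varepsilon|)$; since $\alpha<2$ in both cases, $\varepsilon^\alpha$ beats both remainders, and we obtain $M(c)\leq\max_{s>0}\Phi_\varepsilon(s)<m(c)+\mathcal{S}^{N/2}/N$, i.e.\ \eqref{belowbelow}. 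The crux is therefore the bookkeeping of these expansions---in particular the precise interaction between the ground-state piece $u_{c_\varepsilon}$ and the bubble $U_\varepsilon^\eta(\cdot-x_\varepsilon)$---which must be arranged to expose the sign unambiguously. In dimension $N=3$ this breaks down: $\|U_\varepsilon^\eta\|_2^2=O(\varepsilon)$ cannot be absorbed by $\varepsilon^\alpha$ with $\alpha>1$, which is exactly the dimensional restriction in \cref{prop:5}.
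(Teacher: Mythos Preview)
Your approach differs from the paper's in a key structural way: you propagate the composite test function $v_\varepsilon=u_{c_\varepsilon}+U_\varepsilon^\eta(\cdot-x_\varepsilon)$ along the $L^2$-preserving dilation $s\mapsto s\star v_\varepsilon$, whereas the paper uses the \emph{linear} path $t\mapsto u_{c_n}(\cdot-y_n)+tU_{\varepsilon_n}$, keeping the ground state fixed and varying only the bubble coefficient. This matters. With the linear path, the elementary pointwise inequality $(a+b)^p\geq a^p+b^p$ for $a,b\geq 0$, $p\geq 1$, gives at once
\begin{align*}
F_\mu(u+tU_\varepsilon)\leq F_\mu(u)+t\int_{\R^N}\nabla u\cdot\nabla U_\varepsilon\,dx+\Big(\tfrac{t^2}{2}\|\nabla U_\varepsilon\|_2^2-\tfrac{\mu t^q}{q}\|U_\varepsilon\|_q^q-\tfrac{t^{2^*}}{2^*}\|U_\varepsilon\|_{2^*}^{2^*}\Big),
\end{align*}
so the ground state contributes \emph{exactly} $F_\mu(u_{c_n})=m(c_n)$, and one only has to estimate the bracketed one-variable function of $t$ together with the gradient cross term, the latter being made $\leq\|U_\varepsilon\|_2^2$ by a suitable translation. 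The paper moreover works with paths whose mass ranges in $[c/2,c]$ and then projects onto $S(c)$ by a dilation in the space variable that does not increase the energy, thereby avoiding your ad hoc normalization.

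With your dilation path the situation is more delicate, and this is where your sketch has a genuine gap. Since $s\star v_\varepsilon$ dilates both pieces simultaneously, the ground-state part contributes $\psi_{u_{c_\varepsilon}}(s_\varepsilon^*)$ rather than $m(c_\varepsilon)$. Now $s=1$ is a \emph{local minimum} of $\psi_{u_{c_\varepsilon}}$ (because $u_{c_\varepsilon}\in\Lambda^-(c_\varepsilon)$), so $\psi_{u_{c_\varepsilon}}(s_\varepsilon^*)\geq m(c_\varepsilon)$ with excess of order $(s_\varepsilon^*-1)^2\,\psi_{u_{c_\varepsilon}}''(1)$, a \emph{positive} quantity working against you. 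To absorb it into the negative gain $-\kappa\mu\|U_\varepsilon^\eta\|_q^q\sim\varepsilon^\alpha$ you would need $|s_\varepsilon^*-1|=o(\varepsilon^{\alpha/2})$, which via the critical-point equation requires a uniform negative upper bound on $\psi_{v_\varepsilon}''$ near $s=1$. But $\psi_{v_\varepsilon}''(1)=\psi_{u_{c_\varepsilon}}''(1)-(2^*-2)\mathcal{S}^{N/2}+o(1)$ with $\psi_{u_{c_\varepsilon}}''(1)>0$, and whether the second term dominates the first is not addressed---it depends on the size of $\rho_0$ relative to $\mathcal{S}^{N/2}$. Your exact formula for $\max_s\Phi_\varepsilon(s)$ in terms of $s_\varepsilon^*$ is correct, but extracting from it the displayed upper bound with the clean $m(c_\varepsilon)$ term and the surviving $-\kappa\mu\|U_\varepsilon^\eta\|_q^q$ is precisely the ``bookkeeping'' you acknowledge as the crux yet do not perform. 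The paper's linear parametrisation sidesteps this issue entirely, since the ground state is never moved off its own scale.
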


Even if the conclusion of \cref{prop:6}  may somehow been expected, the proof of this result is rather involved. Due to the fact  that the symmetric rearrangement map is not continuous from $H_+^1(\R^N)$, the subspace of non negative functions in $\Hs{1}$,
to $H_+^1(\R^N)$ if $N \geq 2$, see \cite{AlmgrenLieb-1,AlmgrenLieb-2}, it is not possible to replace a given path (of non negative functions which is not restrictive) by a path which would be a Schwarz rearrangement (elements by elements) of the initial path, see \cite[Remark 5.2]{Soave2020} for a discussion in that direction. Actually, if the strict inequality of (\ref{well}) guarantees that the functional has a mountain pass geometry, it is not a sufficient information to prove that $M^0(c) \leq M(c)$. 	A better understanding of the geometry of the functional $F_{\mu}$ is required and, for this purpose, we  introduce a set $W(c)$, directly connected with the decomposition $\Lambda(c) = \Lambda^+(c) \cup \Lambda^-(c)$ and study its relation with $V(c)$, see \cref{lemma:15}.\smallskip

Note that we need to prove that $M^0(c) \leq M(c)$ because, on one hand the compactness of the Palais-Smale sequence at the mountain pass level can only be obtained by working in $\Hsr{1}$, on the other hand to show the strict inequality in \cref{prop:5} we need to work with testing functions, testing paths actually, which are not radial.  The idea of using non-radial test functions to estimate a mountain pass level defined on a radial space seems to be new.  

\begin{remark}\label{R-I-3}
It is only in  \cref{prop:5} that appears the need to restrict ourselves to $N\geq 4$ in \cref{thm-main}, \cref{thm-additional}  and \cref{thm-secondary}. It is not clear to us if this limitation is due to the approach we have developed or if the case $N=3$ is fundamentally distinct from the case $N\geq 4$.  We believe it would be interesting to investigate in that direction.
\end{remark}

%\begin{remark}\label{R-I-4}
%We do not study in the paper the issue of the orbital stability of the mountain pass solution. Very likely, following the approach proposed in \cite{Soave2020, %Soave2020Sobolevcriticalcase} one will arrived at the conclusion that the standing wave associated to $v_c$ is unstable by blow-up.  In any case it is something which% is expect for solutions having this type of variational characterization.
%\end{remark}

The paper is organized as follows. \cref{Section2} is devoted to some preliminaries. In particular we clarify the structure of the set $\Lambda(c)$ and introduce our set $W(c)$ which will prove essential in the proof of \cref{prop:6}. The proof of \cref{prop:3}, \cref{prop:4} and \cref{prop:44} are given in \cref{Section3}, 
\cref{Section4} and  \cref{Section5} respectively.  In \cref{Section6} we give the proofs of \cref{thm-main}, \cref{thm-additional}  and \cref{thm-secondary}.  Finally \cref{Section7} presents some estimates on testing functions. \medskip

{\bf Notation :}
% and $\Hor$ for the subspace of real valued functions $\mathit{H}^1(\Rn, \R)$. 
For $p \geq 1$, the $L^p$-norm of $u \in H^1(\R^N)$  is denoted by $\normLp{u}{p}$. 
%We denote by $\Hsr{1}$ the subspace of functions in $\Hs{1}$ which are radially symmetric with respect to $0$, and we define $S_r(c) := S(c) \cap \Hsr{1}$. 
 We denote by $\Rp$ the interval $(0, \infty)$. 

{\bf Addendum :} Very recently in \cite{Wei-Wu2021}, the conclusions of \cref{thm-main} were extended to hold when $N=3$, answering thus the question raised in \cref{R-I-3}. The approach of \cite{Wei-Wu2021} differs from ours by the choice of the testing paths used to prove the strict inequality \eqref{belowbelow}. The choice done in \cite{Wei-Wu2021} is more in the spirit of the one of \cite{Tarantello92},  see  \cref{lien-tarantello}.  
%Note also that extensions of \cref{thm-additional}\ref{point:thm-additional:1} are given in \cite{Wei-Wu2021}.}

%\mycomment{I don't have any experience concerning this problem. However, I think that all Referees had read \cite{Wei-Wu2021}. I believe that if Referees agree to publish then the reason is new interesting results in our paper, this does not depend on the results in \cite{Wei-Wu2021}. I know that the results in \cite{Wei-Wu2021} are better than our results, however, we have some good lemmas that can be applied to other problems.}

%%%%%%%%%%%%%%%%%%%%%%%%%%%%%%%%%%%%%%%\input{Preliminary}%%%%%%%%%%%%%%%%%%%%%%%%%%%%%%%%%%%%%%%%%%%%%%%%%%%%%%%%%%%%%%%
\section{Preliminary results}\label{Section2}

We shall make use of the following classical inequalities :
For any $N \geq 3$  there exists an optimal constant
$\mathcal{S} > 0$ depending only on $N$, such that
\begin{align}\label{Sobolev-I}
\mathcal{S} \norm{f}_{2^*}^{2} \leq \norm{\diff f}_{2}^{2} , \qquad \forall f \in \dot{\mathit{H}}^1(\R^N), \quad \mbox{(Sobolev inequality)}
\end{align}
see \cite[Theorem IX.9]{Brezis1983}. 
If
$N \geq 2$ and $p \in [2, \frac{2N}{N-2})$ then
\begin{align}\label{Gagliardo-Nirenberg-I}
\norm{f}_{p} \leq C_{N,p} \norm{\diff f}_{2}^{\beta} \norm{f}_{2}^{(1-\beta)}, \qquad \mbox{with } \beta = N\(\dfrac{1}{2} - \dfrac{1}{p}\) \quad \mbox{(Gagliardo-Nirenberg inequality),}
\end{align}
for all  $f \in H^1(\R^N)$, see \cite{Nirenberg1959}.

%For $u \in \Hs{1}$, we denote by $|u|^{\star}$ the Schwartz rearrangement of $|u|$ (see \cite[Chapter 3]{LiebLoss2001}). It is well know that $u^{\star}$ is a positive, radially symmetric decreasing function and 
%\begin{align*}
%\normLp{|u|^{\star}}{q} = \normLp{u}{q} \mbox{ for } q\geq 1 \quad \mbox{and} \quad \normLp{\diff (|u|^{\star})}{2} \leq \normLp{\diff u}{2}.
%\end{align*}
%Therefore, we obtain that $F_{\mu}(|u|^{\star}) \leq F_{\mu}(u)$.

%%%%%%%%%%%%%%%%%%%%%%%%%%%%%%%%%%%%%%%%%%%%%%%%%%%%%%%%%%%%%%%%%%%%%%%%%%%%%%%%%%%%%%%
\begin{proposition} \label{prop:1}
Let $N \geq 3$.
	For any $\mu >0$ and any $c \in (0, c_0)$, $m(c)$ is reached by a positive, radially symmetric non-increasing function, denoted $u_c$ that satisfies, for a $\lambda_c \in \R$,
	\begin{equation} \label{eqn: 2.1}
	-\laplace u_c - \mu \abs{u_c}^{q-2} u_c - \abs{u_c}^{2^*-2} u_c = \lambda_c u_c \quad \mbox{in } \Rn.
	\end{equation}
\end{proposition}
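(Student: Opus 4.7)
The plan is to leverage \cref{thm-1}, which already provides a minimizer of $F_\mu$ on $V(c)$, and to upgrade it via Schwarz symmetrization to a positive, radially symmetric, radially non-increasing function, then conclude with the Lagrange multiplier rule and the strong maximum principle.

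First I would invoke \cref{thm-1} to obtain some $v_c \in V(c)$ with $F_\mu(v_c) = m(c)$. Replacing $v_c$ by $|v_c|$ keeps membership in $V(c)$, since $\normLp{\diff |v_c|}{2} \leq \normLp{\diff v_c}{2}$ while the $L^2$, $L^q$ and $L^{2^*}$ norms are preserved; moreover $F_\mu(|v_c|) \leq F_\mu(v_c)$. Next, let $u_c$ denote the Schwarz rearrangement of $|v_c|$. Equimeasurability preserves all $L^p$-norms, so $u_c \in S(c)$, while the P\'olya--Szeg\H{o} inequality gives $\normLp{\diff u_c}{2} \leq \normLp{\diff |v_c|}{2} < \sqrt{\rho_0}$, so $u_c \in V(c)$. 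Combining these facts,
\[ m(c) \leq F_\mu(u_c) \leq F_\mu(|v_c|) \leq F_\mu(v_c) = m(c), \]
showing that $u_c$ is a non-negative, radially symmetric, radially non-increasing minimizer realizing $m(c)$.

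The gap \eqref{well} then forces $\normLp{\diff u_c}{2}^2 < \rho_0$ strictly; otherwise $F_\mu(u_c) \geq \inf_{\partial V(c)} F_\mu > 0 > m(c)$, contradicting $F_\mu(u_c) = m(c)$. Thus $u_c$ lies in the open subset $V(c)$ of the $C^1$ constraint manifold $S(c)$ and is, in particular, a local minimum of $F_\mu$ on $S(c)$. The Lagrange multiplier theorem then provides $\lambda_c \in \R$ with $F_\mu'(u_c) = \lambda_c u_c$ in $H^{-1}(\R^N)$, which is exactly \eqref{eqn: 2.1}. Finally, since $u_c \geq 0$, $u_c \not\equiv 0$, and the coefficients in \eqref{eqn: 2.1} are locally bounded (by standard elliptic bootstrap on the $H^1$-solution), the strong maximum principle yields $u_c > 0$ throughout $\R^N$. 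The argument rests entirely on classical ingredients; no serious obstacle is foreseen beyond the routine verification that $V(c)$-membership is preserved under symmetrization, which the P\'olya--Szeg\H{o} inequality grants immediately.
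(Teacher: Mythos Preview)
Your proof is correct and follows essentially the same route as the paper: start from a minimizer provided by \cref{thm-1}, pass to the Schwarz rearrangement of its absolute value, check via P\'olya--Szeg\H{o} and equimeasurability that the result still lies in $V(c)$ and realizes $m(c)$, and conclude the Euler--Lagrange equation holds. Your version is in fact slightly more careful than the paper's---you explicitly verify that $u_c$ sits in the interior of $V(c)$ before invoking the Lagrange multiplier rule, and you use the strong maximum principle to upgrade non-negativity to strict positivity, whereas the paper simply asserts positivity directly from the rearrangement.
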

\begin{proof}
	We recall the result in \cite{JeanjeanJendrejLeVisciglia2020} that $m(c)$ is reached at $u_0$ that satisfies
	\begin{equation*}
	-\laplace u_0 - \mu \abs{u_0}^{q-2} u_0 - \abs{u_0}^{2^*-2} u_0 = \lambda_0 u_0 \quad \mbox{in } \Rn,
	\end{equation*}
	for a $\lambda_0 \in \R$. Now, let $u_c$ be the Schwarz rearrangement of $|u_0|$. 
	%(see \cite[Chapter 3]{LiebLoss2001}). 
	Hence $u_c$ is a positive, radially symmetric non-increasing function. We also have that
	\begin{align*}
	\normLp{u_c}{2}^2 = \normLp{u_0}{2}^2 = c, \quad \normLp{\diff u_c}{2}^2 \leq \normLp{\diff u_0}{2}^2 < \rho_0 \quad \mbox{and} \quad F_\mu(u_c) \leq F_\mu (u_0).
	\end{align*}
	This implies that $u_c \in V(c)$ and hence $F_\mu(u_c) = F_\mu (u_0)$. Thus, $m(c)$ is reached by $u_c$ that satisfies \eqref{eqn: 2.1}
	for a $\lambda_c \in \R$. 
\end{proof}
%%%%%%%%%%%%%%%%%%%%%%%%%%%%%%%%%%%%%%%%%%%%%%%%%%%%%%%%%%%%%%%%%%%%%%%%%%%%%%%%%%%%%%%

%%%%%%%%%%%%%%%%%%%%%%%%%%%%%%%%%%%%%%%%%%%%%%%%%%%%%%%%%%%%%%%%%%%%%%%%%%%%%%%%%%%%%%%
%Note that we can deduce from \cref{prop:1} that the set $\mathcal{M}_c \neq \emptyset$ where
%\begin{align*}
%\mathcal{M}_c := \{u \in V(c) : F_{\mu}(u) = m(c)\}.
%\end{align*}
Now, recording that 
$$Q_{\mu}(u) =  \normLp{\diff u}{2}^2 - \mu \gamma_q \normLp{u}{q}^q - \normLp{u}{2^*}^{2^*}$$ 
we have

\begin{lemma}\label{lemma:QL}
Let $N \geq 3$.
	If $(u, \lambda) \in \Hs{1} \backslash \{0\} \times\R$ is a solution to 
	\begin{equation}\label{equa-libre}
	- \Delta u - \mu |u|^{q-2}u - |u|^{2^*-2}u = \lambda u,
	\end{equation}
	then $Q_{\mu}(u) = 0$ and $\lambda <0$.
\end{lemma}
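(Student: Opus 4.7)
The plan is to derive $Q_\mu(u)=0$ by combining the Nehari and Pohozaev identities associated to \eqref{equa-libre}, and then to read off the sign of $\lambda$ from the fact that the $q$-term is $L^2$-subcritical. Standard elliptic regularity applied to \eqref{equa-libre} gives $u$ enough smoothness for both testings to be rigorous.

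Testing \eqref{equa-libre} against $u$ yields the Nehari-type relation
\[
\normLp{\diff u}{2}^2 - \mu \normLp{u}{q}^q - \normLp{u}{2^*}^{2^*} = \lambda \normLp{u}{2}^2,
\]
while the classical multiplier $x\cdot \diff u$, together with $N/2^* = (N-2)/2$, gives the Pohozaev identity
\[
\tfrac{N-2}{2}\normLp{\diff u}{2}^2 = \tfrac{N\mu}{q}\normLp{u}{q}^q + \tfrac{N-2}{2}\normLp{u}{2^*}^{2^*} + \tfrac{N\lambda}{2}\normLp{u}{2}^2.
\]
Multiplying the first identity by $N/2$ and subtracting the second cancels the $\normLp{u}{2}^2$ and $\normLp{u}{2^*}^{2^*}$ contributions and, using the identity $N(q-2)/(2q) = N/2 - N/q$, produces
\[
\normLp{\diff u}{2}^2 - \mu \gamma_q \normLp{u}{q}^q - \normLp{u}{2^*}^{2^*} = 0,
\]
which is exactly $Q_\mu(u)=0$ by \eqref{def-gamma-q}.

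For the sign of $\lambda$, substitute $\normLp{\diff u}{2}^2 = \mu \gamma_q \normLp{u}{q}^q + \normLp{u}{2^*}^{2^*}$ back into the Nehari identity to obtain
\[
\lambda \normLp{u}{2}^2 = \mu(\gamma_q - 1)\normLp{u}{q}^q.
\]
The hypothesis $q < 2 + 4/N$ gives $N(q-2) < 4 < 2q$, hence $\gamma_q < 1$. Since $u \not\equiv 0$ one has $\normLp{u}{q}^q > 0$ and $\normLp{u}{2}^2 > 0$, and combined with $\mu > 0$ this forces the right-hand side above to be strictly negative, so $\lambda < 0$.

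The only step that is not pure algebra is the justification of the Pohozaev identity for an $H^1$-weak solution of \eqref{equa-libre}, which I regard as the main technical point; this is however standard for Sobolev-critical semilinear equations (see e.g.\ \cite{BrezisNirenberg1983}) and would be invoked rather than redone in detail. Everything else reduces to the two linear combinations above.
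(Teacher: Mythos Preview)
Your proof is correct and follows essentially the same route as the paper: combine the Nehari identity with the Pohozaev identity to obtain $Q_\mu(u)=0$, then subtract to isolate $\lambda \normLp{u}{2}^2 = -\mu(1-\gamma_q)\normLp{u}{q}^q$ and conclude from $\gamma_q<1$. The only cosmetic difference is that the paper outsources the derivation of $Q_\mu(u)=0$ to a reference (\cite[Lemma 2.7]{JEANJEAN1997}), whereas you carry out the linear combination explicitly; also, your justification of $\gamma_q<1$ via $q<2+4/N$ is sufficient but stronger than needed (any $q<2^*$ already gives it).
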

%\mycomment{In step 2 of the proof of \cref{prop:4}, we need $\lambda \leq 0$.}
\begin{proof}
The fact that any solution to (\ref{equa-libre}) satisfies $Q_{\mu}(u) =0$ is a direct consequence of the Pohozaev identity see, for example, \cite[Lemma 2.7]{JEANJEAN1997}.  Now we deduce from (\ref{equa-libre}) that
	\begin{equation}\label{eq:free}
	\normLp{\diff u}{2}^2 - \mu  \normLp{u}{q}^q -  \normLp{u}{2^*}^{2^*} = \lambda \normLp{u}{2}^2.
	\end{equation}
	Combining (\ref{eq:free}) with $Q_{\mu}(u)=0$ we obtain that 
	$$\lambda \normLp{u}{2}^2 = - \mu(1- \gamma_q) \normLp{u}{q}^q $$
	which proves the lemma since $\gamma_q \in (0, 1).$
\end{proof}

%%%%%%%%%%%%%%%%%%%%%%%%%%%%%%%%%%%%%%%%%%%%%%%%%%%%%%%%%%%%%%%%%%%%%%%%%%%%%%%%%%%%%%%
The following lemma is implicitly contained in \cite[Lemma 2.6]{JeanjeanJendrejLeVisciglia2020} but since it will be crucial in the proof of \cref{prop:5} we now give an explicit proof.

\begin{lemma}\label{lemma:6}
Let $N \geq 3$.
	For any $c \in (0, c_0)$ there exists a $d=d(c) >0$ such that $m(c- \alpha) \leq m(c) + d \alpha$ for any $\alpha \in \displaystyle \(0, \frac{c}{2}\)$.
\end{lemma}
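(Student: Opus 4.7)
My plan is to exhibit a good competitor for $m(c-\alpha)$ built by rescaling the ground state of $m(c)$ provided by \cref{prop:1}. Let $u_c \in V(c)$ denote such a minimizer, and set
\[
v_\alpha := \sqrt{\tfrac{c-\alpha}{c}}\, u_c, \qquad \alpha \in \bigl(0, \tfrac{c}{2}\bigr).
\]
Then $\normLp{v_\alpha}{2}^2 = c - \alpha$ and $\normLp{\diff v_\alpha}{2}^2 = \tfrac{c-\alpha}{c}\normLp{\diff u_c}{2}^2 < \normLp{\diff u_c}{2}^2 < \rho_0$. Since the $\rho_0$ appearing in \eqref{well} is the same constant for every mass parameter in $(0, c_0)$, this shows $v_\alpha \in V(c-\alpha)$, and hence $m(c-\alpha) \leq F_\mu(v_\alpha)$.

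Writing $\theta := (c-\alpha)/c \in (1/2, 1)$, a direct expansion gives
\[
F_\mu(v_\alpha) - F_\mu(u_c) = \tfrac{\theta-1}{2}\normLp{\diff u_c}{2}^2 + \tfrac{\mu}{q}\bigl(1 - \theta^{q/2}\bigr)\normLp{u_c}{q}^q + \tfrac{1}{2^*}\bigl(1 - \theta^{2^*/2}\bigr)\normLp{u_c}{2^*}^{2^*}.
\]
The first summand is $\leq 0$. For the remaining two I invoke Bernoulli's inequality $(1-x)^r \geq 1 - rx$, valid for $r \geq 1$ and $x \in [0,1]$, with $x = \alpha/c$ and $r \in \{q/2,\, 2^*/2\}$ (both strictly larger than $1$ since $q > 2$ and $N \geq 3$), obtaining $1 - \theta^{q/2} \leq \tfrac{q}{2}\tfrac{\alpha}{c}$ and $1 - \theta^{2^*/2} \leq \tfrac{2^*}{2}\tfrac{\alpha}{c}$. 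Combining these with $F_\mu(u_c) = m(c)$ yields $m(c-\alpha) \leq F_\mu(v_\alpha) \leq m(c) + d\,\alpha$ for
\[
d = d(c) := \tfrac{1}{2c}\bigl(\mu\normLp{u_c}{q}^q + \normLp{u_c}{2^*}^{2^*}\bigr),
\]
which is exactly the desired conclusion.

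I do not anticipate any genuine obstacle. The only points to verify with care are that $v_\alpha$ belongs to $V(c-\alpha)$---which uses the uniformity of $\rho_0$ with respect to $c \in (0, c_0)$ recorded after \eqref{well}---and that $c - \alpha$ still lies in $(0, c_0)$, which is immediate from $\alpha \in (0, c/2)$. The constant $d$ is permitted to depend on $c$ (it scales like $1/c$ and through the norms of $u_c$), consistently with the statement of the lemma.
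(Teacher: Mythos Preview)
Your proof is correct and follows the same approach as the paper: rescale a minimizer $u_c \in V(c)$ by $\sqrt{(c-\alpha)/c}$ to obtain a competitor in $V(c-\alpha)$, then bound $F_\mu(v_\alpha)-F_\mu(u_c)$ linearly in $\alpha$. The only cosmetic difference is that the paper extracts the linear bound from the observation that $\alpha \mapsto F_\mu(v_\alpha)-F_\mu(u_c)$ is $C^1$ on $[0,c/2]$ and vanishes at $\alpha=0$, whereas you use Bernoulli's inequality to produce an explicit constant $d=\tfrac{1}{2c}\bigl(\mu\normLp{u_c}{q}^q+\normLp{u_c}{2^*}^{2^*}\bigr)$; your version is slightly more informative but the arguments are otherwise identical.
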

\begin{proof}
	Let $u \in V(c)$ be a minimizer of $m(c)$ and set
	$\displaystyle y_{\alpha} := \sqrt{\frac{c- \alpha}{c}}\cdot u$. Since $y_{\alpha} \in V(c- \alpha)$ we have
	\begin{align*}
	m(c- \alpha) \leq F_{\mu}(y_{\alpha}) = m(c) + [ F_{\mu}(y_{\alpha}) - F_{\mu}(u)]  
	\end{align*}
	with
	\begin{equation*}
	F_{\mu}(y_{\alpha}) - F_{\mu}(u) = - \frac{\alpha}{2c}\normLp{\diff u}{2}^2 - \frac{\mu}{q} \[ \(\frac{c- \alpha}{c}\)^q - 1 \]  \normLp{u}{q}^q -  \frac{1}{2^*} \[ \(\frac{c- \alpha}{c}\)^{2^*} - 1 \] \normLp{u}{2^*}^{2^*}.
	\end{equation*}
	The result now follows from the observation that $\alpha \mapsto F_{\mu}(y_{\alpha}) - F_{\mu}(u)$ is of class $C^1$ on  $ \displaystyle \(0, \frac{c}{2}\)$.
\end{proof}
%%%%%%%%%%%%%%%%%%%%%%%%%%%%%%%%%%%%%%%%%%%%%%%%%%%%%%%%%%%%%%%%%%%%%%%%%%%%%%%%%%%%%%%

Now let $u \in S(c)$ be arbitrary but fixed. For $s \in \Rp$ we set
\begin{align*}
u_s(x) := s^{\frac{N}{2}} u(s x).
\end{align*}
Clearly $u_s \in S(c)$ for any $s \in \Rp$. We define on $\Rp$ the fiber map,
\begin{align}\label{eqn:2.4}
\psi_u(s) := F_{\mu} (u_s) = \dfrac{s^2}{2} \normLp{\diff u}{2}^2 - \dfrac{\mu}{q} s^{q\gamma_q} \normLp{u}{q}^q - \dfrac{s^{2^*}}{2^*} \normLp{u}{2^*}^{2^*},
\end{align}
where $\gamma_q$ is given in  \eqref{def-gamma-q}.
Note that $\gamma_q \in (0, 1)$ and $q\gamma_q \in (0,2)$. 
We also have
\begin{align*}
\psi'_u(s) = s \normLp{\diff u}{2}^2 - \mu \gamma_q s^{q\gamma_q - 1} \normLp{u}{q}^q - s^{2^*-1}\normLp{u}{2^*}^{2^*} = \dfrac{1}{s} Q_{\mu}(u_s).
\end{align*}

%%%%%%%%%%%%%%%%%%%%%%%%%%%%%%%%%%%%%%%%%%%%%%%%%%%%%%%%%%%%%%%%%%%%%%%%%%%%

\begin{lemma} \label{lemma:14}
Let $N \geq 3$
	and $c \in (0, c_0)$. For every $u \in S(c)$, the function $\psi_u$ has exactly two critical points $\spu$ and $\smu$ with $0< \spu < \smu$. Moreover:
	\begin{enumerate}[label=(\roman*)]
		\item \label{point:lemma14:1} $\spu$ is a local minimum point for $\psi_u$, $F_{\mu}(u_{\spu}) <0$  and $u_{\spu} \in V(c)$.
		\item \label{point:lemma14:2} $\smu$ is a global maximum point for $\psi_u$, $\psi_u'(s) < 0, $ for all $s > s_u^+$ and $$F_{\mu}(u_{\smu}) \geq  \displaystyle \inf_{u \in \partial V(c)} F_{\mu}(u) > 0.$$
		\item \label{point:lemma14:3} $\psi_u''(\smu) < 0$ and  the map $u \in S(c) \mapsto s_u^+ \in \R$ is of class $C^1.$
	\end{enumerate}
\end{lemma}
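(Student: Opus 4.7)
The plan is to extract all the conclusions from the qualitative shape of the one-variable function $\psi_u$ in \eqref{eqn:2.4}, using the geometric information provided by \eqref{well}.

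First I would examine the asymptotics of $\psi_u$. Since $q\gamma_q\in(0,2)$, the lowest-order term as $s\to 0^+$ is $-\tfrac{\mu}{q}s^{q\gamma_q}\normLp{u}{q}^q$, so $\psi_u(s)<0$ for $s>0$ small and $\psi_u(s)\to 0^-$; as $s\to\infty$, the term $-\tfrac{s^{2^*}}{2^*}\normLp{u}{2^*}^{2^*}$ dominates and $\psi_u(s)\to-\infty$. To see that $\psi_u$ nevertheless takes strictly positive values, set $s_c:=(\rho_0/\normLp{\diff u}{2}^2)^{1/2}$, so that $\normLp{\diff u_{s_c}}{2}^2=\rho_0$; then $u_{s_c}\in\partial V(c)$ and \eqref{well} yields $\psi_u(s_c)=F_\mu(u_{s_c})\geq \inf_{u\in\partial V(c)}F_\mu(u)>0$. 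Together with continuity, this already guarantees that $\psi_u$ has at least one local minimum at a negative value and one local maximum at a positive value.

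For uniqueness of the critical points, I would rewrite
\[
\psi_u'(s) = s\bigl(\normLp{\diff u}{2}^2-\phi(s)\bigr), \qquad \phi(s):=\mu\gamma_q s^{q\gamma_q-2}\normLp{u}{q}^q + s^{2^*-2}\normLp{u}{2^*}^{2^*}.
\]
Since $q\gamma_q-2<0<2^*-2$, the equation $\phi'(s)=0$ admits a unique positive solution $s^*$; accordingly $\phi$ is strictly decreasing on $(0,s^*)$, strictly increasing on $(s^*,\infty)$, and blows up at both ends. Therefore $\phi(s)=\normLp{\diff u}{2}^2$ has at most two solutions, which, combined with the previous paragraph, forces exactly two critical points $0<\spu<s^*<\smu$ of $\psi_u$.

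To conclude \ref{point:lemma14:1}, I note that $\psi_u$ is strictly decreasing on $(0,\spu)$, hence $\psi_u(\spu)<0$; comparing with $\psi_u(s_c)>0$ then gives $\spu<s_c$, so $\normLp{\diff u_{\spu}}{2}^2=(\spu)^2\normLp{\diff u}{2}^2<\rho_0$ and $u_{\spu}\in V(c)$. For \ref{point:lemma14:2}, $\phi>\normLp{\diff u}{2}^2$ on $(\smu,\infty)$ gives $\psi_u'<0$ there, while the inequality $s_c>\spu$ places $s_c$ in the region where $\psi_u\leq\psi_u(\smu)$, yielding $\psi_u(\smu)\geq \psi_u(s_c)\geq \inf_{\partial V(c)}F_\mu>0$. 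For \ref{point:lemma14:3}, at any critical point of $\psi_u$ the identity $\psi_u''(s)=-s\phi'(s)$ holds; since $\smu>s^*$ gives $\phi'(\smu)>0$, we obtain $\psi_u''(\smu)<0$, and the $C^1$ dependence $u\mapsto \smu$ follows from the implicit function theorem applied to $G(u,s):=\psi_u'(s)$ on $S(c)\times\Rp$, using $\partial_sG(u,\smu)=\psi_u''(\smu)\neq 0$. The main subtlety is really the identification of $s_c$, which bridges the abstract fiber analysis and the geometric input \eqref{well}; once this link is made, the rest of the argument is essentially mechanical.
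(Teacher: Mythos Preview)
Your argument is correct and follows the same overall strategy as the paper: both factor $\psi_u'(s)=s\bigl(\normLp{\diff u}{2}^2-\phi(s)\bigr)$ (the paper writes $\theta(s)=\psi_u'(s)/s$, which is just $\normLp{\diff u}{2}^2-\phi(s)$) and use that $\phi$ is strictly unimodal to get at most two critical points, while the geometric input \eqref{well} via the scaling $s_c$ with $u_{s_c}\in\partial V(c)$ supplies existence and the sign information for \ref{point:lemma14:1}--\ref{point:lemma14:2}. The one substantive difference is in \ref{point:lemma14:3}: the paper computes $\psi_u''$ explicitly and performs a case split on the sign of $q\gamma_q-1$ to count its zeros and rule out $\psi_u''(\smu)=0$, whereas your identity $\psi_u''(s)=-s\,\phi'(s)$ at any critical point, together with the ordering $\spu<s^*<\smu$ already obtained, yields $\psi_u''(\smu)<0$ immediately. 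This is a genuine simplification over the paper's argument; the same identity also gives $\psi_u''(\spu)>0$ for free.
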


\begin{proof}
	Let $u \in S(c)$ be arbitrary. Since $\psi_u(s) \to 0^-$, $\normLp{\nabla u_s}{2} \to 0$, as $s \to 0$ and $\psi_u(s) = F_{\mu}(u_s) > 0$ when $u_s \in \partial V(c) = \{v \in S(c) : \normLp{\nabla v}{2}^2 = \rho_0 \}$, necessarily $\psi_u'$ has a first zero 
	$\spu>0$ corresponding to a local minima. In particular,  $u_{\spu} \in V(c)$ and $F(u_{\spu}) = \psi_u(\spu) <0$. Now, from $ \psi_u(\spu) <0,$ $\psi_u(s) >  0$ when $u_s \in \partial V(c)$ and $\psi_u(s) \to - \infty$ as $s \to  \infty$,  $\psi'_u$ has a second zero $\smu > \spu$ corresponding to a local maxima of $\psi_u$ with  $F_{\mu}(u_{\smu}) \geq   \inf_{u \in \partial V(c)} F_{\mu}(u) > 0$.
	
	To conclude the proofs of \ref{point:lemma14:1} and \ref{point:lemma14:2}, it just suffices to show that $\psi_u'$ has at most two zeros.  However, this is equivalent to showing that the function
	$$s \mapsto \frac{\psi'_u(s)}{s}$$
	has at most two zeros. We have
	\begin{align*}
	\theta(s) := \dfrac{\psi_u'(s)}{s} = \normLp{\diff u}{2}^2 - \mu \gamma_q s^{q\gamma_q - 2} \normLp{u}{q}^q - s^{2^*-2}\normLp{u}{2^*}^{2^*}
	\end{align*}
	and
	\begin{align*}
	\theta'(s) = - \mu (q\gamma_q - 2) \gamma_q s^{q\gamma_q - 3} \normLp{u}{q}^q - (2^*-2) s^{2^*-3}\normLp{u}{2^*}^{2^*}.
	\end{align*}
	Since $q\gamma_q - 2 < 0$ and $2^*-2 > 0$, the equation $\theta'(s) = 0$ has a unique solution and hence $\theta(s)$ has indeed at most two zero points. 
	
	To establish \ref{point:lemma14:3} let us first show that $\psi_u''(s_u^+) <0$. In this aim, first note that in view of \ref{point:lemma14:1} and \ref{point:lemma14:2}, $\psi_u''(s)$ has a zero 
	$s_u^0 \in (s_u^-, s_u^+).$ Now, by direct calculations
	$$ \psi_u''(s) = \normLp{\diff u}{2}^2  - \mu \gamma_q (q \gamma_q -1) s^{q\gamma_q - 2} \normLp{u}{q}^q - (2^*-1) s^{2^*-2}\normLp{u}{2^*}^{2^*}.$$
	We distinguish two cases. If $q \gamma_q -1 \leq 0$ then $\psi_u''(s)$ has at most one zero and we are done. If $q \gamma_q -1 > 0$, then, knowing that $\psi_u''(s)$ has a zero we deduce that $\psi_u''(s)$ has exactly two zeros that we denote by $s_u^1 < s_u^2$. To conclude it suffices to show that $s_u^0 = s_u^2$ since this would imply that $s_u^+$ cannot be a zero of $\psi_u''(s)$. To show this, we assume by contradiction that $s_u^0 = s_u^1$. Then, since $\psi_u''(s) <0$ for $s \in (0, s_u^1)$ and recording that $\psi_u'(s) <0$ for $s>0$ small we deduce that $\psi_u'(s) <0$ for $s \in (0, s_u^0).$ This contradicts the fact that $\spu < s_u^0$ satisfies $\psi'(\spu) =0$. At this point we have proved that $\psi_u''(s_u^+) <0$. Now \ref{point:lemma14:3} follows from a direct application of the Implicit Function Theorem to the $C^1$ function $\Psi : \R \times S(c) \mapsto \R$ defined by $\Psi(s,u) = \psi_u'(s),$ taking into account that $\Psi(s_u^+, u) =0$ and
	$\partial_s \Psi(s_u^+, u) = \psi_u''(s_u^+) < 0.$
\end{proof}

In view of \cref{lemma:14}, the set $\slc := \{u \in S(c): Q_{\mu}(u) = 0\} $  admits the decomposition into the disjoint union $\slc = \slp  \cup \slm$, see \eqref{def-slp-slm} for the definitions of $\slp$ and $\slm$. 

%where 
%\begin{align*}
%\slp &:= \{u \in \slc: F_{\mu}(u) <0\} = \{u \in S(c): \psi_u'(1) = 0,  F_{\mu}(u) <0\}, \\
%\slm &:= \{u \in \slc: F_{\mu}(u) >0\} = \{u \in S(c): \psi_u'(1) = 0,  F_{\mu}(u) >0\}.
%\end{align*}
%\mycomment{We introduced sets $\slp$ and $\slm$ in page 4. So, I think that we should drop above texts.}

\begin{lemma} \label{lemma:15}
Let $N \geq 3$. Introducing, for any $c \in (0, c_0)$, the set
	\begin{align*}
	W(c) := \{u \in S(c): \smu > 1\}
	\end{align*}
	it holds that
	\begin{enumerate}[label=(\roman*)]
		\item \label{point:lemma15.1} $\slp \subset W(c)$.
		\item \label{point:lemma15.2}  $\partial W(c) = \slm$ and $\displaystyle \inf_{u \in \partial W(c)} F_{\mu}(u) >0.$
		\item \label{point:lemma15.3} $V(c) \cap \{u : F_{\mu}(u) <0 \} \subset W(c).$ 
		\item \label{point:lemma15.4} $\displaystyle \inf_{u \in W(c)} F_{\mu}(u)$ is reached and $\displaystyle \inf_{u \in W(c)} F_{\mu}(u) = m(c)$.
	\end{enumerate} 
\end{lemma}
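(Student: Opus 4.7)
The plan is to exploit the fiber map $\psi_u(s) = F_\mu(u_s)$ from \cref{lemma:14}, the identity $\psi_u'(1) = Q_\mu(u)$, and the $C^1$ dependence of $s_u^+$ on $u$.

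For part \ref{point:lemma15.1}, if $u \in \slp$ then $Q_\mu(u) = 0$ and $F_\mu(u) < 0$; equivalently, $s=1$ is a critical point of $\psi_u$ with $\psi_u(1) < 0$. Since \cref{lemma:14} gives that $\psi_u$ has exactly two critical points $s_u^- < s_u^+$ with $\psi_u(s_u^-) < 0 < \psi_u(s_u^+)$, necessarily $s_u^- = 1$, so $s_u^+ > 1$ and $u \in W(c)$. For part \ref{point:lemma15.2}, a point $u$ lies in $\partial W(c)$ iff it is a limit both of a sequence in $W(c)$ and of a sequence outside $W(c)$; by the continuity of $u \mapsto s_u^+$ from \cref{lemma:14}\,\ref{point:lemma14:3}, this forces $s_u^+ = 1$, and then \cref{lemma:14}\,\ref{point:lemma14:2} gives $Q_\mu(u) = 0$ and $F_\mu(u) = \psi_u(s_u^+) > 0$, so $u \in \slm$. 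Conversely, for $u \in \slm$ the rescalings $v = u_s$ satisfy $(u_s)_t = u_{st}$, whence $s_v^+ = s_u^+/s = 1/s$; choosing $s$ slightly smaller (resp.\ larger) than $1$ produces approximating points in $W(c)$ (resp.\ outside $W(c)$), so $u \in \partial W(c)$. The strict positivity $\inf_{\partial W(c)} F_\mu > 0$ then follows from \cref{lemma:14}\,\ref{point:lemma14:2}.

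For part \ref{point:lemma15.3}, let $u \in V(c)$ with $F_\mu(u) < 0$. The case $s_u^+ = 1$ is excluded by $\psi_u(1) = F_\mu(u) < 0 < \psi_u(s_u^+)$. To rule out $s_u^+ < 1$, set $s^\star := \sqrt{\rho_0/\normLp{\diff u}{2}^2} > 1 > s_u^+$, so that $u_{s^\star} \in \partial V(c)$ and hence $\psi_u(s^\star) \geq \inf_{\partial V(c)} F_\mu > 0$ by \eqref{well}. But $\psi_u$ is strictly decreasing on $(s_u^+,\infty) \supset [1,s^\star]$ by \cref{lemma:14}\,\ref{point:lemma14:2}, yielding $\psi_u(1) \geq \psi_u(s^\star) > 0$, contradicting $\psi_u(1) < 0$.

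For part \ref{point:lemma15.4}, the upper bound $\inf_{W(c)} F_\mu \leq m(c)$ follows from \cref{prop:1} combined with \ref{point:lemma15.3}: the minimizer $u_c$ for $m(c)$ satisfies $u_c \in V(c) \cap \{F_\mu < 0\} \subset W(c)$. For the reverse inequality, I would observe that for any $u \in W(c)$ the point $s=1$ lies either in $(0, s_u^-]$, on which $\psi_u$ is strictly decreasing (since $\psi_u(s) \to 0^-$ as $s \to 0^+$ and $s_u^-$ is the first critical point), or in $(s_u^-, s_u^+)$, on which $\psi_u$ is strictly increasing; in either case $\psi_u(1) \geq \psi_u(s_u^-)$. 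Since $u_{s_u^-} \in V(c)$ by \cref{lemma:14}\,\ref{point:lemma14:1}, we obtain $F_\mu(u) \geq F_\mu(u_{s_u^-}) \geq m(c)$, with equality attained at $u_c$. The most delicate step is \ref{point:lemma15.2}, where the continuity of $s_u^+$ must be combined carefully with the rescaling $u_s$ to approximate elements of $\slm$ from inside $W(c)$.
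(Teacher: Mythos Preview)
Your proof is correct and follows essentially the same approach as the paper. For \ref{point:lemma15.1}--\ref{point:lemma15.3} the arguments coincide (the paper is simply terser, calling \ref{point:lemma15.1}--\ref{point:lemma15.2} ``direct consequences'' of \cref{lemma:14}), and for \ref{point:lemma15.4} the paper routes through the identity $\inf_{W(c)}F_\mu=\inf_{\slp}F_\mu$ together with the cited fact $\slp\subset V(c)$, whereas you argue directly that $F_\mu(u)\geq F_\mu(u_{s_u^-})\geq m(c)$ for each $u\in W(c)$ --- a minor variation of the same idea.
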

\begin{proof}
	Points \ref{point:lemma15.1} and \ref{point:lemma15.2} are direct consequence of \cref{lemma:14} and of the definition of $W(c)$. To prove \ref{point:lemma15.3} we assume by contradiction that there exists a $v \in V(c) \cap \{u : F_{\mu}(u) <0 \} $ with $v \not \in W(c)$. Since $v \not \in W(c)$, then by \cref{lemma:14} \ref{point:lemma14:2} we know that $\psi_v'(s) <0$ for all $s \geq 1$. Thus, for all $s \geq 1$,
	\begin{equation}\label{nice}
	F_{\mu}(v_s) = \psi_v(s) \leq \psi_v(1) = F_{\mu}(v) <0.
	\end{equation}
	But, since $v \in V(c)$ there exists a $s_0 >1$ such that $v_{s_0} \in \partial V(c)$. Recording that $F_{\mu}(u) >0$ for any $u \in \partial V(c)$ we get a contradiction with (\ref{nice}). This proves \ref{point:lemma15.3}. Now, still in view of  \cref{lemma:14} and the definition of $W(c)$ we have that
	\begin{equation}\label{L1}
	\inf_{u \in W(c)} F_{\mu}(u) = \inf_{u \in \slp} F_{\mu}(u).
	\end{equation}
	Also, we know from \cite[Lemma 2.4]{JeanjeanJendrejLeVisciglia2020} that $\slp \subset V(c)$ and since any minimizer for $F_{\mu}$ on $V(c)$ must belong to $\slp$ it follows that 
	\begin{equation}\label{L2}
	m(c)= \inf_{u \in V(c)} F_{\mu}(u) = \inf_{u \in \slp} F_{\mu}(u).
	\end{equation}
	Gathering \eqref{L1} and \eqref{L2} and recording from \cite{JeanjeanJendrejLeVisciglia2020} that $m(c)$ is reached the conclusion follows.
\end{proof}

\section{The proof of \cref{prop:3}}\label{Section3}

We follow the strategy introduced in \cite{JEANJEAN1997} and consider the functional $\Ft : \Rp \times \Hs{1} \to \R$ defined by
\begin{align*}
\Ft(s, u) := F_{\mu} (u_s) = \psi_u(s) = \dfrac{s^2}{2} \normLp{\diff u}{2}^2 - \dfrac{\mu}{q} s^{q\gamma_q} \normLp{u}{q}^q - \dfrac{s^{2^*}}{2^*} \normLp{u}{2^*}^{2^*}. 
%\label{eqn:2.4}
\end{align*}
Note that
\begin{align}
\partial_s \Ft(s, u) = \psi_u'(s) = \dfrac{1}{s} Q_{\mu} (u_s) \label{eqn:2.9}
\end{align}
and, for any $v \in \Hs{1}$,
\begin{align}
\begin{split}
\partial_u \Ft(s, u)(v) &= s^2 \intrn \diff u \diff v dx - \mu s^{q\gamma_q} \intrn |u|^{q-2} u v dx - s^{2^*} \intrn |u|^{2^*-2} u v dx = F_{\mu}'(u_s) (v_s).
\end{split} \label{eqn:2.13}
\end{align}

We recall that the tangent space at a point $u \in S(c)$ is defined as
\begin{align*}
T_uS(c) = \{v \in \Hs{1}: \inner{u}{v}_{\Lp{2}} = 0\},
\end{align*}
and that, for any $u \in S(c)$ and any $v \in T_uS(c)$, 
\begin{align}
\inner{{F_{\mu}}'_{|S(c)}}{v} = \inner{F_{\mu}'}{v}. \label{eqn:2.15}
\end{align}

\begin{lemma}\label{lemma:5}
Let $N \geq 3$.
	For $u \in S(c)$ and $s > 0$, the map
	\begin{align*}
	T_uS(c) \to T_{u_s}S(c), \quad \phi \mapsto  \phi_s
	\end{align*}
	is a linear isomorphism with inverse $$T_{u_s}S(c) \to T_uS(c), \quad \psi \mapsto \psi_{\frac{1}{s}}.$$
\end{lemma}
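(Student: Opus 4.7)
The statement is essentially a change-of-variables bookkeeping exercise, so the plan is to verify three things: the map is linear, it lands in the claimed target tangent space, and composition with the candidate inverse is the identity in both orders.

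First I would observe that linearity is immediate from the definition $\phi_s(x) = s^{N/2}\phi(sx)$: the map is a composition of the multiplication by the scalar $s^{N/2}$ with the pullback by the dilation $x \mapsto sx$, both of which are linear operations on $H^1(\R^N)$, and dilation preserves $H^1$, so the map sends $H^1$ into $H^1$.

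Next I would check that the map lands in $T_{u_s}S(c)$. For $\phi \in T_u S(c)$, the pairing
\begin{equation*}
\langle u_s, \phi_s \rangle_{L^2} = \intrn s^{N/2} u(sx)\, s^{N/2} \phi(sx)\, dx = s^N \intrn u(sx)\phi(sx)\, dx
\end{equation*}
becomes, after the substitution $y = sx$ (so $dx = s^{-N} dy$), exactly $\int u(y)\phi(y)\, dy = \langle u,\phi\rangle_{L^2} = 0$. Hence $\phi_s \in T_{u_s}S(c)$, so the map is well-defined. The same computation with $u$ replaced by $u_s$ and $s$ replaced by $1/s$ shows that $\psi \mapsto \psi_{1/s}$ is well-defined as a map $T_{u_s}S(c) \to T_uS(c)$.

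Finally I would verify that the two maps are mutual inverses by a direct pointwise computation:
\begin{equation*}
(\phi_s)_{1/s}(x) = \Big(\tfrac{1}{s}\Big)^{N/2} \phi_s\!\Big(\tfrac{x}{s}\Big) = s^{-N/2} \cdot s^{N/2}\, \phi\!\Big(s \cdot \tfrac{x}{s}\Big) = \phi(x),
\end{equation*}
and analogously $(\psi_{1/s})_s = \psi$. Combined with linearity, this gives the linear isomorphism claim. There is no real obstacle here; the only point worth emphasizing is that $u \mapsto u_s$ is an $L^2$-isometry (so that it preserves $S(c)$), which is precisely what makes the orthogonality condition defining the tangent space transport cleanly under the rescaling.
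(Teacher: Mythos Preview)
Your proof is correct and follows essentially the same approach as the paper: a change of variables shows $\langle u_s,\phi_s\rangle_{L^2}=\langle u,\phi\rangle_{L^2}$ so the map is well defined, linearity is clear, and the semigroup property $(\phi_s)_{1/s}=\phi$ yields the inverse. The only difference is cosmetic---you write out the inverse computation pointwise, whereas the paper invokes the identity $w_{ts}=(w_t)_s$.
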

%\mycomment{I also understand the point 3) of the Referee 2. I only write this lemma more detail.}
\begin{proof}
	We follow the approach in \cite[Lemma 3.6]{BartschSoave2019}. For $\phi \in T_uS(c)$ and for $t > 0$, we have
	\begin{align*}
	\intrn u_t(x) \phi_t(x) dx = \intrn t^N u(tx) \phi(tx) dx = \intrn u(y) \phi(y) dy = 0.
	\end{align*}
	As a consequence, $\phi_t \in T_{u_t} S(c)$ and the map is well defined. Clearly it is linear. Taking into account that, for every $t, s > 0$ and $w \in \Hs{1}$, 
	\begin{align*}
	w_{ts} = (ts)^{\frac{N}{2}} w(ts x) = (w_t)_s,	
	\end{align*}
	we obtain that the map is linear isomorphism.
\end{proof}
%%%%%%%%%%%%%%%%%%%%%%%%%%%%%%%%%%%%%%%%%%%%%%%%%%%%%%%%%%%%%%%%%%%%%%%%%%%%%%%%%%%%%%%

\begin{definition} \label{def:2}
	Given $c > 0$, we say that $\Ft$ has a mountain pass geometry on $\Rp \times S_r(c)$ at level $\tilde{M}(c)$ if
	\begin{align*}
	\tilde{M}(c) := \inf_{\tilde{h} \in \tilde{\Gamma}(c)} \max_{t \in [0,\infty)} \Ft(\tilde{h}(t)) > \max \{\Ft(\tilde{h}(0)), \Ft(\tilde{h}(t_{\tilde{h}}))\}
	\end{align*}
	where
	\begin{align*}
	\tilde{\Gamma}(c) := \{\tilde{h} \in \mathit{C}([0,\infty), \Rp \times S_r(c)) : \tilde{h}(0) \in (1, \slpr), \exists t_{\tilde{h}} > 0 \mbox{ s.t. } \tilde{h}(t) \in (1, E_c) \, \forall t \geq t_{\tilde{h}}\}.
	\end{align*}
\end{definition}

%%%%%%%%%%%%%%%%%%%%%%%%%%%%%%%%%%%%%%%%%%%%%%%%%%%%%%%%%%%%%%%%%%%%%%%%%%%%%%%%%%%%%%%
Recording that the definition of $M^0(c)$ is given in (\ref{def:M0}), we have,
\begin{lemma} \label{lemma:3}
Let $N \geq 3$.
	For any $c \in (0, c_0)$,  $\Ft$ has a mountain pass geometry at the level $\tilde{M}(c)$. Moreover, $M^0(c) = \tilde{M}(c)$.
\end{lemma}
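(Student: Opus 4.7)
The plan is to exhibit a bijective correspondence between paths in $\tilde{\Gamma}(c)$ and paths in $\Gamma^0(c)$ that preserves the maximum value of the energy along the path. This will simultaneously yield the equality $M^0(c) = \tilde{M}(c)$ and reduce the mountain pass geometry of $\tilde{F}_\mu$ to a short height-separation argument for $F_\mu$.

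To prove the equality, given $\tilde{h}(t) = (s(t), u(t)) \in \tilde{\Gamma}(c)$ I set $g(t) := (u(t))_{s(t)}$. The scaling map $(s,u) \mapsto u_s$ is continuous from $\mathbb{R}_+ \times H^1_r(\R^N)$ to $H^1_r(\R^N)$ and preserves the $L^2$-norm, so $g \in C([0,\infty), S_r(c))$. Since the endpoint conditions force $s(0)=1$ and $s(t)=1$ for $t \geq t_{\tilde{h}}$, one has $g(0) = u(0) \in \slpr$ and $g(t) = u(t) \in E_c$ for $t \geq t_{\tilde{h}}$, so $g \in \Gamma^0(c)$. By the very definition of $\tilde{F}_\mu$ in \eqref{eqn:2.4}, $\tilde{F}_\mu(\tilde{h}(t)) = F_\mu(g(t))$, and taking infima yields $M^0(c) \leq \tilde{M}(c)$. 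Conversely, for $g \in \Gamma^0(c)$ the path $\tilde{h}(t) := (1, g(t))$ obviously belongs to $\tilde{\Gamma}(c)$ and satisfies the same identity, giving $\tilde{M}(c) \leq M^0(c)$.

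For the mountain pass geometry, by the above identification it suffices to produce a strict height gap for $g \in \Gamma^0(c)$. The key observation is that $V(c)$ and $E_c$ are disjoint: by \cref{lemma:15}~\ref{point:lemma15.4}, $\inf_{V(c)} F_\mu = m(c)$, and since $m(c) < 0$ one has $2m(c) < m(c)$, so no $u \in V(c)$ can satisfy $F_\mu(u) < 2m(c)$. Because $g(0) \in \slpr \subset \slp \subset V(c)$ (by \cite[Lemma 2.4]{JeanjeanJendrejLeVisciglia2020}) and $g(t_g) \in E_c$, the path must cross $\partial V(c)$; combined with \eqref{well} this forces $\max_{t} F_\mu(g(t)) \geq \inf_{\partial V(c)} F_\mu > 0$, hence $M^0(c) > 0$. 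On the other hand, the endpoint values satisfy $F_\mu(g(0)) < 0$ (since $\slp \subset \{F_\mu < 0\}$) and $F_\mu(g(t_g)) < 2m(c) < 0$, so the strict separation required by \cref{def:2} is automatic.

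I do not expect any serious obstacle. The only routine technicality is the $H^1$-continuity of the scaling map $(s,u) \mapsto u_s$, which follows from the identities $\|u_s\|_2 = \|u\|_2$, $\|\nabla u_s\|_2^2 = s^2 \|\nabla u\|_2^2$ and $\|u_s\|_p^p = s^{N(p-2)/2}\|u\|_p^p$ together with standard continuity of translations and dilations in $L^p$. The whole argument amounts to reinterpreting, in the present radial setting, Jeanjean's augmented-functional trick from \cite{JEANJEAN1997}.
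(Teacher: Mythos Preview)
Your proposal is correct and follows essentially the same approach as the paper: both directions of the equality $M^0(c)=\tilde{M}(c)$ are obtained via the maps $g\mapsto (1,g)$ and $(s,u)\mapsto u_s$, and the mountain pass geometry is deduced from the fact that every path in $\Gamma^0(c)$ must cross $\partial V(c)$ (since $E_c\cap V(c)=\emptyset$), whence $M^0(c)\geq \inf_{\partial V(c)}F_\mu>0$ while the endpoint values are negative. One minor quibble: your citation of \cref{lemma:15}\ref{point:lemma15.4} for $\inf_{V(c)}F_\mu=m(c)$ is unnecessary (that lemma concerns $W(c)$; the identity you use is simply the definition \eqref{well} of $m(c)$).
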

\begin{proof}
	Let $h \in \Gamma^0(r)$, since $\tilde{h}(t) = (1, h(t)) \in \tilde{\Gamma}(c)$ and $\Ft(\tilde{h}(t)) = F_{\mu}(h(t))$ for all $t \in \Rp$, we have that
	$M^0(c) \geq \tilde{M}(c)$.
	Next, we shall prove that $\tilde{M}(c) \geq M^0(c)$. For all $\tilde{h}(t) = (s(t), v(t)) \in \tilde{\Gamma}(c)$, we have $s(0) = 1$, $v(0) \in \slpr$ and there exists a $t_{\tilde{h}} > 0$ such that $s(t) = 1$, $v(t) \in E_c$ for all $t \geq t_{\tilde{h}}$.
	Setting $h(t) = v(t)_{s(t)}$, we have that $h$ is continuous from $[0, \infty)$ into $S_r(c)$ and 
	\begin{align*}
	h(0) = v(0)_{s(0)} = v(0) \in \slpr, \quad h(t) = v(t)_{s(t)} = v(t) \in E_c \quad \forall t \geq t_{\tilde{h}}.
	\end{align*}
	Hence, $h \in \Gamma^0(r)$ and $\Ft(\tilde{h}(t)) = F_{\mu}(v(t)_{s(t)}) = F_{\mu}(h(t))$. Thus, $\tilde{M}(c) \geq M^0(c)$ and finally $M^0(c) = \tilde{M}(c)$.
	
	Now we claim that
	\begin{equation}\label{MPpositive}
	M^0(c) >0 \quad \mbox{for any} \quad  c \in (0, c_0).
	\end{equation}
	Indeed, let $g \in \Gamma^0(c)$ be arbitrary. Since $g(0) \subset \slpr $ in particular $g(0) \in V(c)$. Now for $t >0$ large, since $F_{\mu}(g(t)) < 2 m(c)$, necessarily in view of (\ref{well}), $g(t) \not \in V(c)$. By continuity of $g$ there exists a $t_0 >0$ such that $g(t_0) \in \partial V(c)$ and using again (\ref{well}) we conclude. \medskip
	
	At this point observing that 
	\begin{align*}
	\max\{ \Ft(\tilde{h}(0)), \Ft(\tilde{h}(t_h)) \} = \max\{ F_{\mu}(h(0)), F_{\mu}(h(t_h)) \} <0
	\end{align*}
	it follows that $\Ft$ has a mountain pass geometry at level $\tilde{M}(c)$ for all $0 < c < c_0$.
\end{proof}
%%%%%%%%%%%%%%%%%%%%%%%%%%%%%%%%%%%%%%%%%%%%%%%%%%%%%%%%%%%%%%%%%%%%%%%%%%%%%%%%%%%%%%%

\begin{proof}[Proof of \cref{prop:3}]
	Following \cite[Section 5]{Ghoussoub1993}, we set
	\begin{enumerate}
		\item $\mathcal{F} = \big\{\tilde{h}([0,\infty)): \tilde{h} \in \tilde{\Gamma}(c) \big\}$.
		\item $B = (1, \slpr) \cup (1, E_c)$.
		\item $F = \{(s,u) \in \Rp \times S_r(c): \Ft(s, u) \geq \tilde{M}(c)\}$.
	\end{enumerate} 
	Since $\Ft$ has a  mountain pass geometry at level $\tilde{M}(c)$ (see \cref{lemma:3}) and by the definition of the superlevel set $F$, we obtain $F \setminus B = F$ and 
	\begin{align}
	\sup_{(1, u) \in B} \Ft(s, u) \leq \tilde{M}(c) \leq \inf_{(s, u) \in F} \Ft(s, u). \label{eqn:2.7}
	\end{align}
	For any $A \in \mathcal{F}$, there exists a $h_0 \in \tilde{\Gamma}(c)$ such that $A = h_0([0,\infty))$ and
	\begin{align*}
	\tilde{M}(c) = \inf_{\tilde{h} \in \tilde{\Gamma}(c)} \max_{t \in [0,\infty)} \Ft(\tilde{h}(t)) \leq \max_{t \in [0,\infty)} \Ft(h_0(t)).
	\end{align*}
	Hence, there exists a $t_0 \in [0,\infty)$ such that $\tilde{M}(c) \leq \Ft(h_0(t_0))$. This means that $h_0(t_0) \in F$ and consequently,
	\begin{align}
	A \cap F \setminus B \neq \emptyset, \qquad \forall A \in \mathcal{F}. \label{eqn:2.8}
	\end{align}
	Now, for all $(s, u) \in \Rp \times S_r(c)$, we have
	\begin{align*}
	\Ft(s, u) = F_{\mu}({u}_{s}) = F_{\mu}(\abs{u}_{s}) = \Ft(1, \abs{u}_{s}).
	\end{align*}
	Hence, for any minimizing sequence $(z_n = (\alpha_n, \beta_n) ) \subset \tilde{\Gamma}(c)$ for $\tilde{M}(c)$, the sequence $(y_n = (1, \abs{\beta_n}_{\alpha_n}) )$ is also a minimizing sequence for $\tilde{M}(c)$.
	
	Using the terminology in \cite[Section 5]{Ghoussoub1993}, it means that $\mathcal{F}$ is a homotopy stable family of compact subset of $\R \times S_r(c)$ with extended closed boundary $B$ and the superlevel set $F$ is a dual set for $\mathcal{F}$. By \eqref{eqn:2.7} and \eqref{eqn:2.8}, we can apply \cite[Theorem 5.2]{Ghoussoub1993} with the minimizing sequence $\{y_n = (1, \abs{\beta_n}_{\alpha_n}) \}$. This implies that there exists a Palais-Smale sequence ${(s_n, w_n)} \subset \Rp \times S_r(c)$ for $\Ft$ restricted to $\Rp \times S_r(c)$ at level $\tilde{M}(c)$, that is, as $n \to \infty$,
	\begin{align}
	\partial_s \Ft(s_n, w_n) \to 0, \label{eqn:2.10}
	\end{align}
	and 
	\begin{align}
	\norm{\partial_u \Ft(s_n, w_n) }_{(T_{w_n}S(r))^*} \to 0, \label{eqn:2.11}
	\end{align}
	with the additional property that
	\begin{align}
	\abs*{s_n - 1} + \normHs{w_n - \abs{\beta_n}_{\alpha_n}([0,\infty))}{1} \to 0. \label{eqn:2.12}
	\end{align}
	By \eqref{eqn:2.9}, \eqref{eqn:2.10} and since $(s_n)$ is bounded due to \eqref{eqn:2.12}, we obtain $Q_{\mu}((w_n)_{s_n}) \to 0$ as $n \to \infty$.  Also, by \eqref{eqn:2.13}, the condition \eqref{eqn:2.11} implies that 
	\begin{align}
	F_{\mu}'((w_n)_{s_n}) ((\phi)_{s_n}) \to 0, \label{eqn:2.14}
	\end{align}
	as $n \to \infty$, for every $\phi \in T_{w_n}S_r(c)$. Let then $u_n:= (w_n)_{s_n}$. By \eqref{eqn:2.15}, \eqref{eqn:2.14} and \cref{lemma:5}, we obtain that $(u_n) \subset S_r(c)$ is a Palais-Smale sequence for $F_{\mu}$ restricted to $S_r(c)$ at level $M^0(c)$, with $Q_{\mu}(u_n) \to 0$. Since the problem is invariant under rotations, $(u_n) \subset S_r(c)$ is also the Palais-Smale sequence for $F_{\mu}$ restricted to $S(c)$ at level $M^0(c)$, with $Q_{\mu}(u_n) \to 0$.
\end{proof}
%%%%%%%%%%%%%%%%%%%%%%%%%%%%%%%%%%%%%%%%%%%%%%%%%%%%%%%%%%%%%%%%%%%%%%%%%%%%%%%%%%%%%%%

%%%%%%%%%%%%%%%%%%%%%%%%%%%%%%%%%%%%%%%%%%%%\input{Convergence-PS-sequence}%%%%%%%%%%%%%%%%%%%%%%%%%%%%%%%%%%%%%%%%%%%%%%%%

\section{The proof of \cref{prop:4}} \label{Section4}

In this section we give the

\begin{proof}[Proof of \cref{prop:4}]
	
	Let $(u_n) \subset \Hsr{1}$ be given by \cref{prop:3}. To show its convergence we proceed in three steps.
	
	\medbreak
	\noindent \textbf{Step 1: }  \textit{$(u_n) \subset \Hsr{1}$} is bounded.
	\medbreak
	
	Since $Q_{\mu}(u_n) \to 0,$ we have, using the Gagliardo-Nirenberg inequality (\ref{Gagliardo-Nirenberg-I}),
	\begin{align*}
	F_{\mu}(u_n) &= \dfrac{1}{N} \normLp{\diff u_n}{2}^2 - \dfrac{\mu}{q} \Big( 1 - \frac{q \gamma_q}{2^*} \Big) \normLp{u_n}{q}^q + o_n(1) \\
	&\geq \dfrac{1}{N} \normLp{\diff u_n}{2}^2 - \dfrac{\mu}{q} C_{N,q}^q \Big( 1 - \frac{q \gamma_q}{2^*} \Big) c^{(1- \gamma_q)q} \, \normLp{\diff u_n}{2}^{q \gamma_q} + o_n(1),
	\end{align*} 
	where $o_n(1) \to 0$ as $n \to \infty.$ Since $F_{\mu}(u_n) \to M^0(c) < \infty$ and $q \gamma_q <2$ the conclusion follows.
	
	\medbreak
	\noindent \textbf{Step 2: } \textit{$(u_n) \subset \Hsr{1}$ has a non-trivial weak limit.}
	\medbreak
	
	Since $(u_n) \subset \Hsr{1}$ is a bounded sequence, by the compact embedding of $\Hsr{1}$ into $\Lp{q}$, there exists a  $u \in \Hsr{1}$  such that, up to a subsequence, $u_n \rightharpoonup u$ weakly in $\Hsr{1}$, $u_n \to u$ strongly in $\Lp{q}$. \medskip

	Let us assume now, by contradiction, that $u$ is trivial. Then,  $\normLp{u_n}{q} \to 0$ and since $Q_{\mu}(u_n) \to 0$, using the Sobolev embedding, see 
	\eqref{Sobolev-I},
	we deduce that
	\begin{align}\label{E4}
	\mathcal{S}\normLp{u_n}{2^*}^{2} \leq \normLp{\diff u_n}{2}^2 \leq \normLp{u_n}{2^*}^{2^*} + o_n(1).
	\end{align}
	We distinguish the two cases 
	$$ \mbox{either} \quad \mbox{(i)} \quad \normLp{u_n}{2^*}^{2^*} \to 0 \quad \mbox{or} \quad \mbox{(ii)} \quad \normLp{u_n}{2^*}^{2^*} \to \ell >0.$$ 
	If (i) holds  then, in view of \eqref{E4}, we also have that $\normLp{\diff u_n}{2}^2 \to 0$ which implies that $F_{\mu}(u_n) \to 0$ contradicting the fact that $M^0(c) >0$, see \eqref{MPpositive}. If (ii) holds we deduce from \eqref{E4} that
	\begin{align*}
	%\label{E5}
	\normLp{u_n}{2^*}^{2^*} \geq \mathcal{S}^{\frac{N}{2}} + o_n(1)
	\end{align*}
	and thus, recording that $Q_{\mu}(u_n) \to 0$ and $\normLp{u_n}{q} \to 0$, it follows that
	\begin{align}\label{E6}
	\normLp{\diff u_n}{2}^2 =  \normLp{u_n}{2^*}^{2^*} + o_n(1) \geq \mathcal{S}^{\frac{N}{2}} + o_n(1).
	\end{align}
	From \eqref{E6} we deduce that
	$$F_{\mu}(u_n) = \frac{1}{N} \normLp{\diff u_n}{2}^2  + o_n(1) \geq \frac{1}{N}\mathcal{S}^{\frac{N}{2}} + o_n(1).$$
	But, since $m(c) <0$,  necessarily $ M^0(c) < \dfrac{\mathcal{S}^{\frac{N}{2}}}{N}$ and we also have a contradiction. 
	\medbreak
	\noindent \textbf{Step 3: } \textit{$(u_n) \subset \Hsr{1}$ strongly converges.}
	\medbreak
	
	Since $(u_n)$ is bounded, following 
	\cite[Lemma 3]{BerestyckiLions1983-2}, we know that
	\begin{align*}
	\big(F_\mu\big|_{S(c)}\big)'(u_n) \to 0 \text{ in } \Hs{-1}
	\iff F_{\mu}'(u_n) - \dfrac{1}{c}\inner{F_{\mu}'(u_n)}{u_n}u_n \to 0 \text{ in } \Hs{-1}.
	\end{align*}
	Thus, for any $w \in \Hs{1}$, we have
	\begin{align}\label{E1}
	o_n(1) &= \inner*{F_{\mu}'(u_n) - \dfrac{1}{c}\inner{F_{\mu}'(u_n)}{u_n}u_n}{w} =  \int_{\R^N} \nabla u_n \cdot \nabla w  - \mu |u_n|^{q-2}u_n w - |u_n|^{2^* -2}u_n w - \lambda_n u_n w \, dx,
	\end{align}
	where $o_n(1) \to 0$ as $n \to \infty$ and 
	\begin{align}\label{E2}
	c \,\lambda_n = \normLp{\diff u_n}{2}^2 - \mu \normLp{u_n}{q}^q - \normLp{u_n}{2^*}^{2^*} + o_n(1).
	\end{align}
	In particular $(\lambda_n) \subset \R$ is bounded and, up to a subsequence, $\lambda_n \to \lambda \in \R$. Now, passing to the limit in \eqref{E1} by weak convergence, we obtain that
	\begin{align}\label{E3}
	- \Delta u - \mu |u|^{q-2}u - |u|^{2^*-2}u =  \lambda u.
	\end{align}
	Thus in view of \cref{lemma:QL}, $Q_{\mu}(u)=0$ and $\lambda < 0$.

	Let $(v_n) \subset \Hsr{1}$ be such that $v_n = u_n - u$.  We have that $v_n \rightharpoonup 0$ weakly in $\Hsr{1}$, $v_n \to 0$ strongly in $\Lp{q}$ and a.e. in $\R^N$. Thus
	\begin{align*}
	%\label{E7}
	\normLp{\diff u_n}{2}^2 = \normLp{\diff u}{2}^2 + \normLp{\diff v_n}{2}^2 + o_n(1)
	\end{align*}
	and also, by the Brezis-Lieb Lemma \cite{BrezisLieb1983},
	\begin{align}\label{E8}
	\normLp{u_n}{2^*}^{2^*} = \normLp{u}{2^*}^{2^*} + \normLp{v_n}{2^*}^{2^*}  + o_n(1).
	\end{align}
	In particular,
	\begin{align}\label{E9}
	F_{\mu}(u_n) = F_{\mu}(u)  + F_{\mu}(v_n)  + o_n(1).
	\end{align}
	and
	\begin{align}\label{E10}
	Q_{\mu}(u_n) = Q_{\mu}(u)  + Q_{\mu}(v_n)  + o_n(1).
	\end{align}
	Here again we distinguish the two cases 
	$$ \mbox{either} \quad \mbox{(i)} \quad \normLp{v_n}{2^*}^{2^*} \to 0 \quad \mbox{or} \quad \mbox{(ii)} \quad \normLp{v_n}{2^*}^{2^*} \to \ell >0.$$ 
	Assuming that (ii) holds, and since $Q_{\mu}(u) =0$, we deduce from \eqref{E10} that
	$\normLp{v_n}{2^*}^{2^*} = \normLp{\diff v_n}{2}^2 + o_n(1)$. Then, reasoning as in Step 2, it follows that   
	$$F_{\mu}(v_n) \geq \dfrac{\mathcal{S}^{\frac{N}{2}}}{N} + o_n(1)$$
	which leads, in view of \eqref{E9}, to 
	\begin{align*}
	%\label{E11}
	F_{\mu}(u_n) \geq  F_{\mu}(u) + \dfrac{\mathcal{S}^{\frac{N}{2}}}{N} + o_n(1).
	\end{align*}
	At this point, recording from \cref{maps} that $c \mapsto m(c)$ is non increasing, using $Q_{\mu}(u) = 0$ and since, by property of the weak limit, $\normLp{u}{2}^2 \leq c$, we get that 
	$$F_{\mu}(u) \geq m\(\normLp{u}{2}^2\) \geq m(c).$$
	Thus, $F_{\mu}(u_n) \to M^0(c)$ satisfies
	\begin{equation*}
	F_{\mu}(u_n) \geq m(c) + \dfrac{\mathcal{S}^{\frac{N}{2}}}{N} + o_n(1)
	\end{equation*}
	which contradicts our assumption on $M^0(c)$. 
	
	It remains to show that if (i) holds then $(u_n) \subset \Hsr{1}$ converges strongly. Since (i) holds, we get from \eqref{E8} that $\normLp{u_n}{2^*}^{2^*} \to \normLp{u}{2^*}^{2^*}$.
	Choosing $w = u_n$ in \eqref{E1} we deduce since $u$ is solution to \eqref{E3} that
	$$ \normLp{\diff u_n}{2}^2 - \lambda_n \normLp{u_n}{2}^{2}  - \mu \normLp{u_n}{q}^{q} - \normLp{u_n}{2^*}^{2^*} \to \normLp{\diff u}{2}^2 - \lambda \normLp{u}{2}^{2}  - \mu \normLp{u}{q}^{q} - \normLp{u}{2^*}^{2^*} =0.$$
	Therefore, taking into account that $\normLp{u_n}{q}^{q} \to \normLp{u}{q}^{q}$ due to $(u_n) \subset \Hsr{1}$ and since $\lambda_n \to \lambda$, we obtain that
	\begin{align*}
	\normLp{\diff u_n}{2}^2 - \lambda \normLp{u_n}{2}^{2}  \to \normLp{\diff u}{2}^2 - \lambda \normLp{u}{2}^{2}.
	\end{align*}
	By $\lambda <0$ (since $u$ is non trivial), see \cref{lemma:QL} we conclude that $u_n \to u$ strongly in $\Hsr{1}$. At this point the proposition is proved.
\end{proof}

\section{The proof of \cref{prop:44}} \label{Section5}

As announced in the Introduction to show that \cref{prop:44} holds we shall rely on \cref{prop:6} and \cref{prop:5}.

\subsection{The proof of \cref{prop:6}} \medskip

We shall proceed into three steps.

\medbreak
\noindent \textbf{Step 1: } \textit{For any $c \in (0, c_0)$, it holds that
	\begin{align*}
	M(c) \geq \inf_{u \in \slm} F_{\mu}(u).
	\end{align*}}
\medbreak

Let $h \in \Gamma(c)$. We have $h(0) \in V(c) \cap \{u : F_{\mu}(u) <0 \}$ and thus, in view of \cref{lemma:15}\ref{point:lemma15.3},  $h(0) \in W(c)$ or equivalently $s_{h(0)}^+  > 1$. Since $h \in \Gamma(c)$, we also have that $F_{\mu}(h(t)) \leq 2m(c) < m(c)$ for $t$ large enough. Thus, from \cref{lemma:15}\ref{point:lemma15.4}, we get that $h(t) \not \in W(c)$ for $t$ large enough or equivalently that $s_{h(0)}^+  <1$ for such $t >0$.  By the continuity of $h$ and of $u \mapsto s_u^+$, see \cref{lemma:14}\ref{point:lemma14:3}, we deduce that there exists a $t_0 >0$ such that $s_{h(t_0)}^+ = 1$, namely such that $h(t_0) \in \partial W(c)$. Thus we have that
\begin{align*}
M(c) \geq \inf_{u \in \partial W(c)} F_{\mu}(u) = \inf_{u \in \slm} F_{\mu}(u)
\end{align*}
due to \cref{lemma:15}\ref{point:lemma15.2}.

\medbreak
\noindent \textbf{Step 2: } \textit{For any $c \in (0, c_0)$, it holds that
	\begin{align*}
	\inf_{u \in \slm} F_{\mu}(u) \geq \inf_{u \in \slmr} F_{\mu}(u).
	\end{align*}}
\medbreak

For any $u \in \slm$, let $v$ be the Schwarz rearrangement of $|u|$. We claim that $\psi_v(s) \leq \psi_u(s)$ for all $s \geq 0$. Indeed, we have
\begin{align*}
\psi_v(s) &= \dfrac{s^2}{2} \normLp{\diff v}{2}^2 - \dfrac{\mu}{q} s^{\frac{N(q-2)}{2}} \normLp{v}{q}^q - \dfrac{s^{2^*}}{2^*} \normLp{v}{2^*}^{2^*} \\
&\leq \dfrac{s^2}{2} \normLp{\diff u}{2}^2 - \dfrac{\mu}{q} s^{\frac{N(q-2)}{2}} \normLp{u}{q}^q - \dfrac{s^{2^*}}{2^*} \normLp{u}{2^*}^{2^*}
=\psi_u(s).
\end{align*} 
Recording, see \cref{lemma:14}, that $\smu$ is the unique global maximum point for $\psi_u$, we deduce from the above claim that
\begin{align*}
\psi_u(\smu) \geq \psi_u(s_v^+) \geq  \psi_v(s_v^+).
\end{align*}
Since $u \in \slm$, we have that $\smu = 1$ and hence
\begin{align*}
F_{\mu}(u) = \psi_u(1) = \psi_u(\smu) \geq \psi_v(s_v^-) = F_{\mu}(v_{s_v^-}).
\end{align*}
Recording that $v_{s_v^+} \in \slmr$, we deduce that
\begin{align*}
\inf_{u \in \slm} F_{\mu}(u) \geq \inf_{u \in \slmr} F_{\mu}(u).
\end{align*}

\medbreak
\noindent \textbf{Step 3: } \textit{For any $c \in (0, c_0)$, it holds that
	\begin{align*}
	\inf_{u \in \slmr} F_{\mu}(u) \geq M^0(c).
	\end{align*}}
\medbreak

Let $u \in \slmr$ and $s_1 > 0$ be such that $u_{s_1} \in E_c$. Let us consider the map
\begin{align*}
g_u : t \in [0, \infty) \mapsto u_{(1-t)\spu + ts_1} \in S_r(c).
\end{align*}
We have that $g_u \in \mathit{C}([0,\infty), S_r(c))$ and 
\begin{align*}
g_u(0) = u_{\spu} \in \slpr \quad\mbox{and}\quad g_u(1) = u_{s_1} \in E_c.
\end{align*}
Hence, we get $g_u \in \Gamma^0(c)$ and
\begin{align*}
F_{\mu}(u) = \max_{s >0} F_{\mu} (u_s) \geq \max_{t \in [0,\infty)} F_{\mu}(g_u(t)) \geq \inf_{g \in \Gamma^0(c)}  \max_{t \in [0,\infty)} F_{\mu}(g(t)) = M^0(c).
\end{align*}

\begin{proof}[Proof of \cref{prop:6}]
	From \textbf{Steps 1, 2 and 3} we deduce that \cref{prop:6} holds.
\end{proof}

%%%%%%%%%%%%%%%%%%%%%%%%%%%%%%%%%%%%%%%%%%%%%%%%%%%%%%%%%%%%%%%%%%%%%%%%%%%%%%%%%%%%%%%

\begin{remark}\label{various-characterization}
	Trivially, since $\Gamma^0(c) \subset \Gamma(c)$, one has $M(c) \leq M^0(c)$. Thus, from \cref{prop:6} we deduce that 
	$$M^0(c)= M(c) = \inf_{u \in \slm} F_{\mu}(u).$$
\end{remark}

\subsection{The proof of \cref{prop:5}} 

%%%%%%%%%%%%%%%%%%%%%%%%%%%%%%%%%%%%%%%%%%%%%%%%%%%%%%%%%%%%%%%%%%%%%%%%%%%%%%%%%%%%%%%
\begin{lemma} \label{lemma:4}
Let $N \geq 3$.
	For any $c \in (0, c_0)$, the following property holds
	$$M(c) \leq \inf_{h \in \mathcal{G}(c)} \max_{t \in [0,\infty)} F_{\mu}(h(t))$$
	where 
	\begin{align*}
	\mathcal{G}(c) := \Big\{h \in \mathit{C}([0,\infty), \displaystyle \cup_{d \in \[\frac{c}{2},c\]} S(d)) : h(0) \in \mathcal{M}_d \, \mbox{ for some } \, d \in \[\frac{c}{2},c\], \exists t_0 = t_0(h) \mbox{ s.t. } h(t) \in E_c \, \forall t \geq t_0 \Big\}.
	\end{align*}
\end{lemma}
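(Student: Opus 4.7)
The plan is to show: for every $h \in \mathcal{G}(c)$ one can construct a competitor $\tilde h \in \Gamma(c)$ satisfying $\max_{t \geq 0} F_\mu(\tilde h(t)) \leq \max_{t \geq 0} F_\mu(h(t))$; taking the infimum over $h$ then yields the claim. Given $h \in \mathcal{G}(c)$ with $h(0) = u_{d_0} \in \mathcal{M}_{d_0}$ for some $d_0 \in [c/2,c]$, I would set $d(t) := \|h(t)\|_{L^2}^2$ and define the Pohozaev-adjusted rescaling
$$
\tilde h(t)(x) := \sqrt{c/d(t)}\,(h(t))_{s_{h(t)}^-}(x), \qquad t \geq 0,
$$
where $s_{h(t)}^-$ is the local-minimum parameter of the fiber map $\psi_{h(t)}$ on $S(d(t))$ provided by the analogue of \cref{lemma:14} at mass $d(t)$. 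Since fiber dilation preserves the $L^2$-norm and the prefactor renormalises mass to $c$, $\tilde h$ takes values in $S(c)$; the continuity of $t \mapsto s_{h(t)}^-$ follows by the implicit function theorem argument used in the proof of point (iii) of \cref{lemma:14}.

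The pointwise estimate $F_\mu(\tilde h(t)) \leq F_\mu(h(t))$ is the crux. Setting $u := (h(t))_{s_{h(t)}^-}$, the critical-point identity $\psi'_{h(t)}(s_{h(t)}^-) = 0$ gives $Q_\mu(u) = 0$; substituting the Pohozaev relation $\|\nabla u\|_2^2 = \mu\gamma_q\|u\|_q^q + \|u\|_{2^*}^{2^*}$ into $F_\mu(\alpha u) - F_\mu(u)$ with $\alpha = \sqrt{c/d(t)} \geq 1$ yields
$$
F_\mu(\alpha u) - F_\mu(u) = \mu\|u\|_q^q\Big[\tfrac{\gamma_q(\alpha^2-1)}{2} - \tfrac{\alpha^q-1}{q}\Big] + \|u\|_{2^*}^{2^*}\Big[\tfrac{\alpha^2-1}{2} - \tfrac{\alpha^{2^*}-1}{2^*}\Big] \leq 0,
$$
where both bracketed terms are non-positive for $\alpha \geq 1$ by elementary convexity (using $\gamma_q < 1$, $q > 2$, and $2^* > 2$). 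Combining with the fiber-optimality $F_\mu(u) = \psi_{h(t)}(s_{h(t)}^-) \leq \psi_{h(t)}(1) = F_\mu(h(t))$ gives the claim. At $t = 0$, $u_{d_0} \in \mathcal{M}_{d_0} \subset \Lambda^-(d_0)$ already satisfies $s_{u_{d_0}}^- = 1$, so $\tilde h(0) = \sqrt{c/d_0}\,u_{d_0}$ with $F_\mu(\tilde h(0)) \leq m(d_0) < 0$; for $t \geq t_0$, $d(t) = c$ and $\tilde h(t) = (h(t))_{s_{h(t)}^-}$ has $F_\mu \leq F_\mu(h(t)) < 2m(c)$, placing it in $E_c$.

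The main technical obstacle I foresee is verifying $\tilde h(0) \in V(c)$, since $\|\nabla \tilde h(0)\|_2^2 = (c/d_0)\|\nabla u_{d_0}\|_2^2$ could exceed $\rho_0$ when $c/d_0$ is close to $2$. To overcome this, I would unpack the construction of $\rho_0$ in \cite{JeanjeanJendrejLeVisciglia2020}: the combination of the energy-barrier condition $\inf_{\partial V(c)} F_\mu > 0$, the negativity $F_\mu(u_{d_0}) = m(d_0) < 0$, and the Gagliardo-Nirenberg inequality shows that minimizers lie uniformly away from $\partial V(c)$, yielding the refined bound $\|\nabla u_{d_0}\|_2^2 \leq (d_0/c)\rho_0$ (after possibly shrinking $c_0$). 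Should this refinement be unavailable, the alternative is to prepend to $\tilde h$ a short fiber-map segment of $\psi_{\tilde h(0)}$ connecting $\tilde h(0)$ to its fiber local-minimum point $(\tilde h(0))_{s^-_{\tilde h(0)}} \in \Lambda^-(c)$; a case analysis, exploiting that $Q_\mu(\tilde h(0)) \leq 0$ (from the same Pohozaev-based computation) together with the location of $\tilde h(0)$ relative to $V(c)$, ensures the prepended maximum does not exceed $F_\mu(\tilde h(0)) \leq m(d_0) \leq \max_t F_\mu(h(t))$.
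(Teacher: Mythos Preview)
Your construction has a genuine gap that prevents $\tilde h$ from lying in $\Gamma(c)$. You define $\tilde h(t) = \sqrt{c/d(t)}\,(h(t))_{s_{h(t)}^-}$; for $t \geq t_0$ one has $d(t)=c$, so $\tilde h(t) = (h(t))_{s_{h(t)}^-}$. But by construction $(h(t))_{s_{h(t)}^-} \in \Lambda^-(c)$, hence
\[
F_\mu(\tilde h(t)) \;\geq\; \inf_{\Lambda^-(c)} F_\mu \;=\; m(c) \;>\; 2m(c),
\]
so $\tilde h(t) \notin E_c$ for any $t$. In other words, the fiber local-minimum projection pulls every point of the path back into the ``well'' at level $\geq m(c)$, so the resulting path never reaches the low-energy region required by the definition of $\Gamma(c)$. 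The inequality you write for large $t$, namely $F_\mu((h(t))_{s^-}) \leq F_\mu(h(t))$, is false: once $h(t)$ has fiber parameter beyond $s^+_{h(t)}$ (which it must, since $F_\mu(h(t)) < 2m(c) < 0 < F_\mu$ on $\partial V(c)$), one has $\psi_{h(t)}(1) < \psi_{h(t)}(s^-_{h(t)})$.

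The paper avoids all of this by using a different rescaling. Given $h \in \mathcal{G}(c)$ with $\theta(t) := \sqrt{\|h(t)\|_2^2/c} \leq 1$, it sets
\[
g(t)(x) := \theta(t)^{\frac{N}{2}-1}\, h(t)\bigl(\theta(t)x\bigr).
\]
This scaling has the key feature that $\|\nabla g(t)\|_2 = \|\nabla h(t)\|_2$ and $\|g(t)\|_{2^*} = \|h(t)\|_{2^*}$ are \emph{preserved}, while $\|g(t)\|_2^2 = c$ and $\|g(t)\|_q^q \geq \|h(t)\|_q^q$ (since $(\tfrac{N}{2}-1)q - N < 0$ and $\theta(t) \leq 1$). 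The energy inequality $F_\mu(g(t)) \leq F_\mu(h(t))$ is then immediate, and the endpoint conditions are automatic: $\|\nabla g(0)\|_2^2 = \|\nabla h(0)\|_2^2 < \rho_0$ gives $g(0) \in V(c)$ with no extra work (this is exactly the difficulty you anticipated and could not resolve cleanly), and $F_\mu(g(t)) \leq F_\mu(h(t)) < 2m(c)$ for $t \geq t_0$ gives $g(t) \in E_c$. The moral is that the ``conformal'' scaling $u \mapsto \theta^{(N-2)/2} u(\theta\,\cdot)$, not the $L^2$-preserving fiber scaling, is the right tool here.
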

\begin{proof}
	Let any $h \in \mathcal{G}(c)$. We define the function
	\begin{align*}
	t \mapsto \theta(t) := \sqrt{\dfrac{\normLp{h(t)}{2}^2}{c}}.
	\end{align*}
	Note that $\theta$ is the continuous function from $[0, \infty)$ into $\R$ and $\theta(t) \leq 1$ for all $t$.
	Now we set 
	\begin{align}
	g(t)(x) := \theta(t)^{\frac{N}{2}-1} h(t)(\theta(t) x).
	\end{align}
	By direct computations, we obtain that
	\begin{align*}
	\normLp{g(t)}{2}^2 &= \dfrac{1}{[\theta(t)]^2} \normLp{h(t)}{2}^2 = c, &\normLp{\nabla g(t)}{2}^2 &= \normLp{\nabla h(t)}{2}^2, \\
	\normLp{g(t)}{q}^q &= [\theta(t)]^{\(\frac{N}{2}-1\)q - N} \normLp{h(t)}{q}^q, &\normLp{g(t)}{2^*}^{2^*} &= \normLp{h(t)}{2^*}^{2^*}.
	\end{align*}
	Hence,
	\begin{align*}
	F_{\mu} (g(t)) &= \dfrac{1}{2} \normLp{\diff g(t)}{2}^2 - \dfrac{\mu}{q} \normLp{g(t)}{q}^q - \dfrac{1}{2^*}\normLp{g(t)}{2^*}^{2^*} \\
	&= \dfrac{1}{2} \normLp{\nabla h(t)}{2}^2 - \dfrac{\mu}{q}  [\theta(t)]^{\(\frac{N}{2}-1\)q - N} \normLp{h(t)}{q}^q - \dfrac{1}{2^*}\normLp{h(t)}{2^*}^{2^*}\\
	&\leq \dfrac{1}{2} \normLp{\nabla h(t)}{2}^2 - \dfrac{\mu}{q} \normLp{h(t)}{q}^q - \dfrac{1}{2^*}\normLp{h(t)}{2^*}^{2^*} = F_{\mu} (h(t))
	\end{align*}
	due to $\theta(t) \leq 1$ for all $t$ and $\(\dfrac{N}{2}-1\)q - N < 0$. 
	Noting that $F_{\mu}(g(0)) \leq F_{\mu}(h(0)) <0$ and that $\normLp{\nabla g(0)}{2}^2 = \normLp{\nabla h(0)}{2}^2 < \rho_0$ we deduce that $g(0) \in V(c) \cap \{u : F_{\mu}(u) < 0 \}$ and hence that  $g \in \Gamma(c)$. At this point the lemma is proved.
\end{proof}
%%%%%%%%%%%%%%%%%%%%%%%%%%%%%%%%%%%%%%%%%%%%%%%%%%%%%%%%%%%%%%%%%%%%%%%%%%%%%%%%%%%%%%%

%%%%%%%%%%%%%%%%%%%%%%%%%%%%%%%%%%%%%%%%%%%%%%%%%%%%%%%%%%%%%%%%%%%%%%%%%%%%%%%%%%%%%%%
Let $u_{\varepsilon}$ be an extremal function for the Sobolev inequality in $\Rn$ defined by
\begin{align}\label{Def-extremal}
u_{\varepsilon}(x) := \dfrac{[N(N-2)\varepsilon^2]^{\frac{N-2}{4}}}{[\varepsilon^2+|x|^2]^{\frac{N-2}{2}}}, \quad \varepsilon > 0, \quad x\in\Rn.
\end{align}
Let $\xi \in C_0^{\infty}(\R^N)$ be a radially non-increasing cut-off function with $\xi \equiv 1$ in $B_1$, $\xi \equiv 0$ in $\R^N \backslash B_2$.
Setting $U_{\varepsilon}(x) = \xi(x) u_{\varepsilon}(x)$ we shall prove the following lemma.

%%%%%%%%%%%%%%%%%%%%%%%%%%%%%%%%%%%%%%%%%%%%%%%%%%%%%%%%%%%%%%%%%%

\begin{lemma}\label{lemma:7L}
Let $N \geq 3$
	and $u \in H^1(\R^N)$ be a nonnegative function. For every 
	$\varepsilon >0$  and every  $t > 0$ we have
	\begin{align*}
	F_{\mu} (u + t U_{\varepsilon} ) & \leq  F_{\mu}(u) +  t \intrn \diff u(x) \cdot\diff U_{\varepsilon}(x) \, dx 
	+\dfrac{t^2}{2} \normLp{\diff U_{\varepsilon}}{2}^2 - \dfrac{\mu t^{q}}{q} \normLp{U_{\varepsilon} }{q}^{q} - \dfrac{t^{2^*}}{2^*} \normLp{U_{\varepsilon} }{2^*}^{2^*}.
	\end{align*}
\end{lemma}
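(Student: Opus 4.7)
The plan is to expand $F_{\mu}(u+tU_\varepsilon)$ term by term and then apply an elementary pointwise inequality to the two nonlinear terms. Recall that by definition
$$F_{\mu}(v) = \tfrac{1}{2}\normLp{\diff v}{2}^2 - \tfrac{\mu}{q}\normLp{v}{q}^q - \tfrac{1}{2^*}\normLp{v}{2^*}^{2^*},$$
and note that $U_\varepsilon = \xi\, u_\varepsilon \geq 0$, since both $\xi$ and $u_\varepsilon$ are nonnegative. Combined with the hypothesis $u \geq 0$, this ensures that all the integrands involving $u + tU_\varepsilon$ are nonnegative, so the $L^q$ and $L^{2^*}$ norms can be written without absolute values.

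First I would treat the gradient term. A direct expansion gives the exact identity
$$\tfrac{1}{2}\normLp{\diff(u+tU_\varepsilon)}{2}^2 = \tfrac{1}{2}\normLp{\diff u}{2}^2 + t\intrn \diff u \cdot \diff U_\varepsilon \, dx + \tfrac{t^2}{2}\normLp{\diff U_\varepsilon}{2}^2,$$
which already matches the first three terms on the right-hand side of the desired inequality.

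The key step is then to handle the two nonlinear terms by the elementary inequality
$$(a+b)^p \geq a^p + b^p \qquad \text{for all } a,b \geq 0 \text{ and all } p \geq 1.$$
This follows, for $a>0$, from the fact that $\varphi(x) := (1+x)^p - 1 - x^p$ satisfies $\varphi(0) = 0$ and $\varphi'(x) = p[(1+x)^{p-1} - x^{p-1}] \geq 0$ on $[0,\infty)$ since $p-1 \geq 0$; the case $a=0$ is trivial. Applying this pointwise with $a = u(x)$, $b = tU_\varepsilon(x)$, first with $p=q > 2 \geq 1$ and then with $p = 2^* > 2 \geq 1$, and integrating, we obtain
$$\intrn (u+tU_\varepsilon)^q\, dx \geq \intrn u^q\, dx + t^q \intrn U_\varepsilon^q\, dx, \qquad \intrn (u+tU_\varepsilon)^{2^*} dx \geq \intrn u^{2^*} dx + t^{2^*}\intrn U_\varepsilon^{2^*} dx.$$

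Multiplying these two inequalities by $-\mu/q < 0$ and $-1/2^* < 0$ respectively reverses the inequality signs, and adding the resulting estimates to the gradient identity above yields exactly the claimed bound on $F_\mu(u+tU_\varepsilon)$. There is no real obstacle here: the nonnegativity assumption on $u$ and the choice of $U_\varepsilon \geq 0$ were made precisely so that the superadditivity $(a+b)^p \geq a^p + b^p$ applies pointwise, which is the only ingredient beyond the trivial expansion of the Dirichlet term.
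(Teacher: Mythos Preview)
Your proof is correct and follows essentially the same approach as the paper: expand the Dirichlet term exactly, then use the pointwise superadditivity $(a+b)^p \ge a^p + b^p$ for $a,b\ge 0$, $p\ge 1$ on the two nonlinear terms. The paper states these norm inequalities without proving them, so your short justification of the superadditivity is a welcome addition.
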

\begin{proof}	
	We have, for any $\varepsilon >0$ and any $t>0$,
	\begin{align*}
	\normLp{\diff (u + t U_{\varepsilon})}{2}^2 = \normLp{\diff u}{2}^2 + 2 t \intrn \diff u(x) \cdot \diff U_{\varepsilon}(x) \, dx  +   t^2 \normLp{\diff U_{\varepsilon}}{2}^2.
	\end{align*}
	Also, since both $u \in H^1(\R^N)$ and $U_{\var}$ are non negative, 
	\begin{align*}
	\normLp{u + t U_{\varepsilon} }{2^*}^{2^*} & \geq \normLp{u}{2^*}^{2^*} + t^{2^*} \normLp{U_{\varepsilon} }{2^*}^{2^*}
	% \label{eqn:4.1L}
	\end{align*}
	and
	\begin{align*}
	\normLp{u + t U_{\varepsilon} }{q}^{q} & \geq \normLp{u}{q}^{q} + t^{q} \normLp{U_{\varepsilon} }{q}^{q}.
	% \label{eqn:4.2}
	\end{align*}
	Therefore, we obtain that
	\begin{align*}
	\begin{split}
	F_{\mu} (u + t U_{\varepsilon} ) 
	&\leq \dfrac{1}{2} \[ \normLp{\diff u}{2}^2 + 2 t \intrn \diff u(x) \cdot \diff U_{\varepsilon}(x) \, dx  +   t^2 \normLp{\diff U_{\varepsilon}}{2}^2 \] \\
	&- \dfrac{\mu}{q} \[\normLp{u}{q}^{q} + t^{q} \normLp{U_{\varepsilon} }{q}^{q} \] - \dfrac{1}{2^*} \[\normLp{u}{2^*}^{2^*} + t^{2^*} \normLp{U_{\varepsilon} }{2^*}^{2^*} \]\\
	& = F_{\mu}(u) + t \intrn \diff u(x) \cdot \diff U_{\varepsilon}(x) \, dx 
	+ \dfrac{t^2}{2} \normLp{\diff U_{\varepsilon}}{2}^2 - \dfrac{\mu t^{q}}{q} \normLp{U_{\varepsilon} }{q}^{q} - \dfrac{t^{2^*}}{2^*} \normLp{U_{\varepsilon} }{2^*}^{2^*}.
	\end{split}
	\end{align*}
\end{proof}

%%%%%%%%%%%%%%%%%%%%%%%%%%%%%%%%%%%%%%%%%%%%%%%%%%%%%%%%%%%%%%%%%%%%%%%%%%%%%%%%%%%%%%%

%%%%%%%%%%%%%%%%%%%%%%%%%%%%%%%%%%%%%%%%%%%%%%%%%%%%%%%%%%%%%%%%%%%%%%%%%%%%%%%%%%%%%%%

From now and for the rest of this section, we fix a sequence  $(\var_n) \subset \Rp $ such that $\var_n \to 0$.

\begin{lemma}\label{lemma:8L}
Let $N \geq 3$.
	There exists $0 <t_0 < t_1 < \infty$  such that, for any sequence $(u_n) \subset H^1(\R^N)$ satisfying 
	\begin{equation}\label{control}
	\int_{\R^N }\nabla u_n(x) \cdot \nabla U_{\var_n}(x) \, dx  \leq 1, \quad \forall n \in \N,
	\end{equation}
	setting 
	\begin{align*}
	I_n(t):=  t \int_{\R^N }\nabla u_n(x) \cdot \nabla U_{\var_n}(x) \, dx +
	\dfrac{t^2}{2} \normLp{\diff U_{\varepsilon}}{2}^2 - \dfrac{\mu t^{q}}{q} \normLp{U_{\varepsilon} }{q}^{q} - \dfrac{t^{2^*}}{2^*} \normLp{U_{\varepsilon} }{2^*}^{2^*},
	\end{align*}
	we have, for any $n \in \N$ large enough, 
	\begin{enumerate}[label=(\roman*)]
		\item\label{point:lemma8L:1} 
		if $I_n(t) \geq \frac{1}{2N}\mathcal{S}^{\frac{N}{2}}$
		then necessarily $t \geq t_0$,
		\item\label{point:lemma8L:2} $I_n(t) \leq 2 m(c)$ 
		for any $t \geq t_1$.
	\end{enumerate}
\end{lemma}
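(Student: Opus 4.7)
The plan is to derive, for $n$ large, two uniform (in $n$) upper bounds on $I_n(t)$ depending only on $t$, and then to read off $t_0$ and $t_1$ from the asymptotics of these bounds as $t\to 0^+$ and $t\to\infty$ respectively. The only inputs are the condition \eqref{control} and the standard estimates on the truncated Aubin--Talenti bubble (referred to in the introduction as \cref{estimates-U}), in particular
\[
\|\diff U_{\var_n}\|_2^2 \to \mathcal{S}^{N/2}, \qquad \|U_{\var_n}\|_{2^*}^{2^*} \to \mathcal{S}^{N/2}, \qquad \|U_{\var_n}\|_q^q \to 0 \text{ or bounded},
\]
as $\var_n\to 0$. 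In particular, for all $n$ sufficiently large we have $\|\diff U_{\var_n}\|_2^2 \le 2\mathcal{S}^{N/2}$ and $\|U_{\var_n}\|_{2^*}^{2^*} \ge \frac{1}{2}\mathcal{S}^{N/2}$.

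For \ref{point:lemma8L:1}, I would simply drop the two manifestly non-positive terms (the $L^q$ and $L^{2^*}$ terms) and combine \eqref{control} with the first bound above to get, for $n$ large,
\[
I_n(t) \;\le\; t \cdot 1 \;+\; \tfrac{t^2}{2}\,\|\diff U_{\var_n}\|_2^2 \;\le\; t + t^2\,\mathcal{S}^{N/2}, \qquad \forall\, t\ge 0.
\]
The right-hand side is continuous, vanishes at $t=0$, and does not depend on $n$. Hence one can fix $t_0>0$ small enough so that $t_0 + t_0^2\,\mathcal{S}^{N/2} < \tfrac{1}{2N}\mathcal{S}^{N/2}$; then $I_n(t) < \tfrac{1}{2N}\mathcal{S}^{N/2}$ for every $t\in[0,t_0)$, which is the contrapositive of \ref{point:lemma8L:1}.

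For \ref{point:lemma8L:2}, I would keep the Sobolev-critical term (which is the only one that becomes dominant at infinity) and drop only the $L^q$ term. Using once more \eqref{control} and the two extremal estimates above, for $n$ large,
\[
I_n(t) \;\le\; t + t^2\,\mathcal{S}^{N/2} - \tfrac{t^{2^*}}{2\cdot 2^*}\,\mathcal{S}^{N/2}, \qquad \forall\, t\ge 0.
\]
Since $2^* > 2 > 1$, the polynomial on the right tends to $-\infty$ as $t\to\infty$, so one can pick $t_1>t_0$ large enough that this bound lies below $2m(c)$ (which is strictly negative by \cref{thm-1}) for every $t\ge t_1$, uniformly in $n$.

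There is no real obstacle: the only subtlety is to make sure the $t$-thresholds are chosen independently of $n$, which is precisely why I pass through the uniform bounds $\|\diff U_{\var_n}\|_2^2 \le 2\mathcal{S}^{N/2}$ and $\|U_{\var_n}\|_{2^*}^{2^*} \ge \tfrac12\mathcal{S}^{N/2}$ (valid for all $n$ large). The condition \eqref{control} is what makes the linear-in-$t$ term harmless; without it one would have to inspect the sign and size of $\int \diff u_n\cdot\diff U_{\var_n}\,dx$, but here it is bounded above by $1$, and since we only seek upper bounds on $I_n(t)$ for $t>0$, that suffices.
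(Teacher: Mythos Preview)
Your proof is correct and follows essentially the same approach as the paper: for \ref{point:lemma8L:1} you discard the two negative terms and use \eqref{control} together with the convergence $\|\diff U_{\var_n}\|_2^2\to\mathcal{S}^{N/2}$ to get a uniform upper bound that vanishes as $t\to 0$; for \ref{point:lemma8L:2} you additionally retain the dominant $-\frac{t^{2^*}}{2^*}\|U_{\var_n}\|_{2^*}^{2^*}$ term and use $\|U_{\var_n}\|_{2^*}^{2^*}\to\mathcal{S}^{N/2}$ to force the bound to $-\infty$. The only cosmetic difference is that you make the uniform constants ($2\mathcal{S}^{N/2}$, $\tfrac12\mathcal{S}^{N/2}$) explicit, whereas the paper argues directly from the convergence.
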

\begin{proof}
	Observe that
	\begin{align*}
	I_n(t) \leq t \int_{\R^N }\nabla u_n(x) \cdot \nabla U_{\var_n}(x) \, dx + \dfrac{t^2}{2} \normLp{\diff U_{\varepsilon}}{2}^2.
	\end{align*}
	We have that $\normLp{\diff U_{\varepsilon}}{2}^2 \to \cS^{\frac{N}{2}} >0$, see \cref{estimates-U}. Thus in view of \eqref{control}, if $t \to 0$ then $I_n(t) < \frac{1}{2N}\mathcal{S}^{\frac{N}{2}}$. Hence, there exists $t_0 > 0$ such that if  $I_n(t) \geq \frac{1}{2N}\mathcal{S}^{\frac{N}{2}}$ then necessarily $t \geq t_0$ and point \ref{point:lemma8L:1} holds.
	We also have 
	\begin{align*}
	I(t) \leq t \int_{\R^N }\nabla u_n(x) \cdot \nabla U_{\var_n}(x) \, dx  +  \dfrac{t^2}{2} \normLp{\diff U_{\varepsilon}}{2}^2 - \dfrac{t^{2^*}}{2^*} \normLp{U_{\varepsilon} }{2^*}^{2^*}
	\end{align*}
	with $\normLp{\diff U_{\varepsilon}}{2}^2 \to \cS^{\frac{N}{2}} >0$ and $\normLp{U_{\varepsilon} }{2^*}^{2^*} \to \cS^{\frac{N}{2}} >0$, see \cref{estimates-U}. Thus, in view of \eqref{control}, there exists a $t_1 >0$ such that
	$I_n(t) \leq 2 m(c)$, for all $t \geq t_1$, if $n \in \N$ is large enough. Thus point \ref{point:lemma8L:2} also holds.
\end{proof}

%%%%%%%%%%%%%%%%%%%%%%%%%%%%%%%%%%%%%%%%%%%%%%%%%%%%%%%%%%%%%%%%%%%%%%%%%%%%%%%%%%%%%%%

%%%%%%%%%%%%%%%%%%%%%%%%%%%%%%%%%%%%%%%%%%%%%%%%%%%%%%%%%%%%%%%%%%%%%%%%%%%%%%%%%%%%%%%

We define by $\mathcal{M}_c^0$ the set of elements of $\mathcal{M}_c$ which have the properties guarantee by \cref{prop:1}.

\begin{lemma}\label{lemma:9LL}
Let $N \geq 3$, $c \in (0,c_0)$ and $u_c \in \mathcal{M}_c^0$. For any $\varepsilon >0$  there exists a $y_{\varepsilon} \in \R^N$ such that
	\begin{align}\label{keycontrolLL}
	2 \int_{\R^N}u_c( x - y_{\varepsilon}) U_{\varepsilon}(x) \, dx \leq t_1  \normLp{U_{\varepsilon}}{2}^2
	\end{align}
	where $t_1 >0$ is provided by \cref{lemma:8L} and
	\begin{align}\label{keycontrolLLL}
	\int_{\R^N} \nabla u_c( x - y_{\varepsilon}) \cdot \nabla U_{\varepsilon}(x) \, dx \leq  \normLp{U_{\varepsilon}}{2}^2.
	\end{align}
\end{lemma}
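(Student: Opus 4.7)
Fix $\varepsilon > 0$. The plan is to exploit the mismatch of supports: $U_{\varepsilon} = \xi\,u_{\varepsilon}$ is supported in $B_2$ by construction of the cut-off, while the ground state $u_c$ decays at infinity. By translating $u_c$ sufficiently far from the origin, both inner products in \eqref{keycontrolLL} and \eqref{keycontrolLLL} can be driven below any prescribed positive threshold, and in particular below $(t_1/2)\|U_{\varepsilon}\|_2^2$ and $\|U_{\varepsilon}\|_2^2$.

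First I would record the decay information on $u_c$. Since $u_c \in \mathcal{M}_c^0$ is, by \cref{prop:1}, a positive radially non-increasing $L^2$ function, one has $u_c(x) \to 0$ as $|x| \to \infty$. Moreover, the associated Lagrange multiplier satisfies $\lambda_c < 0$ by \cref{lemma:QL}, so elliptic regularity applied to \eqref{eqn: 2.1} gives $u_c \in C^2(\R^N)$ with $|\nabla u_c(x)| \to 0$ as $|x| \to \infty$ as well. Consequently, the nonnegative quantity
\begin{align*}
\omega(R) := \sup_{|z|\geq R}\bigl(u_c(z) + |\nabla u_c(z)|\bigr)
\end{align*}
satisfies $\omega(R) \to 0$ as $R \to \infty$.

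Next, for any $y \in \R^N$ with $|y| \geq R+2$ and any $x \in B_2 \supset \mathrm{supp}(U_{\varepsilon})$, one has $|x-y| \geq R$, so both $u_c(x-y)$ and $|\nabla u_c(x-y)|$ are bounded by $\omega(R)$ on the support of $U_{\varepsilon}$. Cauchy--Schwarz then yields
\begin{align*}
\int_{\R^N} u_c(x - y)\,U_{\varepsilon}(x)\,dx &\leq \omega(R)\,|B_2|^{1/2}\,\|U_{\varepsilon}\|_2, \\
\int_{\R^N} \nabla u_c(x - y)\cdot\nabla U_{\varepsilon}(x)\,dx &\leq \omega(R)\,|B_2|^{1/2}\,\|\nabla U_{\varepsilon}\|_2.
\end{align*}
Since $\|U_{\varepsilon}\|_2 > 0$ is a fixed positive number and $\omega(R) \to 0$, I can choose $R = R_{\varepsilon}$ large enough that the two right-hand sides do not exceed $(t_1/2)\|U_{\varepsilon}\|_2^2$ and $\|U_{\varepsilon}\|_2^2$, respectively. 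Any $y_{\varepsilon} \in \R^N$ with $|y_{\varepsilon}| \geq R_{\varepsilon}+2$ then satisfies \eqref{keycontrolLL} and \eqref{keycontrolLLL}.

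The only step deserving some care is the decay of $\nabla u_c$ at infinity: while $u_c(x) \to 0$ is immediate from $u_c$ being radially non-increasing and $L^2$, the convergence $|\nabla u_c(x)| \to 0$ requires a bootstrap argument on \eqref{eqn: 2.1}. In fact the stronger assertion that both $u_c$ and $|\nabla u_c|$ decay exponentially (using $\lambda_c < 0$ together with a standard comparison principle at infinity) is classical and more than sufficient here. Otherwise the argument is purely qualitative: no sharp decay rate is needed, because the inequalities are required for each fixed $\varepsilon$ separately, not uniformly in $\varepsilon$.
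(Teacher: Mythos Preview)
Your argument is correct. The key observation---that $U_\varepsilon$ has support in $B_2$ while $u_c$ and its gradient tend to zero at infinity---is the right one, and since the lemma is stated for each fixed $\varepsilon$ separately, the qualitative decay $\omega(R)\to 0$ is all that is needed.

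The paper handles the gradient inequality \eqref{keycontrolLLL} differently, and the comparison is worth noting. Instead of invoking elliptic regularity to obtain $|\nabla u_c(x)|\to 0$, the paper tests the equation \eqref{eqn: 2.1} against $U_\varepsilon$: since $u_c(\cdot-y)$ is a nonnegative solution and $\lambda_c<0$, one has
\[
\int_{\R^N}\nabla u_c(x-y)\cdot\nabla U_\varepsilon(x)\,dx \;\leq\; \mu\int_{\R^N} u_c(x-y)^{q-1}U_\varepsilon(x)\,dx + \int_{\R^N} u_c(x-y)^{2^*-1}U_\varepsilon(x)\,dx,
\]
and the right-hand side is then controlled exactly as in the first inequality, using only the pointwise radial decay of $u_c$ (via the Strauss radial lemma $|u_c(z)|\leq C(N)\sqrt{c}\,|z|^{-N/2}$). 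This route is slightly more self-contained: it avoids the bootstrap argument needed for $C^1$ decay of $u_c$, which, while standard, does require some care because of the Sobolev-critical term in \eqref{eqn: 2.1}. On the other hand, your approach is conceptually more direct and makes transparent that the only ingredient really needed is separation of supports.
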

\begin{proof}
	Since $u_c \in \mathcal{M}_c^0$ is a radial, non-increasing function, we know from \cite[Radial Lemma A.IV]{BerestyckiLions1983} that
	\begin{equation}\label{decay-rate}
	|u_c(z)| \leq C(N) |z|^{- \frac{N}{2}} \sqrt{c}, \quad \forall |z| \geq 1,
	\end{equation}
	and thus, 
	\begin{align}\label{decreaseL}
	\int_{\R^N}u_c( x - y) U_{\varepsilon}(x)\, dx &\leq C(N) \sqrt{c} \int_{\R^N}  |x-y|^{- \frac{N}{2}}   U_{\varepsilon}(x)\, dx.
	\end{align}
	Using that the function $U_{\varepsilon}$ is compactly supported in $B_2$, we have that, for $|y|$ large enough,
	\begin{align*}
	\int_{\R^N}|x-y|^{- \frac{N}{2}}U_{\varepsilon}(x) dx \leq \int_{B_2} \Big| \frac{y}{2}\Big|^{- \frac{N}{2}}U_{\varepsilon}(x) dx.
	\end{align*}
	At this point, we deduce from \eqref{decreaseL} that, for $|y|$ large enough,
	\begin{align*}
	\int_{\R^N}u_c( x - y) U_{\varepsilon}(x)\, dx \leq C(N) \sqrt{c} \, \Big|\dfrac{y}{2}\Big|^{- \frac{N}{2}} \int_{\R^N}    U_{\varepsilon}(x)\, dx.
	\end{align*}
	Using H\"{o}lder's inequality, we obtain that $\normLp{U_{\varepsilon}}{1} \leq |B_2|^{\frac{1}{2}} \normLp{U_{\varepsilon}}{2}$ and \eqref{keycontrolLL} follows.
	Now, since for any $y \in \R^N,$ $u_c(\cdot-y) \geq0 $ is solution to the equation 
	\begin{align}
	-\laplace u - \mu \abs{u}^{q-2} u - \abs{u}^{2^*-2} u = \lambda_c u \quad \mbox{in } \Rn,
	\end{align}
	for some $\lambda_c <0$,
	we have that
	\begin{align}
	\begin{split}
	\intrn \diff u_c(x -y) \cdot \diff U_{\varepsilon}(x) \, dx \leq 
	& \, \mu \intrn |u_c(x-y)|^{q -1}   U_{\varepsilon}(x) \, dx 
	+ \intrn |u_c(x -y)|^{2^* -1}   U_{\varepsilon}(x) \, dx.
	\end{split}
	\end{align}
	Since, both $|u_c(z)|^{q-1} \leq |u_c(z)|$ and $|u_c(z)|^{2^*-1} \leq |u_c(z)|$ for $|z|$ large, see \eqref{decay-rate}, reasoning as in the proof of \eqref{keycontrolLL} we readily check that \eqref{keycontrolLLL} also holds.
\end{proof}

%%%%%%%%%%%%%%%%%%%%%%%%%%%%%%%%%%%%%%%%%%%%%%%%%%%%%%%%%%%%%%%%%%%%%%%%%%%%%%%%%%%%%%%
Now we define the sequence $(c_n) \in \displaystyle \[\frac{c}{2}, c\)$ as follows
\begin{equation}\label{definitioncn}
c_n := c - 2 t_1^2 \normLp{U_{\varepsilon_n}}{2}^2
\end{equation}
where $t_1>0$ is given in \cref{lemma:8L}. Clearly, $c_n \to c$ as $n \to \infty$. Now, for each $n \in \N $, we fix a $u_{c_n} \in \mathcal{M}_{c_n}^0$. 

\begin{lemma}\label{lemma:9LLL}
	Under the setting introduced above, for any $n \in \N$ large enough, there exists a $y_n \in \R^N$ such that
	\begin{equation}\label{control-from-above}
	c_n \leq \normLp{ u_{c_n}(\cdot - y_{n}) + t U_{\varepsilon_n}}{2}^2 \leq c, \quad \forall t\in [0,t_1],
	\end{equation}
	and
	\begin{equation}\label{control-from-above-bis}
	\int_{\R^N} \nabla u_{c_n}( x - y_{n}) \cdot \nabla U_{\varepsilon_n}(x) \, dx \leq \max \{1, \normLp{ U_{\varepsilon_n}}{2}^2 \}.
	\end{equation}
\end{lemma}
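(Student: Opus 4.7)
The plan is to expand the squared $L^2$ norm, observe that the lower bound is automatic by non-negativity of the ingredients, and then reduce the upper bound and the gradient pairing estimate to a single requirement: that $|y_n|$ be chosen large enough (depending on $\normLp{U_{\var_n}}{2}$) so that the translated solution $u_{c_n}(\cdot - y_n)$ is sufficiently small on the support of $U_{\var_n}$. The plan then closely mirrors the argument used in \cref{lemma:9LL}, the only new point being that we now let the ground state $u_{c_n}$ and the bubble $U_{\var_n}$ both vary with $n$.

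\textbf{Step 1 (Expansion and lower bound).} Expand
\begin{equation*}
\normLp{u_{c_n}(\cdot - y_n) + t U_{\var_n}}{2}^2 = c_n + 2t \int_{\R^N} u_{c_n}(x-y_n) U_{\var_n}(x)\, dx + t^2 \normLp{U_{\var_n}}{2}^2.
\end{equation*}
Since $u_{c_n}\geq 0$ (from \cref{prop:1}) and $U_{\var_n}\geq 0$, the cross term and the $t^2$ term are non-negative, so the bound from below by $c_n$ is immediate for any $y_n\in\R^N$ and any $t\geq 0$.

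\textbf{Step 2 (Upper bound reduces to a pairing estimate).} Recalling from \eqref{definitioncn} that $c-c_n=2t_1^2\normLp{U_{\var_n}}{2}^2$, and using $t\leq t_1$, I split the control $t^2\normLp{U_{\var_n}}{2}^2\leq t_1^2\normLp{U_{\var_n}}{2}^2$, so it suffices to arrange
\begin{equation*}
2\int_{\R^N} u_{c_n}(x-y_n) U_{\var_n}(x)\, dx \leq t_1\normLp{U_{\var_n}}{2}^2,
\end{equation*}
which is the exact analogue of \eqref{keycontrolLL}. To get this, I use that $u_{c_n}$ is radial non-increasing, hence satisfies the uniform decay $|u_{c_n}(z)|\leq C(N)|z|^{-N/2}\sqrt{c_n}\leq C(N)|z|^{-N/2}\sqrt{c}$ for $|z|\geq 1$, combined with the fact that $U_{\var_n}$ is supported in $B_2$. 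For $|y_n|\geq 4$ and $x\in B_2$ one has $|x-y_n|\geq |y_n|/2$, so Hölder's inequality gives
\begin{equation*}
\int_{\R^N} u_{c_n}(x-y_n) U_{\var_n}(x)\, dx \leq C(N)\sqrt{c}\,(|y_n|/2)^{-N/2}|B_2|^{1/2}\normLp{U_{\var_n}}{2}.
\end{equation*}
Thus choosing $|y_n|\geq K\,\normLp{U_{\var_n}}{2}^{-2/N}$ for a sufficiently large constant $K=K(N,c,t_1)$ makes \eqref{control-from-above} hold; note $|y_n|\to\infty$, which is possible since $\normLp{U_{\var_n}}{2}\to 0$ by \cref{estimates-U} (for $N\geq 3$).

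\textbf{Step 3 (Gradient pairing).} Here I repeat the argument used for \eqref{keycontrolLLL}. Testing the equation $-\Delta u_{c_n} - \mu u_{c_n}^{q-1} - u_{c_n}^{2^*-1} = \lambda_{c_n} u_{c_n}$ (shifted by $y_n$) against $U_{\var_n}$ and exploiting $\lambda_{c_n}<0$ from \cref{lemma:QL} together with non-negativity, I obtain
\begin{equation*}
\int_{\R^N}\nabla u_{c_n}(x-y_n)\cdot\nabla U_{\var_n}(x)\,dx \leq \mu\int_{\R^N} u_{c_n}(x-y_n)^{q-1}U_{\var_n}+\int_{\R^N} u_{c_n}(x-y_n)^{2^*-1}U_{\var_n}.
\end{equation*}
For $|y_n|$ large (as above), the decay estimate forces $u_{c_n}(x-y_n)\leq 1$ on $B_2$, and since $q-1>1$ and $2^*-1>1$, both nonlinear powers are pointwise dominated by $u_{c_n}(x-y_n)$ itself. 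Applying once again the pointwise decay bound and Hölder, the right-hand side is at most $C\,|y_n|^{-N/2}\normLp{U_{\var_n}}{2}$, which tends to $0$. In particular, possibly enlarging $K$, the quantity is bounded by $1\leq \max\{1,\normLp{U_{\var_n}}{2}^2\}$ for all $n$ large, establishing \eqref{control-from-above-bis}.

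\textbf{Main obstacle.} The only non-routine point is ensuring that the constants coming from the decay estimate and from $\normLp{U_{\var_n}}{2}\to 0$ can be matched simultaneously for all $t\in[0,t_1]$ and for both \eqref{control-from-above} and \eqref{control-from-above-bis}; this is handled by noticing that the decay bound $|u_{c_n}(z)|\leq C(N)|z|^{-N/2}\sqrt{c}$ is uniform in $n$ (because $c_n\leq c$ and the bound depends on the solution only through its radial non-increasing structure and its $L^2$ mass), so a single choice $|y_n|\simeq \normLp{U_{\var_n}}{2}^{-2/N}$ with sufficiently large prefactor handles all inequalities at once.
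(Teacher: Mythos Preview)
Your proof is correct and follows essentially the same route as the paper: the paper simply invokes \cref{lemma:9LL} (applied with $u_c = u_{c_n}$ and $\varepsilon = \varepsilon_n$) to obtain the pairing estimates \eqref{keycontrolLL}--\eqref{keycontrolLLL}, then expands the $L^2$ norm exactly as you do in Step~1--2 and uses the definition \eqref{definitioncn} of $c_n$. Your write-up is in fact more careful than the paper's on the one genuinely new point, namely the uniformity in $n$ of the radial decay constant (since $c_n \leq c$), which the paper leaves implicit.
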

\begin{proof}
	For any $n \in \N$, according to \cref{lemma:9LL}, we can choose a $y_n \in \R^N$ such that
	\begin{align*}
	2 \int_{\R^N}u_{c_n}( x - y_{n}) U_{\varepsilon_n}(x) \, dx \leq t_1 \normLp{ U_{\varepsilon_n}}{2}^2.
	\end{align*}
	Now, for $t \in [0, t_1]$, we have, 
	\begin{align*}
	\normLp{ u_{c_n}(\cdot - y_n) + t U_{\varepsilon_n}}{2}^2 &= \normLp{ u_{c_n}(\cdot - y_n)}{2}^2 + 2t \int_{\R^N} u_{c_n}(x - y_n) U_{\varepsilon_n}(x) \,dx + t^2 \normLp{ U_{\varepsilon}}{2}^2 \\
	& \leq c_n + 2t_1 \int_{\R^N} u_{c_n}(x - y_n) U_{\varepsilon_n}(x) \, dx + t_1^2 \normLp{ U_{\varepsilon_n}}{2}^2 \\
	&\leq c_n +  2 t_1^2 \normLp{ U_{\varepsilon_n}}{2}^2 = c
	\end{align*}
	where for the last inequality we have used the definition of $c_n$ given in \eqref{definitioncn}. The first inequality in \eqref{control-from-above} is obvious by the positivity of $u_{c_n}$ and $U_{\varepsilon_n}$. Finally note that \eqref{control-from-above-bis} directly holds in view of \eqref{keycontrolLLL} and since $\normLp{ U_{\varepsilon_n}}{2}^2 \to 0$ as $n \to \infty.$
\end{proof}

%%%%%%%%%%%%%%%%%%%%%%%%%%%%%%%%%%%%%%%%%%%%%%%%%%%%%%%%%%%%%%%%%%%%%%%%%%%%%%%%%%%%%%%

\begin{lemma}\label{def-gamma-n}
	Under the setting introduced above,  we define,
	\begin{align*}  
	\gamma_n(t) =
	\begin{cases} 
	\displaystyle u_{c_n}(\cdot - y_n) + t U_{\var_n} \quad \mbox{if} \quad t \in [0,t_1],\\
	\displaystyle \gamma_n(t_1) \quad \mbox{if} \quad t \geq t_1.
	\end{cases}
	\end{align*}
	Then $\gamma_n \in \mathcal{G}(c)$, for any $n \in \N$ large enough. In addition
	\begin{equation}\label{key}
	M(c) \leq \max_{t \in [0, \infty)} F_{\mu}(\gamma_n(t)) \leq \max \Big\{ \max_{t \in [t_0,t_1]}F_{\mu}(\gamma_n(t)), \,
	m(c) + \frac{2}{3N}\mathcal{S}^{\frac{N}{2}} \Big\}.
	\end{equation}
\end{lemma}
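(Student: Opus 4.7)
The plan is to verify the two claims in turn: first that $\gamma_n \in \mathcal{G}(c)$ for $n$ large, and second, the sandwich estimate \eqref{key}. All the analytic work has been packaged into \cref{lemma:6}, \cref{lemma:7L}, \cref{lemma:8L}, \cref{lemma:9LL} and \cref{lemma:9LLL}, so the task reduces to assembling these ingredients correctly.

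To show $\gamma_n \in \mathcal{G}(c)$, continuity is immediate from the piecewise definition. The inclusion $\gamma_n(t) \in \cup_{d \in [c/2, c]} S(d)$ follows from \cref{lemma:9LLL}, which gives $\normLp{\gamma_n(t)}{2}^2 \in [c_n, c]$ for $t \in [0, t_1]$, coupled with $c_n = c - 2 t_1^2 \normLp{U_{\varepsilon_n}}{2}^2 \to c$, so that $c_n \geq c/2$ for $n$ large. The initial condition $\gamma_n(0) = u_{c_n}(\cdot - y_n) \in \mathcal{M}_{c_n}$ holds by the translation invariance of the minimizer set. Finally, to verify $\gamma_n(t) \in E_c$ for $t \geq t_1$, I would combine \cref{lemma:7L} with \cref{lemma:8L}\ref{point:lemma8L:2} to get
\[
F_{\mu}(\gamma_n(t_1)) \leq F_{\mu}(u_{c_n}) + I_n(t_1) \leq m(c_n) + 2 m(c),
\]
and since $m(c_n) \to m(c) < 0$, the right-hand side is strictly less than $2 m(c)$ for $n$ large.

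For the estimate \eqref{key}, the left inequality $M(c) \leq \max_{t} F_{\mu}(\gamma_n(t))$ is an immediate consequence of \cref{lemma:4} applied to $\gamma_n \in \mathcal{G}(c)$. For the right inequality, I would split by the range of $t$: on $[t_1, \infty)$ the function $\gamma_n$ is constant so $F_{\mu}(\gamma_n(t)) = F_{\mu}(\gamma_n(t_1))$, which is absorbed into the $[t_0, t_1]$ piece; on $[t_0, t_1]$ the bound is tautological. The substance is the bound on $[0, t_0]$. There \cref{lemma:7L} gives $F_{\mu}(\gamma_n(t)) \leq F_{\mu}(u_{c_n}) + I_n(t)$, and \cref{lemma:9LLL} ensures $\intrn \diff u_{c_n}(x - y_n) \cdot \diff U_{\varepsilon_n}(x)\, dx \leq \max\{1, \normLp{U_{\varepsilon_n}}{2}^2\} \leq 1$ for $n$ large, so \cref{lemma:8L} is applicable. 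The contrapositive of \cref{lemma:8L}\ref{point:lemma8L:1} yields $I_n(t) < \frac{1}{2N} \mathcal{S}^{\frac{N}{2}}$ for all $t \in [0, t_0]$; meanwhile \cref{lemma:6} applied with $\alpha = c - c_n = 2 t_1^2 \normLp{U_{\varepsilon_n}}{2}^2$ (which tends to $0$) gives $m(c_n) \leq m(c) + \frac{1}{6N} \mathcal{S}^{\frac{N}{2}}$ for $n$ large. Adding these two estimates produces $F_{\mu}(\gamma_n(t)) < m(c) + \frac{2}{3N} \mathcal{S}^{\frac{N}{2}}$, which is exactly the required bound.

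The main obstacle I foresee is the membership $\gamma_n(t_1) \in E_c$, since $E_c \subset S(c)$ strictly imposes the mass constraint. The delicate point is that $c_n$, $y_n$ and $t_1$ are calibrated together (via \cref{lemma:9LL}, \cref{lemma:9LLL} and the choice of $c_n$) so that the $L^2$-norm stays in the admissible range $[c/2, c]$ while the terminal energy still beats $2 m(c)$; any residual discrepancy $\normLp{\gamma_n(t_1)}{2}^2 < c$ can be absorbed via the rescaling procedure used in the proof of \cref{lemma:4}, which only lowers the energy along the path. Once this technical point is handled, the remaining inequalities are a clean assembly of the preparatory lemmas.
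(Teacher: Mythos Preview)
Your proposal is correct and follows essentially the same route as the paper's proof: continuity and mass range from \cref{lemma:9LLL}, the energy bounds from \cref{lemma:7L} combined with \cref{lemma:8L} (using \eqref{control-from-above-bis} to verify the hypothesis \eqref{control}), and the left inequality in \eqref{key} from \cref{lemma:4}. Two minor remarks: the paper handles the $[0,t_0]$ bound by writing $m(c_n) + \frac{1}{2N}\mathcal{S}^{N/2} \leq m(c) + \frac{2}{3N}\mathcal{S}^{N/2}$ ``since $c_n \to c$'', implicitly using left-continuity of $m$ (which follows from \cref{lemma:6} and monotonicity), whereas you invoke \cref{lemma:6} directly---both are fine; and the subtlety you flag about $\gamma_n(t_1) \in E_c$ versus $\normLp{\gamma_n(t_1)}{2}^2 \leq c$ is real and is indeed absorbed by the rescaling in \cref{lemma:4}, which the paper leaves implicit.
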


\begin{proof}
	Let $n \in \N$ be arbitrary fixed, large. To show that $\gamma_n \in \mathcal{G}(c)$ we first observe that $\gamma_n \in C([0, \infty), H^1(\R^N)).$ Also, by \eqref{control-from-above}, we get that
	$$ \gamma_n(t) \subset \bigcup_{d \in \[\frac{c}{2}, c\]}S(d)$$
	since $c_n \to c$. Now, note that by \cref{lemma:7L}
	\begin{align} \label{star1}
	F_{\mu} (\gamma_n(t) )  \leq  m(c_n) +  t \intrn \diff u_{c_n}(\cdot -y_n) \cdot\diff U_{\varepsilon_n}(x) \, dx 
	+\dfrac{t^2}{2} \normLp{\diff U_{\varepsilon_n}}{2}^2 - \dfrac{\mu t^{q}}{q} \normLp{U_{\varepsilon_n} }{q}^{q} - \dfrac{t^{2^*}}{2^*} \normLp{U_{\varepsilon_n} }{2^*}^{2^*}.
	\end{align}
	In view of \eqref{control-from-above-bis}   we can apply \cref{lemma:8L} to deduce that, for $t \geq t_1$
	\begin{equation}\label{star2}
	F_{\mu}(\gamma_n(t)) \leq m(c_n) + 2 m(c) \leq 2 m(c).
	\end{equation}
	We conclude that $\gamma_n \in \mathcal{G}(c)$ for any $n \in \N$ large enough. In particular the first inequality in \eqref{key} holds because of \cref{lemma:4}. Now, considering again \eqref{star1} and recording that \eqref{control-from-above-bis} apply, we deduce from \cref{lemma:8L} that, if $t \in [0,t_0]$,
	\begin{equation}\label{star3}
	F_{\mu}(\gamma_n(t)) \leq m(c_n) + \frac{1}{2N} \mathcal{S}^{\frac{N}{2}} \leq m(c) + \frac {2}{3N} \mathcal{S}^{\frac{N}{2}}
	\end{equation}
	since $c_n \to c$. Gathering \eqref{star2} and \eqref{star3} we see that the second inequality in \eqref{key} holds.
\end{proof}

%\begin{remark}\label{frombelow1}
%Combining \cref{lemma:4}, \cref{lemma:8L} and \cref{lemma:9LL} we have that, for $n \in \N$ large enough,
%\begin{align}\label{Icontrol}
%		M(c) \leq \max_{t \in [t_0,t_1]}F_{\mu} (u_{c_n}(\cdot- y_n) + t U_{\varepsilon_n}).
%	\end{align}
%\end{remark}

%%%%%%%%%%%%%%%%%%%%%%%%%%%%%%%%%%%%%%%%%%%%%%%%%%%%%%%%%%%%%%%%%%%%%%%%%%%%%%%%%%%%%%%
\begin{proof}[Proof of \cref{prop:5}]
	We assume the setting above. In view of \cref{def-gamma-n} to show that
	$$M(c) < m(c) + \frac{1}{N}\mathcal{S}^{\frac{N}{2}}$$
	it suffices to show that
	$$
	\max_{t \in [t_0,t_1]}F_{\mu} (u_{c_n}(\cdot- y_n) + t U_{\varepsilon_n}) < m(c) + \frac{1}{N}\mathcal{S}^{\frac{N}{2}}.
	$$
	From  \cref{lemma:6}, \cref{lemma:7L}, \cref{lemma:9LLL} and the definition of $c_n$ given in \eqref{definitioncn}  we can write
	\begin{align*}
	\begin{split}
	&\max_{t \in [t_0,t_1]}F_{\mu} (u_{c_n}(\cdot -y_n) + t U_{\varepsilon_n} )  \\
	\leq \, &m(c_n) + \max_{t \in [t_0,t_1]} \[ t \normLp{ U_{\varepsilon}}{2}^2 + \dfrac{t^2}{2} \normLp{\diff U_{\varepsilon}}{2}^2  - \dfrac{\mu t^{q}}{q} \normLp{ U_{\varepsilon}}{q}^q - \dfrac{t^{2^*}}{2^*} \normLp{ U_{\varepsilon}}{2^*}^{2^*} \]
	\\
	\leq \, &m(c) + d(c  - c_n) + t_1 \normLp{ U_{\varepsilon}}{2}^2 
	+ \max_{t \in [t_0,t_1]} \[\dfrac{t^2}{2} \normLp{\diff U_{\varepsilon}}{2}^2 - \dfrac{\mu t^{q}}{q} \normLp{ U_{\varepsilon}}{q}^q - \dfrac{t^{2^*}}{2^*} \normLp{ U_{\varepsilon}}{2^*}^{2^*} \] \\
	\leq \, &m(c) +   2t_1^2 d  \normLp{U_{\varepsilon_n}}{2}^2  + t_1 \normLp{ U_{\varepsilon}}{2}^2 
	+ \max_{t \in [t_0,t_1]} \[\dfrac{t^2}{2} \normLp{\diff U_{\varepsilon}}{2}^2 - \dfrac{\mu t^{q}}{q} \normLp{ U_{\varepsilon}}{q}^q - \dfrac{t^{2^*}}{2^*} \normLp{ U_{\varepsilon}}{2^*}^{2^*} \].
	\end{split} 
	\end{align*}
	To complete the proof it suffices to show, for $n \in \N$ sufficiently large,
	\begin{align*}
	J_n:=  (2t_1^2 d + t_1)  \normLp{U_{\varepsilon_n}}{2}^2 + \max_{t \in [t_0,t_1]} \[\dfrac{t^2}{2} \normLp{\diff U_{\varepsilon}}{2}^2 - \dfrac{\mu t^{q}}{q} \normLp{ U_{\varepsilon}}{q}^q - \dfrac{t^{2^*}}{2^*} \normLp{ U_{\varepsilon}}{2^*}^{2^*} \]
	< \frac{1}{N}\mathcal{S}^{\frac{N}{2}}.
	\end{align*}
	In this aim first note that
	$$J_n \leq  (2t_1^2 d + t_1)  \normLp{U_{\varepsilon_n}}{2}^2 + \max_{t >0} \[\dfrac{t^2}{2} \normLp{\diff U_{\varepsilon}}{2}^2  - \dfrac{t^{2^*}}{2^*} \normLp{ U_{\varepsilon}}{2^*}^{2^*} \] - \dfrac{\mu t_0^{q}}{q} \normLp{ U_{\varepsilon}}{q}^q. $$
	But it holds, in view of the estimates of \cref{estimates-U}\ref{point:estimates-U:1},
	that
	\begin{equation*}
	%\label{SecondX1}
	\max_{t >0} \[\dfrac{t^2}{2} \normLp{\diff U_{\varepsilon}}{2}^2  - \dfrac{t^{2^*}}{2^*} \normLp{ U_{\varepsilon}}{2^*}^{2^*}  \]  = 
	\frac{1}{N}\mathcal{S}^{\frac{N}{2}} + O(\var_n^{N-2}).
	\end{equation*}
	Summarizing, the proof of  \cref{prop:5} will be completed if we manage to show that, for $n \in \N$ large enough,
	\begin{equation*}
	%\label{SecondX2}
	(2t_1^2 d + t_1)  \normLp{U_{\varepsilon_n}}{2}^2  - \dfrac{\mu t_0^{q}}{q} \normLp{ U_{\varepsilon}}{q}^q 
	+  O(\varepsilon_n^{N-2})  < 0.
	\end{equation*}
	At this point we distinguish two cases. \medskip
	
	\noindent\textbf{\underline{Case 1} $N \geq 5$}: By \cref{estimates-U} we have that, for some $K_1 >0, K_2 >0$,
	as $n \to \infty$,
	$$\normLp{U_{\varepsilon_n}}{2}^2 = K_1 \varepsilon_n^2  + o(\varepsilon_n^{N-2}) \quad \mbox{and} \quad  \normLp{U_{\varepsilon_n}}{q}^q = K_2 \varepsilon_n^{N - \frac{(N-2)q}{2}}  + o(\varepsilon_n^{N - \frac{(N-2)q}{2}}).$$
	Thus, for some constants, $\tilde{K}_1 >0$, $\tilde{K}_2 >0$, for $n \in \N$ sufficiently large,
	\begin{align*}
	(2t_1^2 d + t_1)  \normLp{U_{\varepsilon_n}}{2}^2 - \dfrac{\mu t_0^{q}}{q} \normLp{ U_{\varepsilon}}{q}^q  + O(\varepsilon_n^{N-2})  =
	\tilde{K}_1 \varepsilon_n^2  - \tilde{K}_2 \, \varepsilon_n^{N - \frac{(N-2)q}{2}}  + O(\varepsilon_n^{N-2})
	\leq - \frac{\tilde{K}_2}{2} \, \varepsilon_n^{N - \frac{(N-2)q}{2}} <0
	\end{align*}
	since $N - \frac{(N-2)q}{2} < \min \{N-2, 2 \} \Longleftrightarrow q >2.$  \medskip
	
	\noindent\textbf{\underline{Case 2} $N =4 $}: By \cref{estimates-U} we have that
	as $n \to \infty$, for some $K_4 >0$,
	$$ \normLp{U_{\varepsilon_n} }{2}^{2} =  \omega\var_n^2 |\log \var_n| + O(\var_n^2) \quad \mbox{and} \quad  \normLp{U_{\varepsilon_n} }{q}^{q} =  K_4 \var_n^{4-q} + o(\var_n^{4-q}).$$
	Thus, for some constants, $\tilde{K}_3 >0$, $\tilde{K}_4 >0$, for $n \in \N$ sufficiently large, 
	\begin{align*}
	(2t_1^2 d + t_1) \normLp{U_{\varepsilon_n}}{2}^2 - \dfrac{\mu t_0^{q}}{q} \normLp{ U_{\varepsilon}}{q}^q  + O(\varepsilon_n^{2})  =
	\tilde{K}_3 \, \varepsilon_n^2  |\log \var_n| - \tilde{K}_4 \, \varepsilon_n^{4-q}  + O(\varepsilon_n^{2})
	\leq - \frac{\tilde{K}_4}{2} \, \varepsilon_n^{4-q} <0
	\end{align*}
	since $4-q < 2$. \medskip	
	In view of Cases 1 and 2 we  deduce that the conclusion of \cref{prop:5} holds if $N \geq 4$.
\end{proof}

\begin{remark}\label{lien-tarantello}
Our analysis of the interaction between a solution characterized as a local minima and a suitable family of truncated extremal functions for the Sobolev inequality reminds us of the approach developed by G. Tarantello in \cite{Tarantello92}. However, in \cite{Tarantello92}, the extremal functions are located in a set where the local minima solution takes its greater values. The idea being to prove, through delicate estimates, that this interaction does decrease the mountain pass value of the associated functional with respect to the case where the two supports would be fully disjoint. Here, on the contrary, our construction aims at separating sufficiently the regions where the functions concentrate and to show that the remaining interaction (remember our functions $u_c \in S(c)$ ly on all $\R^N$) can be assumed sufficiently small. 
\end{remark}
%%%%%%%%%%%%%%%%%%%%%%%%%%%%%%%%%%%%%%%%%%%%%%%%%%%%%%%%%%%%%%%%%%%%%%%%%%%%%%%%%%%%%%%

\section{Proofs of main theorems and additional properties }\label{Section6}

\begin{proof}[Proof of \cref{thm-main}]
The proof of \cref{thm-main} follows directly combining \cref{{prop:3}}, \cref{prop:4} and \cref{prop:44}.
\end{proof}

\begin{proof}[Proof of \cref{thm-additional}]
	For $\mu >0$ fixed, let us prove that 
	\begin{equation}\label{limit}
	F_{\mu}(v_c) \to \dfrac{\mathcal{S}^{\frac{N}{2}}}{N} \quad \mbox{as} \quad  c \to 0.
	\end{equation}
	First,  using that $Q_{\mu}(v_c) = 0$,  we can write, using the Gagliardo-Nirenberg inequality (\ref{Gagliardo-Nirenberg-I})
	\begin{align}\label{ajout-louis1}
	\begin{split}
	F_{\mu}(v_c) &= \dfrac{1}{N} \normLp{\diff v_c}{2}^2 - \dfrac{\mu}{q} \Big( 1 - \frac{q \gamma_q}{2^*} \Big) \normLp{u_n}{q}^q \\
	&\geq \dfrac{1}{N} \normLp{\diff v_c}{2}^2 - \dfrac{\mu}{q} C_{N,q}^q \Big( 1 - \frac{q \gamma_q}{2^*} \Big) c^{(1- \gamma_q)q} \, \normLp{\diff v_c}{2}^{q \gamma_q}.
	\end{split}
	\end{align} 
	Since
	\begin{equation}\label{lastequation}
	F_{\mu}(v_c) = M^0(c) < m(c) + \dfrac{\mathcal{S}^{\frac{N}{2}}}{N} \leq \dfrac{\mathcal{S}^{\frac{N}{2}}}{N} 
	\end{equation}
	and $q \gamma_q <2$ we deduce that $(v_c) \subset \Hsr{1}$ is uniformly bounded with respect to $c \in (0, c_0).$ Thus, still using the Gagliardo-Nirenberg inequality, we deduce that  
	\begin{align}\label{ajout-louisx}
	\normLp{v_c}{q}^q \leq C_{N,q} \normLp{v_c}{2}^{q(1-\gamma_q)} \normLp{\diff v_c}{2}^{q\gamma_q} \to 0 \quad\mbox{as } c \to 0. 
	\end{align}
	Hence, recording that $Q_{\mu}(v_c)=0$, we have that
	\begin{align*}
	\ell := \lim_{c\to 0} \normLp{\diff v_c}{2}^2 = \lim_{c\to 0} \normLp{v_c}{2^*}^{2^*} \leq \dfrac{1}{\cS^{\frac{2^*}{2}}} \lim_{c\to 0} \normLp{\diff v_c}{2}^{2^*} = \dfrac{1}{\cS^{\frac{2^*}{2}}} \ell^{\frac{2^*}{2}}.
	\end{align*}
	Therefore, either $\ell = 0$ or $\ell \geq \cS^{\frac{N}{2}}$. We claim that $\ell = 0$  is impossible. Indeed, since  $v_c \in \slm$, we have that
	\begin{align}
	\normLp{\diff v_c}{2}^2 - \mu \gamma_q \normLp{v_c}{q}^q - \normLp{v_c}{2^*}^{2^*} = 0 \label{eqn:lemma16:1}
	\end{align}
	and 
	\begin{align}
	\normLp{\diff v_c}{2}^2 - \mu \gamma_q (q\gamma_q - 1) \normLp{v_c}{q}^q - (2^*-1) \normLp{v_c}{2^*}^{2^*} \leq 0. \label{eqn:lemma16:2}
	\end{align}
	Combining \eqref{eqn:lemma16:1}, \eqref{eqn:lemma16:2} and using the Sobolev inequality, we get 
	\begin{align*}
	\normLp{\diff v_c}{2}^2 \leq \dfrac{2^*-q\gamma_q}{2 - q\gamma_q} \normLp{v_c}{2^*}^{2^*} \leq \dfrac{2^*-q\gamma_q}{2 - q\gamma_q} \dfrac{1}{\cS^{\frac{2^*}{2}}} \normLp{\diff v_c}{2}^{2^*}
	\end{align*}
	proving the claim. 
	
	At this point, in view of (\ref{ajout-louisx}), using that $Q_{\mu}(v_c)=0$, we have 
	\begin{align*}
	M^0(c) = F_{\mu} (v_c) =  \dfrac{1}{N} \normLp{\diff v_c}{2}^2 - \dfrac{\mu}{q} \(1- \dfrac{q\gamma_q}{2^*}\) \normLp{v_c}{q}^q
	= \dfrac{1}{N} \normLp{\diff v_c}{2}^2 + o_c(1)
	\geq \dfrac{\mathcal{S}^{\frac{N}{2}}}{N} + o_c(1),
	\end{align*}
	where $o_c(1) \to 0$ as $c \to 0$.
	Taking into account that $m(c) \to 0$ as $c \to 0$, see \cref{maps}, and that
	\begin{align*}
	M^0(c) < m(c) + \dfrac{\mathcal{S}^{\frac{N}{2}}}{N}
	\end{align*}
	we obtain (\ref{limit}). Clearly also the above proof shows that $\normLp{\diff v_c}{2}^2  \to \mathcal{S}^{\frac{N}{2}}.$ This proves 
	 \ref{point:thm-additional:1}. 
	
	To show that \ref{point:thm-additional:2} holds we start to observe that, since $c_0(\mu) \to \infty$ as $\mu \to 0$, see \cref{playbetweenparameter}, $v_c \in S(c)$ exists for any $\mu \to 0$ sufficiently small. Now, \eqref{ajout-louis1}- \eqref{lastequation} imply that $(v_c ) \subset \Hsr{1}$ is uniformly bounded as $\mu \to 0$ and thus, using the Gagliardo-Nirenberg inequality, we have that 
	\begin{align*}
	%\label{ajout-louis2}
	\mu \normLp{v_c}{q}^q \leq \mu C_{N,q} \normLp{v_c}{2}^{q(1-\gamma_q)} \normLp{\diff v_c}{2}^{q\gamma_q} \to 0 \quad\mbox{as } \mu \to 0. 
	\end{align*}
	From here the rest of the proof is identical to the one of \ref{point:thm-additional:1}.
\end{proof}

\begin{proof}[Proof of \cref{thm-secondary}]
	We use \cite[Theorem 1.6]{Soave2020Sobolevcriticalcase} which in our notation reads as
	
	\begin{theorem}\label{Soaveblowup}
		Under the assumptions of \cref{thm-main}, let $u \in S(c)$ be such that $F_{\mu}(u) < \inf_{\Lambda^+(c)}F_{\mu}.$ Then, if $s_u^+ <1$ and $|x|u \in L^2(\R^N, \C)$, the solution $\phi$ of (\ref{NLS0}) with initial datum $u$ blows-up in finite time.
	\end{theorem}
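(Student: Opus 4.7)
The plan is to prove blow-up via the classical Glassey virial argument. Since $|x|u\in L^2(\Rn,\C)$, the variance $V(t):=\intrn|x|^2|\phi(t,x)|^2\,dx$ is finite on the maximal existence interval $[0,T_{\max})$, and a direct computation from \eqref{NLS0} using integration by parts yields the virial identity
\[
V''(t)=8\,Q_\mu(\phi(t)).
\]
The whole task then reduces to showing that $Q_\mu(\phi(t))\le-\delta$ for some $\delta>0$ uniformly on $[0,T_{\max})$: once this is in hand, integrating twice gives $V(t)\le V(0)+V'(0)\,t-4\delta t^2$, which must become negative in finite time, contradicting $V\ge 0$ and thus forcing $T_{\max}<\infty$.

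The first step is to propagate the inequality $\smu<1$ along the flow. By conservation of energy and mass, $\phi(t)\in S(c)$ and $F_\mu(\phi(t))=F_\mu(u)<\inf_{\slm}F_\mu$ for all $t\in[0,T_{\max})$. Since $t\mapsto\phi(t)$ is continuous in $\Hs{1}$ and the map $u\mapsto\smu$ is $C^1$ on $S(c)$ by \cref{lemma:14}\ref{point:lemma14:3}, if $s^+_{\phi(t)}$ ever reached $1$ there would be a first such time $t^\star>0$; at that instant $\phi(t^\star)\in\slm$ would give $F_\mu(u)=F_\mu(\phi(t^\star))\ge\inf_{\slm}F_\mu$, a contradiction. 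Hence $s^+_{\phi(t)}<1$ throughout the lifespan, and \cref{lemma:14}\ref{point:lemma14:2} then yields the strict pointwise inequality $Q_\mu(\phi(t))=\psi'_{\phi(t)}(1)<0$.

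The second step, which is the main obstacle, is to upgrade this strict negativity into a uniform quantitative bound. Setting $\alpha:=\inf_{\slm}F_\mu-F_\mu(u)>0$, the fact that $\phi(t)_{s^+_{\phi(t)}}\in\slm$ together with $\psi_{\phi(t)}(1)=F_\mu(u)$ gives the uniform energy gap $\psi_{\phi(t)}(s^+_{\phi(t)})-\psi_{\phi(t)}(1)\ge\alpha$ for every $t$. I would convert this into $-Q_\mu(\phi(t))\ge c(\alpha)>0$ by a contradiction/compactness argument: if $Q_\mu(\phi(t_n))\to 0$ along some sequence, one uses the conservation laws together with the energy gap to extract an $\Hs{1}$ weak limit $w$ satisfying $Q_\mu(w)=0$ and $F_\mu(w)\le F_\mu(u)$; then the sign analysis of \cref{lemma:14}, combined with $s^+_{\phi(t_n)}<1$, forces $w\in\slm$, hence $F_\mu(w)\ge\inf_{\slm}F_\mu$, contradicting $F_\mu(u)<\inf_{\slm}F_\mu$. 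The genuinely delicate point is that in the Sobolev-critical setting, conservation of energy and mass alone does not prevent concentration along $(\phi(t_n))$, so the gap $\alpha>0$ together with the pointwise constraint $s^+_{\phi(t)}<1$ must be exploited jointly to rule out a bubble in the weak limit; this refined analysis, developed in \cite{Soave2020Sobolevcriticalcase}, is what makes the argument go through and delivers the Glassey blow-up.
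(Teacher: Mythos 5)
First, note what you are being measured against: the paper does not prove \cref{Soaveblowup} at all --- it is imported verbatim from \cite[Theorem 1.6]{Soave2020Sobolevcriticalcase} and used as a black box in the proof of \cref{thm-secondary}, so the relevant benchmark is Soave's argument. Your skeleton agrees with it up to the decisive step: the virial identity $V''(t)=8\,Q_\mu(\phi(t))$ (the constant is right, since $\gamma_{2^*}=1$), the flow-invariance of the set where $F_\mu<\inf_{\slm}F_\mu$ and $s^+<1$ via conservation of mass and energy, the continuity of $u\mapsto \smu$ from \cref{lemma:14}\ref{point:lemma14:3}, the observation that $s^+_{\phi(t^\star)}=1$ would force $\phi(t^\star)\in\slm$, and the reduction to a uniform bound $Q_\mu(\phi(t))\le -\delta$ followed by Glassey's concavity argument are all correct.

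The genuine gap is your Step 2. The compactness/contradiction scheme does not close: a weak $H^1$-limit $w$ of $\phi(t_n)$ may lose mass (so $w\notin S(c)$, possibly $w=0$), neither $Q_\mu$ nor $F_\mu$ passes to the weak limit at the Sobolev-critical exponent, the functions $\phi(t_n)$ are not radial so the compact embedding exploited elsewhere in the paper is unavailable, and the constraint $s^+_{\phi(t_n)}<1$ gives no information in the limit. You acknowledge these obstructions but defer them to a ``refined analysis developed in \cite{Soave2020Sobolevcriticalcase}'', which mischaracterizes that proof: Soave's instability argument contains no compactness step; as the present paper itself emphasizes, blow-up is detected ``just by considering the 1-variable function $\psi_u$''. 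The missing quantitative bound is elementary and pointwise in $t$: for $v\in S(c)$ with $s_v^+<1$, the case analysis in the proof of \cref{lemma:14}\ref{point:lemma14:3} shows $\psi_v''<0$ on $[s_v^+,\infty)$, so by concavity of $\psi_v$ there, comparing with the tangent line at $s=1$ and using $\psi_v'(1)=Q_\mu(v)<0$, $0<1-s_v^+<1$ and $v_{s_v^+}\in\slm$,
\begin{equation*}
Q_\mu(v)\ \le\ (1-s_v^+)\,Q_\mu(v)\ \le\ \psi_v(1)-\psi_v(s_v^+)\ \le\ F_\mu(v)-\inf_{u\in\slm}F_\mu(u).
\end{equation*}
Applied to $v=\phi(t)$, conservation laws and your Step 1 give $Q_\mu(\phi(t))\le F_\mu(u)-\inf_{\slm}F_\mu=-\alpha<0$ uniformly on the lifespan, which is exactly the $-\delta$ you need; with this replacement the proof is complete, whereas as written its central step rests on an argument that would fail.
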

	Since, by \cref{various-characterization}
	\begin{align*}
			F_{\mu}(v_c) = \inf_{u \in \slm} F_{\mu}(u)
		\end{align*}
		and $\psi_{v_c}(s)$ has a unique global maximum at $s^{\star} = 1$, see \cref{lemma:14}, we have that $v_t := (v_c)_t$ satisfies
		$$ F_{\mu}(v_t) < \inf_{u \in \slm} F_{\mu}(u)$$
		for any $t>1$. Clearly $s_{v_t}^+ = \frac{1}{t} < 1$ for any $t>1$ and also $v_t \to v_c$ as $t \to 1^+$.
		Now, since $\lambda_c <0,$ by a classical decay argument, we have that $v_c$ and thus $v_t$ satisfies
		$|x|v_t \in L^2(\R^N, \C)$. At this point, applying  \cref{Soaveblowup}, we deduce that $v_c \in S(c)$ gives rise to an unstable standing wave.  See for more details \cite[Theorem 1.3]{Soave2020Sobolevcriticalcase} or \cite[Theorem 1.4]{Soave2020}.

\end{proof}

\begin{remark}
	\cref{Soaveblowup}, see also \cite[Theorem 1.13]{Soave2020}, is remarkable because it permits to detect a finite time blow-up occurs just by considering the 1-variable function $\psi_u$. We refer, for earlier results on the link between the variational characterization of a solution and its instability, to the classical paper \cite{BerestyckiCazenave1981}, and to \cite{Lecoz2008} for more recent developments.
\end{remark}

%%%%%%%%%%%%%%%%%%%%%%%%%%%%%%%%%%%%%%%%%%%%%%%%%%%%%%%%%%%%%%%%%%%%%%%%%%

\section{Appendix} \label{Section7}

Let $N \geq 3$,  $u_{\varepsilon}$ be the extremal functions for the Sobolev inequality in $\Rn$ defined in \eqref{Def-extremal}
and $\xi \in C_0^{\infty}(\R^N)$ be a radially non-increasing cut-off function with $\xi \equiv 1$ in $B_1$, $\xi \equiv 0$ in $\R^N \backslash B_2$.

\begin{lemma}\label{estimates-U}
	Setting $U_{\varepsilon}:= \xi u_{\var}$ and denoting by $\omega$ the area of the unit sphere in $\R^N$, we have, for $N \geq 3$, 
	\begin{enumerate}[label=(\roman*)]
		\item \label{point:estimates-U:1} \begin{align*} 
		%\label{eqn:2.18}
			\normLp{\diff U_{\varepsilon}}{2}^2 = \mathcal{S}^{\frac{N}{2}} + O(\varepsilon^{N-2}) \quad \mbox{and} \quad \normLp{U_{\varepsilon} }{2^*}^{2^*} = \mathcal{S}^{\frac{N}{2}} + O(\varepsilon^N),
		\end{align*}
		\item \label{point:estimates-U:3} For some positive constant $K >0$,
		\begin{align*}  
			\normLp{U_{\varepsilon} }{q}^{q} = \displaystyle
			\begin{cases} 
			\displaystyle	K \var^{N - \frac{(N-2)}{2}q} + o(\var^{N - \frac{(N-2)}{2}q}) \quad &\mbox{if} \quad \dfrac{N}{N-2} < q < 2^*, \\
			\displaystyle	\omega \var^{\frac{N}{2}} |\log \var| + O(\var^{\frac{N}{2}}) \quad &\mbox{if} \quad q = \dfrac{N}{N-2}, \\
			\displaystyle	\omega \Big(\int_0^2 \frac{\xi^q(r)}{r^{(N-2)q - (N-1)}} dr \Big) \var^{\frac{N-2}{2}q} + o(\var^{\frac{N-2}{2}q}) \quad &\mbox{if} \quad 1 \leq q < \dfrac{N}{N-2}.	
			\end{cases}
		\end{align*}
	\end{enumerate}
\end{lemma}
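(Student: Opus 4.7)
My plan is to treat this as a standard Brezis-Nirenberg type asymptotic computation, exploiting the exact scaling of the Aubin-Talenti functions $u_\varepsilon$ and carefully estimating the error introduced by the cutoff $\xi$. The key structural observation is that $u_\varepsilon(x) = \varepsilon^{-(N-2)/2} u_1(x/\varepsilon)$, so after the change of variable $y = x/\varepsilon$ every integral reduces to an integral against the fixed profile $u_1$ on the expanding domain $B_{2/\varepsilon}$. I would first record the pointwise bounds $u_\varepsilon(x) \lesssim \varepsilon^{(N-2)/2}/|x|^{N-2}$ and $|\nabla u_\varepsilon(x)| \lesssim \varepsilon^{(N-2)/2}/|x|^{N-1}$ valid for $|x|\geq \varepsilon$, since these will control everything coming from the annulus $1 \leq |x| \leq 2$ where $\xi$ is not constant.

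For part \ref{point:estimates-U:1}, expand $\nabla U_\varepsilon = \xi\nabla u_\varepsilon + u_\varepsilon \nabla\xi$ and split into the three regions $B_1$, $B_2\setminus B_1$, and $\R^N\setminus B_2$. On $B_1$, $\xi\equiv 1$ and $\nabla\xi\equiv 0$, so the contribution is $\int_{B_1}|\nabla u_\varepsilon|^2\,dx = \|\nabla u_\varepsilon\|_2^2 - \int_{|x|\geq 1}|\nabla u_\varepsilon|^2\,dx = \mathcal{S}^{N/2} + O(\varepsilon^{N-2})$, using that $u_\varepsilon$ is an exact extremal for $\mathcal{S}$ and the pointwise gradient bound above. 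The annulus contributes $O(\varepsilon^{N-2})$ since each of $|\nabla u_\varepsilon|^2$, $u_\varepsilon|\nabla u_\varepsilon|$, and $u_\varepsilon^2$ is $O(\varepsilon^{N-2})$ there (using also $N-2<N$ so the $u_\varepsilon^2$ term is even smaller). Outside $B_2$ the integrand vanishes. The identical splitting for $\|U_\varepsilon\|_{2^*}^{2^*}$ yields the sharper $O(\varepsilon^N)$ remainder, since $u_\varepsilon^{2^*} \lesssim \varepsilon^N/|x|^{2N}$ on $|x|\geq 1$.

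For part \ref{point:estimates-U:3}, I rescale directly: since $\xi$ is radial,
\begin{equation*}
\|U_\varepsilon\|_q^q = \omega\, [N(N-2)]^{(N-2)q/4} \,\varepsilon^{N-(N-2)q/2} \int_0^{2/\varepsilon} \frac{\xi^q(\varepsilon r)\, r^{N-1}}{(1+r^2)^{(N-2)q/2}}\,dr.
\end{equation*}
The behavior of the integral as $\varepsilon\to 0$ is governed by the exponent $\alpha := (N-2)q - (N-1)$ of $r$ at infinity. If $\alpha > 1$, i.e.\ $q > N/(N-2)$, the integral converges and $\xi^q(\varepsilon r)\to 1$ pointwise, so by dominated convergence it tends to the finite constant $\int_0^\infty r^{N-1}/(1+r^2)^{(N-2)q/2}\,dr$, producing the first case with an explicit $K$. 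If $\alpha = 1$, i.e.\ $q = N/(N-2)$, the integral equals $\log(2/\varepsilon) + O(1)$, which upon multiplication by $\varepsilon^{N-(N-2)q/2} = \varepsilon^{N/2}$ gives the logarithmic case. If $\alpha < 1$, i.e.\ $q < N/(N-2)$, the integral diverges like $\int_0^{2/\varepsilon} r^{N-1-(N-2)q}\xi^q(\varepsilon r)\,dr$; substituting back $s = \varepsilon r$ inside this dominant piece yields $\omega \varepsilon^{(N-2)q/2 - N} \int_0^{2} \xi^q(s)/s^{(N-2)q-(N-1)}\,ds$, reproducing the stated prefactor after multiplication by $\varepsilon^{N-(N-2)q/2}$.

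I do not foresee a genuine obstacle; the only care needed is to check that the contribution of the annulus $1\leq |x|\leq 2$ to the gradient and $L^{2^*}$ norms is absorbed into the stated error terms, and that in the third subcase of part \ref{point:estimates-U:3} the subdominant piece of the integral (where $\xi$ plays no role) is indeed $o(\varepsilon^{(N-2)q/2})$ relative to the main term. Both checks reduce to one-variable asymptotics for the integrals of $r^{N-1}/(1+r^2)^{(N-2)q/2}$ that are elementary.
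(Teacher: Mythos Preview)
Your proposal is correct and follows essentially the same standard Brezis--Nirenberg computation as the paper. For part~\ref{point:estimates-U:1} the paper simply cites Struwe, and your region-splitting argument is exactly the classical one. For part~\ref{point:estimates-U:3} both you and the paper reduce to the one-variable integral and distinguish the three regimes according to the convergence of $\int r^{N-1}(1+r^2)^{-(N-2)q/2}\,dr$ at infinity.

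The only organizational difference worth noting is in the third subcase $q<\frac{N}{N-2}$. You rescale first, extract the leading power $r^{N-1-(N-2)q}$ from the integrand, and then substitute back $s=\varepsilon r$ to recover the explicit constant---which effectively undoes the rescaling and leaves you with a remainder to estimate separately. The paper instead stays with the unrescaled form
\[
\omega\,\varepsilon^{\frac{N-2}{2}q}\int_0^2 \frac{\xi^q(r)\,r^{N-1}}{(\varepsilon^2+r^2)^{\frac{N-2}{2}q}}\,dr
\]
and applies dominated convergence directly (the integrand is bounded by $\xi^q(r)/r^{(N-2)q-(N-1)}\in L^1([0,2])$), which gives the limit and the $o(\varepsilon^{(N-2)q/2})$ error in one stroke. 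Your route works, but the paper's is shorter for this case.
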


\begin{proof}
	The point \ref{point:estimates-U:1} is standard. See, for example, \cite[pages 163-164]{Struwe}. We shall thus concentrate on point \ref{point:estimates-U:3}. We have
	\begin{align*}
		\normLp{U_{\varepsilon} }{q}^{q} = \int_{\R^N} \xi^q (x) \Big( \frac{\var}{\var^2 + |x|^2} \Big)^{\frac{N-2}{2}q} dx.
	\end{align*}
	Passing to radial coordinates, we get
	\begin{equation}\label{A1}
		\normLp{U_{\varepsilon} }{q}^{q} = \omega \, \var^{\frac{N-2}{2}q} \int_0^2 \frac{\xi^q(r) r^{N-1}}{(\var^2 + r^2)^{\frac{N-2}{2}q}} \, dr
	\end{equation}
	that can be decomposed as
	\begin{equation}\label{A2}
		\normLp{U_{\varepsilon} }{q}^{q} = \omega \var^{\frac{N-2}{2}q} \int_0^2 \frac{(\xi^q(r)-1) r^{N-1}}{(\var^2 + r^2)^{\frac{N-2}{2}q}} dr + \omega \var^{\frac{N-2}{2}q} \int_0^2 \frac{ r^{N-1}}{(\var^2 + r^2)^{\frac{N-2}{2}q}} dr := I_1(\var) + I_2(\var).
	\end{equation}
	Since $\xi(r) \equiv 1$ on $[0,1]$, the integral in $I_1(\var)$ is converging and thus $I_1(\var)= O(\var^{\frac{N-2}{2}q}).$ Now, by making a change of variable, we rewrite $I_2(\var)$ as 
	\begin{align*}
		I_2(\var) = \omega \, \var^{N -\frac{N-2}{2}q} \int_0^{\frac{2}{\var}} \frac{ r^{N-1}}{(1 + r^2)^{\frac{N-2}{2}q}} \, dr.
	\end{align*}
	The integral in  $I_2(\var)$  is converging, as $\var \to 0$, to a finite value if and only if $q > \frac{N}{N-2}$. Thus, when $q > \frac{N}{N-2}$, we have that, for some constant $K >0$,
	\begin{align*}
		I_2(\var)= K \var^{N -\frac{N-2}{2}q}  + o(\var^{N -\frac{N-2}{2}q})
	\end{align*}
	and  recording that $I_1(\var)= O(\var^{\frac{N-2}{2}q})$, we have
	\begin{align*}
		\normLp{U_{\varepsilon} }{q}^{q} = I_1(\var) + I_2(\var) = K \var^{N -\frac{N-2}{2}q}  + o(\var^{N -\frac{N-2}{2}q}).
	\end{align*}
	This proves point \ref{point:estimates-U:3} for $\dfrac{N}{N-2} < q < 2^*$.
	Now, assuming that $q = \dfrac{N}{N-2}$, and proceeding as in \eqref{A2}, we get that
	\begin{equation}\label{A3}
		||U_{\varepsilon}||_{L^q(\R^N)}^{q} = \omega \, \var^{\frac{N}{2}} \int_0^2 \frac{(\xi^q(r)-1) r^{N-1}}{(\var^2 + r^2)^{\frac{N}{2}}} \, dr + \omega \var^{\frac{N}{2}} \int_0^2 \frac{ r^{N-1}}{(\var^2 + r^2)^{\frac{N}{2}}} \, dr := I_1(\var) + I_2(\var)
	\end{equation}
	with $I_1(\var)= O(\var^{\frac{N}{2}} )$. Also,
	\begin{align*}
		I_2(\var) = \omega \var^{\frac{N}{2}} \int_0^{\frac{2}{\var}} \frac{ r^{N-1}}{(1 + r^2)^{\frac{N}{2}}} dr = \omega \var^{\frac{N}{2}} \big( |\log \var| + O(1) \big).
	\end{align*}
	Summarizing, we obtain for $q = \dfrac{N}{N-2}$ that
	\begin{align*}
		\normLp{U_{\varepsilon} }{q}^{q} = \omega \var^{\frac{N}{2}} |\log \var| + O(\var^{\frac{N}{2}}).
	\end{align*}
	It remains to study the case $1 \leq q < \dfrac{N}{N-2}$. Under this assumption, we observe that, for all $r>0$,
	\begin{align*}
		\lim_{\var \to 0} \frac{\xi^q(r) r^{N-1}}{(\var^2 + r^2)^{\frac{N-2}{2}q}} = \frac{\xi^q(r) r^{N-1}}{r^{(N-2)q}} = \frac{\xi^q(r)}{r^{(N-2)q - (N-1)}}
	\end{align*}
	and also that, for all $\var >0$, for some constant $D >0$
	\begin{align*}
		\Big| \frac{\xi^q(r) r^{N-1}}{(\var^2 + r^2)^{\frac{N-2}{2}q}} \Big| \leq \frac{D}{r^{(N-2)q - (N-1)}} \in L^1([0,2]).
	\end{align*}
	Thus, from Lebesgue's theorem, we deduce from \eqref{A1} that
	\begin{align*}
		\normLp{U_{\varepsilon} }{q}^{q} = \omega \Big(\int_0^2 \frac{\xi^q(r)}{r^{(N-2)q - (N-1)}} dr \Big) \var^{\frac{N-2}{2}q} + o(\var^{\frac{N-2}{2}q}).
	\end{align*}
	This ends the proof of the lemma.
\end{proof}

%%%%%%%%%%%%%%%%%%%%%%%%%%%%%%%%%%%%%%%%%%%%%%%%%%%%%%%%%%%%%%%%%%%%%%%%%%%%%%%%%%%%%%%

\begin{remark}
	Even if we do not manage to prove that the strict inequality of \cref{prop:5} holds when $N=3$ we have include the treatment of this case in \cref{estimates-U}, hoping that these estimates will be proved useful at some moment.
\end{remark}

\section*{Declarations}
\label{conflict-interest}
\noindent \textbf{Conflict of Interest.} On behalf of all authors, the corresponding author states that there is no conflict of interest.
\smallbreak
\noindent \textbf{Data Availability Statement.} This article has no additional data.

\renewcommand{\bibname}{References}
\bibliographystyle{plain}
\bibliography{References}

\begin{thebibliography}{10}

\bibitem{AkahoriIbrahimKikuchiNawa2013}
Takafumi Akahori, Slim Ibrahim, Hiroaki Kikuchi, and Hayato Nawa.
\newblock Existence of a ground state and scattering for a nonlinear
  {S}chr\"{o}dinger equation with critical growth.
\newblock {\em Selecta Math. (N.S.)}, 19(2):545--609, 2013.

\bibitem{AkahoriIbrahimKikuchiNawa2021}
Takafumi Akahori, Slim Ibrahim, Hiroaki Kikuchi, and Hayato Nawa.
\newblock Global dynamics above the ground state energy for the combined power
  type nonlinear {S}chr\"{o}dinger equations with energy critical growth at low
  frequencies.
\newblock {\em arXiv.1510.08034}, 2019.

\bibitem{AlmgrenLieb-1}
Frederick~J. Almgren, Jr. and Elliott~H. Lieb.
\newblock Symmetric decreasing rearrangement can be discontinuous.
\newblock {\em Bull. Amer. Math. Soc. (N.S.)}, 20(2):177--180, 1989.

\bibitem{AlmgrenLieb-2}
Frederick~J. Almgren, Jr. and Elliott~H. Lieb.
\newblock Symmetric decreasing rearrangement is sometimes continuous.
\newblock {\em J. Amer. Math. Soc.}, 2(4):683--773, 1989.

\bibitem{AlvesSoutoMontenegro2012}
Claudianor~O. Alves, Marco A.~S. Souto, and Marcelo Montenegro.
\newblock Existence of a ground state solution for a nonlinear scalar field
  equation with critical growth.
\newblock {\em Calc. Var. Partial Differential Equations}, 43(3-4):537--554,
  2012.

\bibitem{BartschJeanjeanSoave16}
Thomas Bartsch, Louis Jeanjean, and Nicola Soave.
\newblock Normalized solutions for a system of coupled cubic {S}chr\"{o}dinger
  equations on {$\Bbb{R}^3$}.
\newblock {\em J. Math. Pures Appl. (9)}, 106(4):583--614, 2016.

\bibitem{BartschSoave2019}
Thomas Bartsch and Nicola Soave.
\newblock Multiple normalized solutions for a competing system of
  {S}chr\"{o}dinger equations.
\newblock {\em Calc. Var. Partial Differential Equations}, 58(1):Paper No. 22,
  24, 2019.

\bibitem{BellazziniJeanjean2016}
Jacopo Bellazzini and Louis Jeanjean.
\newblock On dipolar quantum gases in the unstable regime.
\newblock {\em SIAM J. Math. Anal.}, 48(3):2028--2058, 2016.

\bibitem{BellazziniJeanjeanLuo2013}
Jacopo Bellazzini, Louis Jeanjean, and Tingjian Luo.
\newblock Existence and instability of standing waves with prescribed norm for
  a class of {S}chr\"{o}dinger-{P}oisson equations.
\newblock {\em Proc. Lond. Math. Soc. (3)}, 107(2):303--339, 2013.

\bibitem{BerestyckiLions1983}
H.~Berestycki and P.-L. Lions.
\newblock Nonlinear scalar field equations. {I}. {E}xistence of a ground state.
\newblock {\em Arch. Rational Mech. Anal.}, 82(4):313--345, 1983.

\bibitem{BerestyckiLions1983-2}
H.~Berestycki and P.-L. Lions.
\newblock Nonlinear scalar field equations. {II}. {E}xistence of infinitely
  many solutions.
\newblock {\em Arch. Rational Mech. Anal.}, 82(4):347--375, 1983.

\bibitem{BerestyckiCazenave1981}
Henri Berestycki and Thierry Cazenave.
\newblock Instabilit\'{e} des \'{e}tats stationnaires dans les \'{e}quations de
  {S}chr\"{o}dinger et de {K}lein-{G}ordon non lin\'{e}aires.
\newblock {\em C. R. Acad. Sci. Paris S\'{e}r. I Math.}, 293(9):489--492, 1981.

\bibitem{Brezis1983}
Ha\"{\i}m Brezis.
\newblock {\em Analyse fonctionnelle}.
\newblock Collection Math\'{e}matiques Appliqu\'{e}es pour la Ma\^{\i}trise.
  [Collection of Applied Mathematics for the Master's Degree]. Masson, Paris,
  1983.
\newblock Th\'{e}orie et applications. [Theory and applications].

\bibitem{BrezisLieb1983}
Ha\"{\i}m Br\'{e}zis and Elliott Lieb.
\newblock A relation between pointwise convergence of functions and convergence
  of functionals.
\newblock {\em Proc. Amer. Math. Soc.}, 88(3):486--490, 1983.

\bibitem{BrezisNirenberg1983}
Ha\"{\i}m Br\'{e}zis and Louis Nirenberg.
\newblock Positive solutions of nonlinear elliptic equations involving critical
  {S}obolev exponents.
\newblock {\em Comm. Pure Appl. Math.}, 36(4):437--477, 1983.

\bibitem{CazenaveLions1982}
Thierry Cazenave and Pierre-Louis Lions.
\newblock Orbital stability of standing waves for some nonlinear
  {S}chr\"{o}dinger equations.
\newblock {\em Comm. Math. Phys.}, 85(4):549--561, 1982.

\bibitem{CingolaniJeanjean2019}
Silvia Cingolani and Louis Jeanjean.
\newblock Stationary waves with prescribed {$L^2$}-norm for the planar
  {S}chr\"{o}dinger-{P}oisson system.
\newblock {\em SIAM J. Math. Anal.}, 51(4):3533--3568, 2019.

\bibitem{ColesGustafson2020}
M.~Coles and S.~Gustafson.
\newblock Solitary waves and dynamics for subcritical perturbations of energy
  critical nls.
\newblock {\em Publications of the Research Institute for Mathematical
  Sciences}, 56(4):647--699, 2020.

\bibitem{FukayaOhta}
Noriyoshi Fukaya and Masahito Ohta.
\newblock Strong instability of standing waves with negative energy for double
  power nonlinear {S}chr\"{o}dinger equations.
\newblock {\em SUT J. Math.}, 54(2):131--143, 2018.

\bibitem{Ghoussoub1993}
Nassif Ghoussoub.
\newblock {\em Duality and perturbation methods in critical point theory},
  volume 107 of {\em Cambridge Tracts in Mathematics}.
\newblock Cambridge University Press, Cambridge, 1993.
\newblock With appendices by David Robinson.

\bibitem{HajaiejStuart2004}
Hichem Hajaiej and Charles~Alexander Stuart.
\newblock On the variational approach to the stability of standing waves for
  the nonlinear {S}chr\"{o}dinger equation.
\newblock {\em Adv. Nonlinear Stud.}, 4(4):469--501, 2004.

\bibitem{JEANJEAN1997}
Louis Jeanjean.
\newblock Existence of solutions with prescribed norm for semilinear elliptic
  equations.
\newblock {\em Nonlinear Anal.}, 28(10):1633--1659, 1997.

\bibitem{JeanjeanJendrejLeVisciglia2020}
Louis Jeanjean, Jacek Jendrej, Thanh~Trung Le, and Nicola Visciglia.
\newblock Orbital stability of ground states for a {S}obolev critical
  {S}chr\"odinger equation.
\newblock {\em arXiv.2008.12084}, 2020.

\bibitem{JeanjeanLu-nonlinarity}
Louis Jeanjean and Sheng-Sen Lu.
\newblock Nonradial normalized solutions for nonlinear scalar field equations.
\newblock {\em Nonlinearity}, 32(12):4942--4966, 2019.

\bibitem{KillipOhPocovnicuVisan2017}
Rowan Killip, Tadahiro Oh, Oana Pocovnicu, and Monica Vi\c{s}an.
\newblock Solitons and scattering for the cubic-quintic nonlinear
  {S}chr\"{o}dinger equation on {$\Bbb{R}^3$}.
\newblock {\em Arch. Ration. Mech. Anal.}, 225(1):469--548, 2017.

\bibitem{Lecoz2008}
Stefan Le~Coz.
\newblock A note on {B}erestycki-{C}azenave's classical instability result for
  nonlinear {S}chr\"{o}dinger equations.
\newblock {\em Adv. Nonlinear Stud.}, 8(3):455--463, 2008.

\bibitem{LewinRotaNodari2020}
Mathieu Lewin and Simona Rota~Nodari.
\newblock The double-power nonlinear {S}chr\"{o}dinger equation and its
  generalizations: uniqueness, non-degeneracy and applications.
\newblock {\em Calc. Var. Partial Differential Equations}, 59(6):197, 2020.

\bibitem{LIONS1984-1}
Pierre-Louis Lions.
\newblock The concentration-compactness principle in the calculus of
  variations. {T}he locally compact case. {I}.
\newblock {\em Ann. Inst. H. Poincar\'{e} Anal. Non Lin\'{e}aire},
  1(2):109--145, 1984.

\bibitem{LIONS1984-2}
Pierre-Louis Lions.
\newblock The concentration-compactness principle in the calculus of
  variations. {T}he locally compact case. {II}.
\newblock {\em Ann. Inst. H. Poincar\'{e} Anal. Non Lin\'{e}aire},
  1(4):223--283, 1984.

\bibitem{Nirenberg1959}
Louis Nirenberg.
\newblock On elliptic partial differential equations.
\newblock {\em Ann. Scuola Norm. Sup. Pisa Cl. Sci. (3)}, 13:115--162, 1959.

\bibitem{NorisTavaresVerzini2019}
Benedetta Noris, Hugo Tavares, and Gianmaria Verzini.
\newblock Normalized solutions for nonlinear {S}chr\"{o}dinger systems on
  bounded domains.
\newblock {\em Nonlinearity}, 32(3):1044--1072, 2019.

\bibitem{Soave2020}
Nicola Soave.
\newblock Normalized ground states for the {NLS} equation with combined
  nonlinearities.
\newblock {\em J. Differential Equations}, 269(9):6941--6987, 2020.

\bibitem{Soave2020Sobolevcriticalcase}
Nicola Soave.
\newblock Normalized ground states for the {NLS} equation with combined
  nonlinearities: the {S}obolev critical case.
\newblock {\em J. Funct. Anal.}, 279(6):108610, 43, 2020.

\bibitem{Stefanov19}
Atanas Stefanov.
\newblock On the normalized ground states of second order {PDE}'s with mixed
  power non-linearities.
\newblock {\em Comm. Math. Phys.}, 369(3):929--971, 2019.

\bibitem{Struwe}
Michael Struwe.
\newblock {\em Variational methods}.
\newblock Springer-Verlag, Berlin, 1990.
\newblock Applications to nonlinear partial differential equations and
  Hamiltonian systems.

\bibitem{TaoVisanZhang07}
Terence Tao, Monica Visan, and Xiaoyi Zhang.
\newblock The nonlinear {S}chr\"{o}dinger equation with combined power-type
  nonlinearities.
\newblock {\em Comm. Partial Differential Equations}, 32(7-9):1281--1343, 2007.

\bibitem{Tarantello92}
G.~Tarantello.
\newblock On nonhomogeneous elliptic equations involving critical {S}obolev
  exponent.
\newblock {\em Ann. Inst. H. Poincar\'{e} Anal. Non Lin\'{e}aire},
  9(3):281--304, 1992.

\bibitem{Wei-Wu2021}
Juncheng Wei and Yuanze Wu.
\newblock Normalized solutions for {S}chr\"odinger equations with critical
  sobolev exponent and mixed nonlinearities.
\newblock {\em arXiv.2102.04030}, 2021.

\end{thebibliography}
\vspace{0.25cm}
\end{document}